\numberwithin{equation}{section}
\newtheorem {proposition}{Proposition}
\newtheorem{theorem}{Theorem}
\newtheorem {corollary}{Corollary}
\newtheorem {lemma}{Lemma}
\newtheorem {remark}{Remark}
\newtheorem*{definition}{Definition}
\newcommand{\ds}{\displaystyle}
\begin{document}
\title{Qualitative properties of solutions for an integral system related to the Hardy--Sobolev inequality}

\author{John Villavert\footnote{email: john.villavert@gmail.com, villavert@math.ou.edu} \\
[0.2cm] {\small Department of Mathematics, University of Oklahoma}\\
{\small Norman, OK 73019, USA}
 }
\date{}
\maketitle
\begin{abstract}
This article carries out a qualitative analysis on a system of integral equations of the Hardy--Sobolev type. Namely, results concerning Liouville type properties and the fast and slow decay rates of positive solutions for the system are established. For a bounded and decaying positive solution, it is shown that it either decays with the slow rates or the fast rates depending on its integrability. Particularly, a criterion for distinguishing integrable solutions from other bounded and decaying solutions in terms of their asymptotic behavior is provided. Moreover, related results on the optimal integrability, boundedness, radial symmetry and monotonicity of positive integrable solutions are also established. As a result of the equivalence between the integral system and a system of poly-harmonic equations under appropriate conditions, the results translate over to the corresponding poly-harmonic system. Hence, several classical results on semilinear elliptic systems are recovered and further generalized.
\end{abstract}

{\footnotesize \noindent{\bf MSC:} Primary: 35B40, 35B53, 45G15, 45M05; Secondary: 35J91.} \medskip

 {\footnotesize \noindent{\bf Keywords:}\, Lane--Emden equations; Hardy--Sobolev inequality; weighted Hardy--Littlewood--Sobolev inequality; singular integral equations; poly-harmonic equations.}
 
\maketitle
\section{Introduction and the main results}
In this paper, we study the qualitative properties of positive solutions for an integral system of the Hardy--Sobolev type. In particular, we consider the system of integral equations involving Riesz potentials and Hardy terms,
\begin{equation}\label{whls ie}
  \left\{\begin{array}{cl}
    u(x) = \ds\int_{\mathbb{R}^{n}} \ds\frac{v(y)^q}{|x-y|^{n-\alpha}|y|^{\sigma_1}}\,dy, \medskip \\ 
    v(x) = \ds\int_{\mathbb{R}^{n}} \ds\frac{u(y)^p}{|x-y|^{n-\alpha}|y|^{\sigma_2}}\,dy, \\
  \end{array}
\right.
\end{equation}
where throughout we assume that $n\geq 3$, $p,q > 0$ with $pq > 1$, $\alpha \in (0,n)$ and $\sigma_1,\sigma_2 \in [0,\alpha)$. As a result, we establish analogous properties for the closely related system of semilinear differential equations with singular weights,
\begin{equation}\label{whls pde}
  \left\{\begin{array}{cl}
    (-\Delta)^{\alpha/2} u(x) = \ds\frac{v(x)^q}{|x|^{\sigma_1}} & 	\text{ in } \mathbb{R}^n\backslash\{0\}, \medskip \\ 
    (-\Delta)^{\alpha/2} v(x) = \ds\frac{u(x)^p}{|x|^{\sigma_2}} & 	\text{ in } \mathbb{R}^n\backslash\{0\},
  \end{array}
\right.
\end{equation}
since both systems are equivalent under appropriate conditions. Particularly, if $p,q > 1$ and $\alpha = 2k$ is an even positive integer, then a positive classical solution $u,v \in C^{2k}(\mathbb{R}^n\backslash\{0\})\cap C(\mathbb{R}^n)$ of system \eqref{whls ie}, multiplied by suitable constants if necessary, satisfies the poly-harmonic system \eqref{whls pde} pointwise except at the origin; and vice versa (cf. \cite{CL13,Villavert:14c,WeiXu99}).

Our aim is to fully characterize the positive solutions, specifically the ground states, in terms of their asymptotic behavior and elucidate its connection with Liouville type non-existence results. The motivation for studying these properties for the Hardy--Sobolev type systems arises from several related and well-known problems. For example, one problem originates from the doubly weighted Hardy--Littlewood--Sobolev (HLS) inequality \cite{SW58}, which states that for $r,s \in (1,\infty)$, $\alpha \in (0,n)$ and $0 \leq \sigma_1 + \sigma_2 \leq \alpha$,
$$ \Big| \int_{\mathbb{R}^n}\int_{\mathbb{R}^n} \frac{f(x)g(y)}{|x|^{\sigma_1}|x-y|^{n-\alpha}|y|^{\sigma_2}} \,dxdy \Big| \leq C_{\sigma_{i},s,\alpha,n}\|f\|_{L^{r}(\mathbb{R}^n)}\|g\|_{L^{s}(\mathbb{R}^n)},$$
where 
$$ \frac{\alpha}{n} - \frac{1}{r} < \frac{\sigma_1}{n} < 1 - \frac{1}{r} \,\text{ and }\, \frac{1}{r} + \frac{1}{s} + \frac{\sigma_1 + \sigma_2}{n} = \frac{n+\alpha}{n}. $$
Here and throughout this paper, $\|f\|_{L^{p}(\mathbb{R}^n)}$ or $\|f\|_{p}$ denotes the norm of $f$ in the Lebesgue space $L^{p}(\mathbb{R}^n)$. To find the best constant in the doubly weighted HLS inequality, we maximize the functional
\begin{equation}\label{J functional}
J(f,g) = \int_{\mathbb{R}^n}\int_{\mathbb{R}^n} \frac{f(x)g(y)}{|x|^{\sigma_1}|x-y|^{n-\alpha}|y|^{\sigma_2}} \,dxdy
\end{equation}
under non-negative functions $f$ and $g$ with the constraints 
$$\|f\|_{L^{r}(\mathbb{R}^n)} = \|g\|_{L^{s}(\mathbb{R}^n)} = 1. $$ 
Setting $u= c_{1}f^{r-1}$ and $v = c_{2}g^{s-1}$ with proper choices for the constants $c_1$ and $c_2$ and taking $\frac{1}{p+1} = 1 - \frac{1}{r}$ and $\frac{1}{q+1} = 1 - \frac{1}{s}$ with $pq \neq 1$, the corresponding Euler--Lagrange equations for the extremal functions of the functional are equivalent to the so-called weighted HLS integral system
\begin{equation}\label{whls el}
  \left\{\begin{array}{cl}
    u(x) = \ds\frac{1}{|x|^{\sigma_1}}\int_{\mathbb{R}^{n}} \frac{v(y)^q}{|x-y|^{n-\alpha}|y|^{\sigma_2}}\,dy, \medskip \\ 
    v(x) = \ds\frac{1}{|x|^{\sigma_2}} \int_{\mathbb{R}^{n}} \frac{u(y)^p}{|x-y|^{n-\alpha}|y|^{\sigma_1}}\,dy, \\
  \end{array}
\right.
\end{equation}
where $$ \frac{\sigma_1}{n} < \frac{1}{p+1} < \frac{n-\alpha + \sigma_1}{n} \,\text{ and }\, \frac{1}{1+q}+\frac{1}{1+p} = \frac{n-\alpha+\sigma_1 + \sigma_2}{n}.$$
Notice that \eqref{whls ie} and \eqref{whls el} coincide if $\sigma_1 = \sigma_2 = 0$. Now when $\sigma_1 = \sigma_2 = 0$ and $r=s=\frac{2n}{n+\alpha}$, Lieb classified all maximizers of the functional \eqref{J functional} and posed the classification of all the critical points as an open problem in \cite{Lieb83}, which was later settled by Chen, Li and Ou in \cite{CLO06}.

If we take $\sigma_1 = \sigma_2 \doteq \sigma$, $p=q$ and $u\equiv v$, system \eqref{whls ie} becomes the integral equation
\begin{equation}\label{integral equation}
u(x) = \int_{\mathbb{R}^n} \frac{u(y)^p}{|x-y|^{n-\alpha}|y|^{\sigma}} \,dy.
\end{equation}
In the special case where $\alpha = 2$ and $p=\frac{n+\alpha-2\sigma}{n-\alpha}$, \eqref{integral equation} is closely related to the Euler--Lagrange equation for the extremal functions of the classical Hardy--Sobolev inequality, which states there exists a constant $C$ for which
\begin{equation*}
\Big( \int_{\mathbb{R}^n} \frac{u(x)^{\frac{2(n-\sigma)}{n-2}}}{|x|^{\sigma}} \Big)^{\frac{n-2}{n-\sigma}} \leq C\int_{\mathbb{R}^n} |\nabla u(x)|^2 \,dx \,\text{ for all }\, u \in \mathcal{D}^{1,2}(\mathbb{R}^n).
\end{equation*}
In fact, the Hardy--Sobolev inequality is a special case of the Caffarelli--Kohn--Nirenberg inequality (cf. \cite{BadialeTarantello02,CKN84,CW01,ChouChu93}). Furthermore, the classification of solutions for the unweighted integral equation and its corresponding differential equation provide an important ingredient in the Yamabe and prescribing scalar curvature problems. 

Another noteworthy and related issue concerns Liouville type theorems. Such non-existence results are important in deriving singularity estimates and a priori bounds for solutions of Dirichlet problems for a class of elliptic equations (cf. \cite{GS81apriori,PQS07}). The H\'{e}non--Lane--Emden system, which coincides with \eqref{whls pde} when $\alpha = 2$ and $\sigma_1,\sigma_2 \in (-\infty,2)$, has garnered some recent attention with respect to the H\'{enon}--Lane--Emden conjecture, which states that the system admits no positive classical solution in the subcritical case
$$ \frac{n-\sigma_1}{1+q} + \frac{n-\sigma_2}{1+p} > n-2.$$
In \cite{Phan12}, Phan proved the conjecture for $n=3$ in the class of bounded solutions and for $n=3,4$ provided that $\sigma_1,\sigma_2 \geq 0$ (see also \cite{FG14}). For the higher dimensional case or the general poly-harmonic version, this conjecture has partial results (cf. \cite{Fazly14,Phan12,Villavert:14c} and the references therein). In \cite{Villavert:14b}, the author verified that the poly-harmonic version is indeed sharp by establishing the existence of positive solutions for \eqref{whls pde} in the non-subcritical case (see also \cite{Li13,LGZ06a}). Even in the unweighted case (i.e. $\sigma_1 = \sigma_2 = 0$) with $\alpha = 2$, the conjecture, more commonly known as the Lane--Emden conjecture, still has only partial results. Specifically, it is true for radial solutions \cite{Mitidieri96} and for $n\leq 4$ \cite{PQS07,SZ96,Souplet09} (cf. \cite{AYZ14,LGZ06} for the poly-harmonic case).

In \cite{YLi92} and \cite{LiNi88}, the authors examined the decay properties of positive solutions for the Lane--Emden equation,
\begin{equation*}
-\Delta u(x) = u(x)^p \,\text{ in }\, \mathbb{R}^n.
\end{equation*}
It was shown that solutions decay to zero at infinity with either the fast rate $u(x) \simeq |x|^{-(n-2)}$ or with the slow rate $u(x) \simeq |x|^{-\frac{2}{p-1}}$, where the notation $f(x) \simeq g(x)$ means there exist positive constants $c$ and $C$ such that $cg(x) \leq f(x) \leq Cg(x)$ as $|x|\longrightarrow \infty$. Analogous studies on the asymptotic properties of solutions for the weighted integral equation \eqref{integral equation} can be found in \cite{Lei13} and for the unweighted version of system \eqref{whls ie} in \cite{LL13a}. In addition, results on the regularity of solutions for these equations and systems can be found in various papers (cf. \cite{CL05,CLO05a,Hang07,YYLi04,LuZhu11}). 

We are now ready to describe the main results of this paper. Henceforth, we define
\begin{equation*}
p_0 \doteq \frac{\alpha(1+p) - (\sigma_2 + \sigma_1 p)}{pq-1} \,\text{ and }\, q_0 \doteq \frac{\alpha(1+q) - (\sigma_1 + \sigma_2 q)}{pq-1}.
\end{equation*}
We also define some notions of solutions for the Hardy--Sobolev type system including the integrable solutions. 
\begin{definition}
Let $u,v$ be positive solutions of \eqref{whls ie}. Then $u,v$ are said to be:
\begin{enumerate}[(i)]
\item \textbf{decaying} solutions if $u(x) \simeq |x|^{-\theta_1}$ and $v(x) \simeq |x|^{-\theta_2}$ for some rates $\theta_{1},\theta_{2} > 0$;
\item \textbf{integrable} solutions if $u \in L^{r_0}(\mathbb{R}^n)$ and $v \in L^{s_0}(\mathbb{R}^n)$
where
\begin{equation*}
r_0 = \frac{n}{q_0} ~\,\text{ and }~\, s_0 = \frac{n}{p_0}.
\end{equation*}
\end{enumerate}
\end{definition}

\begin{definition}
Let $u,v$ be positive solutions of \eqref{whls ie}. Then $u,v$ are said to decay with the \textbf{slow rates} as $|x| \longrightarrow \infty$ if $u(x) \simeq |x|^{-q_0}$ and $v(x) \simeq |x|^{-p_0}$.
Suppose $q\geq p$ and $\sigma_1 \geq \sigma_2$. Then $u,v$ are said to decay with the \textbf{fast rates} as $|x| \longrightarrow \infty$ if 
$$ u(x) \simeq |x|^{-(n-\alpha)} $$
and
\begin{equation*}
\left\{\begin{array}{ll}
v(x) \simeq |x|^{-(n-\alpha)}, 							& \text{ if }\, p(n-\alpha) + \sigma_2 > n; \\
v(x) \simeq |x|^{-(n-\alpha)}\ln |x|, 					& \text{ if }\, p(n-\alpha) + \sigma_2 = n; \\
v(x) \simeq |x|^{-(p(n-\alpha) - (\alpha-\sigma_2))}, 	& \text{ if }\, p(n-\alpha) + \sigma_2 < n.
\end{array}
\right.
\end{equation*}
\end{definition}

\begin{remark}
The conditions on the parameters in the previous definition, i.e. $q \geq p$ and $\sigma_1 \geq \sigma_2$, which we will sometimes assume within our main theorems, are not so essential. Namely, the results still remain true if we interchange the parameters provided that $u$ and $v$ are interchanged accordingly in the definition and the theorems.
\end{remark}

\begin{remark}
In view of the equivalence with poly-harmonic systems and the regularity theory indicated by the earlier references, \textbf{classical solutions} of \eqref{whls ie} should be understood to mean solutions belonging to $C^{\lfloor \alpha \rfloor}(\mathbb{R}^n \backslash \{0\})\cap C(\mathbb{R}^n)$, where $\lfloor\, \cdot \,\rfloor$ is the greatest integer function.
\end{remark}

\subsection{Main results}

\begin{theorem}\label{slow decay} There hold the following.

\begin{enumerate}[(i)]
\item If $u,v$ are bounded and decaying solutions of \eqref{whls ie}, then there exists a positive constant $C$ such that as $|x|\longrightarrow \infty$,
$$u(x) \leq C|x|^{-q_0} \,\text{ and }\, v(x) \leq C|x|^{-p_0}.$$

\item Suppose $q\geq p$ and $\sigma_1\geq \sigma_2$ (so that $q_0 \geq p_0$), and let $u,v$ be positive solutions of \eqref{whls ie}. Then there exists a positive constant $c$ such that as $|x|\longrightarrow \infty$,
$$u(x) \geq \frac{c}{|x|^{n-\alpha}} \,\text{ and }\, v(x) \geq \frac{c}{|x|^{\min\{ n-\alpha, p(n-\alpha) - (\alpha - \sigma_2)\}}}.$$
\end{enumerate}
\end{theorem}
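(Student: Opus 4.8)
The plan is to reduce both parts to a single asymptotic estimate for the Riesz-type kernel and then read off the rates. The workhorse is the elementary estimate that, for $\beta>0$ and $|x|\to\infty$,
\begin{equation*}
\ds\int_{\mathbb{R}^n}\frac{dy}{|x-y|^{n-\alpha}\,|y|^{\beta}} \simeq
\begin{cases}
|x|^{\alpha-\beta}, & \alpha<\beta<n,\\
|x|^{-(n-\alpha)}\ln|x|, & \beta=n,\\
|x|^{-(n-\alpha)}, & \beta>n,
\end{cases}
\end{equation*}
which follows by the rescaling $y=|x|z$ together with splitting $\mathbb{R}^n$ into the regions $\{|y|\le \tfrac12|x|\}$, $\{|y-x|\le\tfrac12|x|\}$ and the complementary far region (the case $\beta\le\alpha$ being excluded since it forces divergence). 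I would first prove this in a form that records two-sided bounds when the density is only comparable to $|y|^{-\beta}$ away from the origin, and that isolates the contribution of a compact neighborhood of the origin, where boundedness of $u,v$ keeps everything finite.

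For part (ii) I would argue directly, this being the cleaner half. Restricting the defining integral for $u$ to the unit ball and using $|x-y|\le 2|x|$ there gives, for $|x|$ large,
\begin{equation*}
u(x)\ge \ds\frac{c}{|x|^{n-\alpha}}\int_{|y|\le 1}\frac{v(y)^q}{|y|^{\sigma_1}}\,dy = \frac{c'}{|x|^{n-\alpha}},
\end{equation*}
and symmetrically $v(x)\ge c'|x|^{-(n-\alpha)}$; the constant is positive because the integral of the positive density over the unit ball is positive and finite. Feeding $u(y)\ge c|y|^{-(n-\alpha)}$ (valid for $|y|\ge R_0$) back into the equation for $v$ yields
\begin{equation*}
v(x)\ge c\ds\int_{|y|\ge R_0}\frac{dy}{|x-y|^{n-\alpha}\,|y|^{p(n-\alpha)+\sigma_2}},
\end{equation*}
and the estimate with $\beta=p(n-\alpha)+\sigma_2$ produces exactly $v(x)\ge c\,|x|^{-\min\{n-\alpha,\;p(n-\alpha)-(\alpha-\sigma_2)\}}$, the three regimes of the estimate matching the three cases according as $p(n-\alpha)+\sigma_2$ is greater than, equal to, or less than $n$. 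The hypotheses $q\ge p$, $\sigma_1\ge\sigma_2$ are used only to fix which component is the slower one, so no generality is lost.

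For part (i) the two-sided decay hypothesis is essential. Writing $u\simeq|x|^{-\theta_1}$, $v\simeq|x|^{-\theta_2}$ with genuine exponents $\theta_1,\theta_2>0$, I would insert these into the integral equations and apply the estimate (boundedness of $u,v$ guaranteeing that the compact part near the origin only contributes at the harmless rate $|x|^{-(n-\alpha)}$). Because the bounds are two-sided, the exponents are pinned exactly by the coupled relations
\begin{equation*}
\theta_1=\min\{n-\alpha,\;q\theta_2+\sigma_1-\alpha\},\qquad
\theta_2=\min\{n-\alpha,\;p\theta_1+\sigma_2-\alpha\}.
\end{equation*}
I would then solve this $2\times2$ min-system by cases: if both minima are interior, the unique solution of the resulting linear system is precisely $(\theta_1,\theta_2)=(q_0,p_0)$ (this is the computation defining $q_0,p_0$), while any case in which a minimum equals $n-\alpha$ forces the corresponding exponent up to $n-\alpha$ and, since $q_0,p_0\le n-\alpha$ in the operative parameter range, pushes the other exponent to a value $\ge q_0$, resp.\ $\ge p_0$. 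In every case $\theta_1\ge q_0$ and $\theta_2\ge p_0$, which is the asserted bound $u(x)\le C|x|^{-q_0}$, $v(x)\le C|x|^{-p_0}$.

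The main obstacle is the asymptotic estimate itself together with the attendant bookkeeping: obtaining sharp two-sided constants (not merely one-sided bounds) uniformly in the direction of $x$, controlling the borderline logarithmic case $\beta=n$, and verifying that the near-origin contributions are genuinely lower order. For part (i) the additional delicate point is justifying that the decay exponents satisfy the min-system \emph{with equality}; this is exactly where the hypothesis that the solutions are honestly comparable to pure powers (not merely bounded above) is indispensable, since from one-sided upper bounds alone the coupled inequalities run in the wrong direction and yield only $\theta_1\le q_0$.
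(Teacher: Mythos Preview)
Your argument for part~(ii) is correct and essentially matches the paper's: restrict to the unit ball for the initial $|x|^{-(n-\alpha)}$ lower bound, then bootstrap into the equation for $v$. The paper carries out the second step by integrating directly over $B_{|x|/2}(x)$, where $|y|\simeq|x|$, rather than invoking the general Riesz--potential asymptotic you state, but the content is the same.

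For part~(i) your route is correct but genuinely different from the paper's, and more elaborate. You compute the full two-sided asymptotics of each integral and solve the coupled $\min$-system for $(\theta_1,\theta_2)$. The paper instead extracts a single crude pointwise inequality: integrating only over the annulus $B_{|x|}(0)\setminus B_R(0)$, the two-sided decay hypothesis gives $v(y)\ge c\,v(x)$ for $y$ there, whence
\[
u(x)\ge C\,v(x)^q\,|x|^{\alpha-\sigma_1},\qquad v(x)\ge C\,u(x)^p\,|x|^{\alpha-\sigma_2},
\]
and substituting one into the other yields $u(x)^{pq-1}\le C\,|x|^{-(pq-1)q_0}$, hence $u(x)\le C|x|^{-q_0}$, with no case analysis and no asymptotic lemma at all. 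This also sidesteps the one soft spot in your sketch: your case analysis of the $\min$-system needs $q_0,p_0\le n-\alpha$, which you assert holds ``in the operative parameter range'' without proof. It is true whenever positive solutions exist---this is the content of the paper's Theorem~A, proved elsewhere---but as written your argument leans on an external fact you have not established. What your approach buys is that it pins $(\theta_1,\theta_2)$ down exactly and rules out the logarithmic borderline automatically (a pure-power decay is incompatible with a $|x|^{-(n-\alpha)}\ln|x|$ asymptotic), anticipating the later slow/fast dichotomy; the paper's shortcut is simply more economical for the bare upper bound being claimed.
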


We now introduce our Liouville type theorem for the Hardy--Sobolev type system. Basically, our result states that the system has no non-negative ground state classical solutions in the subcritical case besides the trivial pair $u,v \equiv 0$.
\begin{theorem}\label{Liouville}
Suppose that, in addition, $\alpha \in (1,n)$. Then system \eqref{whls ie} does not admit any bounded and decaying positive classical solution whenever the following subcritical condition holds
\begin{equation}\label{subcritical}
\frac{n-\sigma_1}{1+q} + \frac{n-\sigma_2}{1+p} > n-\alpha.
\end{equation}
\end{theorem}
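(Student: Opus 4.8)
The plan is to argue by contradiction using a Rellich--Pohozaev identity, exploiting the decay estimates of Theorem \ref{slow decay} to annihilate the boundary contributions and the strict inequality \eqref{subcritical} to fix the sign of the interior term. By the Remark following the definitions I may assume $q\ge p$ and $\sigma_1\ge\sigma_2$, so that both parts of Theorem \ref{slow decay} are available. Assume, for contradiction, that $(u,v)$ is a bounded, decaying, positive classical solution of \eqref{whls ie}.

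First I would assemble the pointwise information needed on $\partial B_R$. Theorem \ref{slow decay}(i) supplies the slow-rate upper bounds $u(x)\le C|x|^{-q_0}$ and $v(x)\le C|x|^{-p_0}$, while Theorem \ref{slow decay}(ii) supplies the matching fast-rate lower bounds. Since $\alpha>1$, differentiating the Riesz representation gives $u,v\in C^1(\mathbb{R}^n\setminus\{0\})$ together with
\[ \nabla u(x) = -(n-\alpha)\int_{\mathbb{R}^n}\frac{(x-y)\,v(y)^q}{|x-y|^{n-\alpha+2}\,|y|^{\sigma_1}}\,dy, \]
and likewise for $\nabla v$; this integral is absolutely convergent precisely because $\alpha>1$, and estimating it with the slow-rate bounds yields $|\nabla u(x)|\le C|x|^{-q_0-1}$ and $|\nabla v(x)|\le C|x|^{-p_0-1}$ for large $|x|$. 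Near the origin the weights $|y|^{-\sigma_i}$ are integrable ($\sigma_i<n$), so $u,v\in C^1$ there as well. These bounds on $(u,v,\nabla u,\nabla v)$ are exactly what is required to control the surface integrals below.

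Next I would derive the identity. The structural fact is that \eqref{whls ie} is invariant under the scaling $(u,v)\mapsto(\lambda^{q_0}u(\lambda\,\cdot),\lambda^{p_0}v(\lambda\,\cdot))$, which hinges on the relations $p_0q=q_0+\alpha-\sigma_1$ and $q_0p=p_0+\alpha-\sigma_2$. Testing this scale action against the energy on an annulus $B_R\setminus B_\varepsilon$ (equivalently, for $\alpha=2k$, applying the classical Pohozaev identity to the poly-harmonic system \eqref{whls pde}, and for general $\alpha\in(1,n)$ performing the same Rellich computation directly on the differentiated integral equations) yields
\[ \Big(\tfrac{n-\sigma_1}{q+1}+\tfrac{n-\sigma_2}{p+1}-(n-\alpha)\Big)\int_{B_R\setminus B_\varepsilon}\frac{u^{p+1}}{|x|^{\sigma_2}}\,dx \;=\; \mathcal{B}(R)-\mathcal{B}(\varepsilon), \]
where I have used the symmetry of the kernel to identify $\int u^{p+1}/|x|^{\sigma_2}$ with $\int v^{q+1}/|x|^{\sigma_1}$, and $\mathcal{B}(\rho)$ denotes a surface integral over $\partial B_\rho$ built from $u,v,\nabla u,\nabla v$ and the field $x$. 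Letting $\varepsilon\to0$ kills $\mathcal{B}(\varepsilon)$ (it is $O(\varepsilon^{\,n-\max\{\sigma_1,\sigma_2\}})$, and $\sigma_i<n$), while the decay bounds of the previous step bound each term of $\mathcal{B}(R)$ by a negative power of $R$; hence $\mathcal{B}(R_k)\to0$ along some $R_k\to\infty$. Under \eqref{subcritical} the bracketed coefficient is strictly positive, so the monotone nonnegative integral $\int_{B_{R_k}}u^{p+1}/|x|^{\sigma_2}$ tends to $0$, forcing $u\equiv0$ and then $v\equiv0$, contradicting positivity.

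I expect the principal difficulty to lie in the rigorous derivation of the identity for non-integer $\alpha$: because the integral system is nonlocal one cannot simply integrate a pointwise differential relation by parts, and the Rellich computation must instead be carried out on the Riesz-potential representation itself, with careful justification of the differentiation and of the convergence of the resulting double integrals (this is exactly where $\alpha>1$ is indispensable). A secondary, more bookkeeping, obstacle is to confirm that the boundary exponents produced by Theorem \ref{slow decay} are genuinely negative throughout the admissible parameter range, so that a sequence $\mathcal{B}(R_k)\to0$ really exists; this is the step that forces one to play the slow-rate upper bounds against the fast-rate lower bounds rather than using either in isolation.
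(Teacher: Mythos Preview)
Your approach is correct and is essentially the paper's own: argue by contradiction, derive a Pohozaev-type identity from the scaling structure of the integral equations (differentiating $u(\lambda x)$ in $\lambda$ and integrating by parts, exactly the ``Rellich computation on the differentiated integral equations'' you describe), and use the slow-rate upper bounds from Theorem~\ref{slow decay} to annihilate the boundary terms so that the strictly positive coefficient $\frac{n-\sigma_1}{1+q}+\frac{n-\sigma_2}{1+p}-(n-\alpha)$ forces the interior integral to vanish.

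The paper is, however, more economical than your sketch in two respects. First, it never introduces explicit pointwise gradient bounds: the only surface terms that arise are $R\!\int_{\partial B_R} v^{q+1}|x|^{-\sigma_1}\,ds$, $R\!\int_{\partial B_R} u^{p+1}|x|^{-\sigma_2}\,ds$, and $R\!\int_{\partial B_R} v^q |x-z|^{-(n-\alpha)}|z|^{-\sigma_1}\,ds$, and each is disposed of via the \emph{integral} finiteness in Proposition~\ref{slow decay 1}(ii) (respectively, via $u(x)<\infty$) together with a sequence argument, not via pointwise decay of $\nabla u,\nabla v$. Second, the lower bounds of Theorem~\ref{slow decay}(ii) play no role whatsoever; your closing remark that one must ``play the slow-rate upper bounds against the fast-rate lower bounds'' is a misdiagnosis---only the upper bounds are used, and they already make every boundary exponent equal to $n-\alpha-(p_0+q_0)<0$ under \eqref{subcritical}. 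Dropping those two detours would streamline your write-up to match the paper's.
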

\begin{remark}
In \cite{Villavert:14c}, the author proved the following non-existence theorem but without any boundedness or decaying assumption on positive solutions.\medskip

\noindent{\bf Theorem A.}
\it System \eqref{whls ie} has no positive solution if either $pq \in (0,1]$ or when $pq > 1$ and $\max \lbrace p_0, q_0 \rbrace \geq n-\alpha$.\medskip

Interestingly, Theorem A and Theorem \ref{Liouville} are reminiscent of the non-existence results of Serrin and Zou \cite{SZ96} for the Lane--Emden system. We should also mention the earlier work in \cite{DAmbrosioMitidieri14}, which also obtained similar Liouville theorems among other interesting and related results. 
\end{remark}

Theorem \ref{Liouville} applies to integrable solutions as well. In particular, we later show that integrable solutions are indeed radially symmetric and decreasing about the origin. Therefore, the proof of Theorem \ref{Liouville} can be adopted in this situation to get the following. 
\begin{corollary}\label{Liouville integrable}
Suppose that, in addition, $\alpha \in (1,n)$. Then system \eqref{whls ie} does not admit any positive integrable solution whenever the subcritical condition \eqref{subcritical} holds.
\end{corollary}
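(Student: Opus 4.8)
The plan is to deduce the statement from Theorem \ref{Liouville} by showing that every positive integrable solution of \eqref{whls ie} is necessarily a bounded and decaying positive classical solution; once this is established, the non-existence under the subcritical condition \eqref{subcritical} follows immediately from that theorem (or, in borderline cases, by rerunning its proof). The conceptual point is that the membership $u\in L^{r_0}$, $v\in L^{s_0}$ is a rigid constraint that forces the regularity, symmetry and precise decay of the pair, so that integrability is merely a disguised form of the hypotheses of Theorem \ref{Liouville}.

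First I would upgrade the regularity. Starting from $(u,v)\in L^{r_0}\times L^{s_0}$ and reinserting the pair into the representation \eqref{whls ie}, I run a bootstrap based on the doubly weighted Hardy--Littlewood--Sobolev inequality recalled in the introduction, exactly as in the optimal-integrability argument of the paper. This successively improves the summability of $u$ and $v$ until $u,v\in L^\infty(\mathbb{R}^n)$ and both are continuous, with the required smoothness away from the origin; in view of the Remark on classical solutions this makes $(u,v)$ a bounded positive classical solution in $C^{\lfloor\alpha\rfloor}(\mathbb{R}^n\setminus\{0\})\cap C(\mathbb{R}^n)$. I would then invoke the radial symmetry and monotonicity of integrable solutions established later in the paper by the integral form of the method of moving planes: $u$ and $v$ are radially symmetric and strictly decreasing in $|x|$.

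Next I would convert integrability into decay. Because $u$ is radial and decreasing, $u(x)^{r_0}|x|^n\lesssim \int_{|y|\ge |x|/2}u(y)^{r_0}\,dy\to 0$, whence $u(x)=o(|x|^{-q_0})$ (and likewise $v(x)=o(|x|^{-p_0})$) as $|x|\to\infty$, a decay strictly faster than the slow rates, which are themselves not in $L^{r_0}\times L^{s_0}$ at infinity. Feeding the monotone, bounded profiles back into \eqref{whls ie} and estimating the weighted Riesz potentials by splitting into the regions near $x$, near the origin, and the far field, I would pin the rates to the fast rates, in particular $u(x)\simeq |x|^{-(n-\alpha)}$ with the corresponding behavior for $v$. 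Thus $(u,v)$ is a decaying solution in the sense of the definition, and Theorem \ref{Liouville} applies to give a contradiction whenever \eqref{subcritical} holds; hence no positive integrable solution can exist.

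The main obstacle is this last conversion from integrability to a genuine two-sided decay rate, as $L^{r_0}$-membership by itself yields only an upper (indeed merely $o$-type) decay estimate. Radial monotonicity is exactly what makes the passage possible, turning the integral bound into a pointwise one and letting the integral equation reproduce the sharp fast-rate profile. If establishing the exact rates proves delicate---most notably in the logarithmic borderline case $p(n-\alpha)+\sigma_2=n$, where $v$ is not a pure power and so does not literally meet the definition of a decaying solution---one bypasses the issue by adopting the proof of Theorem \ref{Liouville} directly on the radial integrable pair: the Pohozaev-type integral identity underlying that proof requires only the convergence of the relevant weighted integrals, which is guaranteed here by integrability rather than by a prescribed decay rate.
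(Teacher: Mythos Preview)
Your fallback argument is precisely the paper's approach: the text preceding Corollary~\ref{Liouville integrable} states that integrable solutions are radially symmetric and decreasing about the origin (Theorem~\ref{radial symmetry}), and that the proof of Theorem~\ref{Liouville} can then be adopted directly. The detour you propose through the full fast decay rates is unnecessary and somewhat fragile here: the fast-rate analysis in Section~\ref{fast section} (notably Proposition~\ref{fast decay of u}) invokes the non-subcritical condition~\eqref{non-subcritical0} through estimates such as~\eqref{consequence of nonsub}, so it does not obviously transfer to the subcritical regime you are working in, and, as you yourself observe, the logarithmic case would not yield a ``decaying'' solution in the sense required to quote Theorem~\ref{Liouville} verbatim. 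What actually suffices---and what the paper has in mind---is just what you extract along the way: boundedness and continuity of integrable solutions (Section~\ref{integrable section}), radial monotonicity, and the resulting upper bounds $u(x)=o(|x|^{-q_0})$, $v(x)=o(|x|^{-p_0})$. These already force $\int_{\mathbb{R}^n}|x|^{-\sigma_2}u^{p+1}\,dx$ and $\int_{\mathbb{R}^n}|x|^{-\sigma_1}v^{q+1}\,dx$ to converge exactly as in Proposition~\ref{slow decay 1}(ii), after which the Pohozaev computation in the proof of Theorem~\ref{Liouville} runs unchanged. So your proposal is correct, but the efficient route is to go straight to the fallback rather than through the fast rates.
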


The next theorem concerns the properties of integrable solutions. More precisely, it states that we can distinguish integrable solutions from the other ground states with ``slower" decay rates in that integrable solutions are equivalently characterized as the bounded positive solutions which decay with the fast rates. In dealing with integrable solutions, Corollary \ref{Liouville integrable} indicates that we can restrict our attention to the non-subcritical case:
\begin{equation}\label{non-subcritical0}
q_0 + p_0 \leq n-\alpha,
\end{equation}
or equivalently
\begin{equation}\label{non-subcritical}
\frac{n-\sigma_1}{1+q} + \frac{n-\sigma_2}{1+p} \leq n-\alpha.
\end{equation}
We shall also assume that $p,q > 1$. Moreover, the theorem further asserts that there are also no positive integrable solutions in the supercritical case.

\begin{theorem}\label{integrable theorem}
Suppose $q \geq p > 1$, $\sigma_1 \geq \sigma_2$ (so that $q_0 \geq p_0$), and let $u,v$ be positive solutions of \eqref{whls ie} satisfying the non-subcritical condition \eqref{non-subcritical}. 

\noindent (i) Then $u,v$ are integrable solutions if and only if $u,v$ are bounded and decay with the \textbf{fast rates} as $|x| \longrightarrow \infty$:

\begin{equation*}
\left\{\begin{array}{ll}
u(x) \simeq |x|^{-(n-\alpha)}; \\
v(x) \simeq |x|^{-(n-\alpha)}, 							& \text{ if }\, p(n-\alpha) + \sigma_2 > n; \\
v(x) \simeq |x|^{-(n-\alpha)}\ln |x|, 					& \text{ if }\, p(n-\alpha) + \sigma_2 = n; \\
v(x) \simeq |x|^{-(p(n-\alpha) - (\alpha-\sigma_2))}, 	& \text{ if }\, p(n-\alpha) + \sigma_2 < n.
\end{array}
\right.
\end{equation*}
(ii) Suppose that, in addition, $\alpha \in (1,n)$. If $u,v$ are integrable solutions of \eqref{whls ie}, then the critical condition,
\begin{equation}\label{supercritical}
\frac{n-\sigma_1}{1+q} + \frac{n-\sigma_2}{1 + p} = n-\alpha,
\end{equation}
necessarily holds.
\end{theorem}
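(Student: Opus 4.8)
The plan is to handle part (i) by asymptotic analysis of the two Riesz convolutions in \eqref{whls ie}, and part (ii) by a Pohozaev-type integral identity. For the easy direction of (i), namely ``bounded with the fast rates $\Rightarrow$ integrable,'' I would split each of $\int_{\mathbb{R}^n} u^{r_0}\,dx$ and $\int_{\mathbb{R}^n} v^{s_0}\,dx$ over $\{|x|\le 1\}$ and $\{|x|>1\}$. On the unit ball, boundedness of $u,v$ together with $\sigma_1,\sigma_2<\alpha<n$ gives convergence; at infinity the prescribed rates reduce matters to comparing the decay exponents with $n/r_0=q_0$ and $n/s_0=p_0$. The non-subcritical condition \eqref{non-subcritical}, i.e. $q_0+p_0\le n-\alpha$, yields $q_0,p_0<n-\alpha$, which settles the rate $|x|^{-(n-\alpha)}$ and absorbs the borderline logarithm; in the remaining case $p(n-\alpha)+\sigma_2<n$ one needs $\beta:=p(n-\alpha)-(\alpha-\sigma_2)>p_0$, which follows from the elementary identities $qp_0=q_0+\alpha-\sigma_1$ and $pq_0=p_0+\alpha-\sigma_2$ (immediate from the definitions of $p_0,q_0$) together with $q_0<n-\alpha$, since these give $p_0+\alpha-\sigma_2=pq_0<p(n-\alpha)$.

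The substance of (i) is the converse, ``integrable $\Rightarrow$ bounded with the fast rates.'' I would first lift integrability to boundedness (and $u,v\to 0$) by the standard regularity/bootstrap scheme applied to the weighted Hardy--Littlewood--Sobolev system \eqref{whls ie}, so that $u,v$ become bounded decaying solutions to which Theorem \ref{slow decay} applies. The heart of the argument is a dichotomy: the only self-consistent decay pairs for the coupled convolutions are the slow rates $(q_0,p_0)$ and the fast rates, and because $pq>1$ any decay strictly faster than slow is amplified geometrically through the feedback between the two equations (one full cycle improves a gain $\delta$ by $\delta\mapsto pq\,\delta$) until it saturates at the kernel rate $n-\alpha$. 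Integrability then selects the fast alternative: the slow profile $v\simeq|x|^{-p_0}$ would produce the borderline-divergent $\int_{|x|>1}|x|^{-n}\,dx$ and hence $v\notin L^{s_0}$, a contradiction. This yields $u(x)\le C|x|^{-(n-\alpha)}$, and for $v$ the trichotomy according to whether the density $u^p/|y|^{\sigma_2}$ is integrable at infinity, i.e. whether $p(n-\alpha)+\sigma_2$ exceeds, equals, or is smaller than $n$, giving $|x|^{-(n-\alpha)}$, $|x|^{-(n-\alpha)}\ln|x|$, or $|x|^{-\beta}$ respectively. The matching lower bounds are precisely Theorem \ref{slow decay}(ii).

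For part (ii) I would derive the necessity of \eqref{supercritical} from a Pohozaev identity. Writing $K(x,y)=|x-y|^{-(n-\alpha)}$, its symmetry shows that testing the first equation of \eqref{whls ie} against $u^p/|x|^{\sigma_2}$ and the second against $v^q/|x|^{\sigma_1}$ gives a single energy $\mathcal E:=\int u^{p+1}/|x|^{\sigma_2}\,dx=\int v^{q+1}/|x|^{\sigma_1}\,dx$, finite and positive, its finiteness guaranteed by the fast rates from part (i). I would then evaluate $\int(x\cdot\nabla u)\,u^p/|x|^{\sigma_2}\,dx+\int(y\cdot\nabla v)\,v^q/|y|^{\sigma_1}\,dy$ in two ways. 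Integrating by parts, using $\nabla\cdot(x|x|^{-\sigma})=(n-\sigma)|x|^{-\sigma}$ and letting $\partial B_R$ escape to infinity (the boundary terms vanishing thanks to the fast decay), gives $-\big(\tfrac{n-\sigma_2}{p+1}+\tfrac{n-\sigma_1}{q+1}\big)\mathcal E$. On the other hand, differentiating the convolutions under the integral sign and invoking Euler's relation $x\cdot\nabla_x K+y\cdot\nabla_y K=-(n-\alpha)K$ for the kernel, which is homogeneous of degree $-(n-\alpha)$, gives $-(n-\alpha)\mathcal E$. Equating the two and cancelling $\mathcal E\neq 0$ produces exactly \eqref{supercritical}. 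The hypothesis $\alpha\in(1,n)$ enters here to make $\nabla_x K\sim|x-y|^{-(n-\alpha+1)}$ locally integrable, thereby legitimizing the differentiation under the integral and the use of Fubini.

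The main obstacle is the converse in part (i): converting the mere $L^{r_0}$-- and $L^{s_0}$--membership into a quantitative head start over the slow rates that the $pq>1$ feedback can then amplify, and propagating the three borderline convolution regimes through the iteration without losing sharpness. By contrast, part (ii) is essentially bookkeeping once part (i) is in hand, the only genuine care being the rigorous vanishing of the boundary terms and the differentiation under the integral sign, which is precisely where $\alpha>1$ and the fast decay are needed.
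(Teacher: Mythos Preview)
Your treatment of the easy direction of (i) and of part (ii) is correct and essentially matches the paper: Proposition~\ref{bounded decay are integrable} carries out exactly the splitting and exponent comparisons you describe, and the paper derives (ii) from the same Pohozaev identity you outline, using the fast decay to kill the boundary terms and the homogeneity of the Riesz kernel to produce the factor $n-\alpha$.

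The genuine gap is in the hard direction of (i), and it is precisely the point you yourself flag as ``the main obstacle.'' Your feedback iteration $\delta\mapsto pq\,\delta$ is correct once it starts, but you supply no mechanism for the initial $\delta>0$. Ruling out the exact slow profile via $v\in L^{s_0}$ only forbids the lower bound $v(x)\ge c|x|^{-p_0}$; it does not by itself produce a pointwise upper bound $v(x)\le C|x|^{-(p_0+\delta)}$, and since the slow rates $(q_0,p_0)$ are a fixed point of the convolution feedback, the iteration stalls there. The ``dichotomy'' you invoke is established in the paper (Theorem~\ref{slow theorem}(iii)) only under the additional hypothesis that $u,v$ are \emph{decaying solutions}, i.e.\ already satisfy $u\simeq|x|^{-\theta_1}$, $v\simeq|x|^{-\theta_2}$ for some $\theta_i>0$, which is what you are trying to prove. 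The paper avoids this circularity by a different route: it bootstraps $(r_0,s_0)$ to the optimal integrability range (Theorem~\ref{integrability}) via a contraction argument with the weighted HLS inequality, proves boundedness and decay to zero, and then---crucially---establishes radial symmetry and monotonicity about the origin by the integral method of moving planes (Theorem~\ref{radial symmetry}). Monotonicity is what converts $L^s$--membership into a pointwise rate: from $v$ decreasing and $v\in L^s$ one gets $v(|x|/2)^s|x|^n\le C\int_{|x|/4<|y|<|x|/2}v(y)^s\,dy<\infty$, hence $v(|x|/2)\le C|x|^{-n/s}$, which then controls the near-diagonal piece $\int_{B_{|x|/2}(x)}$ of the convolution and allows a direct computation of the precise limits $\lim_{|x|\to\infty}|x|^{n-\alpha}u(x)=A_0$, etc.\ (Propositions~\ref{fast decay of u}--\ref{fast decay of v 3}). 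Your scheme is missing this moving-planes step or any substitute for it.
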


\begin{remark}
(i) Notice that if $\sigma_1 = \sigma_2 = 0$, the Hardy--Sobolev type system \eqref{whls ie} coincides with the HLS system \eqref{whls el}. Indeed, our main results on the integrable solutions, including the subsequent results below on the optimal integrability, boundedness, radial symmetry and monotonicity of solutions, coincide with and thus extend past results of \cite{CL09,LL13a} (also see \cite{Caristi2008,LL13,LLM12,ZhaoLei2012} for closely related results).

(ii) To further illustrate their connection, we remark that the integrable solutions in the unweighted case turn out to be the finite-energy solutions of the HLS system \eqref{whls el}, i.e. the critical points $(u,v) \in L^{p+1}(\mathbb{R}^n) \times L^{q+1}(\mathbb{R}^n)$ for the HLS functional.

(iii) In the weighted case, however, system \eqref{whls ie} cannot be recovered from \eqref{whls el} due to their different singular weights. In fact, the asymptotic behavior of positive solutions between the two are \underline{not} the same (cf. \cite{LeiMa11,LiLim07}).
\end{remark}

Our last main result asserts that if $u,v$ are bounded and positive but are not integrable solutions, then they ``decay with almost the slow rates" and we conjecture that they actually do converge with the slow rates. Of course, if this conjecture were true, then Theorem \ref{Liouville} would hold without any additional decaying assumption on solutions. However, if, in addition, $u,v$ are decaying solutions, then they do indeed decay with the slow rates. 

\begin{theorem}\label{slow theorem}
Let $u,v$ be bounded positive solutions of \eqref{whls ie}. Then there hold the following. 
\begin{enumerate}[(i)]
\item There does not exist a positive constant $c$ such that
$$\text{either }\, u(x) \geq c(1+|x|)^{-\theta_1} \,\text{ or }\, v(x)\geq c(1+|x|)^{-\theta_2},$$
where $\theta_1 < q_0,~ \theta_2 < p_0$. 

\item If $u,v$ are not integrable solutions, then $u,v$ decay with rates not faster than the slow rates. Namely, there does not exist a positive constant $C$ such that 
$$\text{either }\, u(x) \leq C(1+|x|)^{-\theta_3} \,\text{ or }\, v(x) \leq C(1+|x|)^{-\theta_4},$$
where $\theta_3 > q_0,~ \theta_4 > p_0$. 

\item If $u,v$  are not integrable solutions but are decaying solutions, then $u,v$ must necessarily have the slow rates as $|x| \longrightarrow \infty$.
\end{enumerate}
\end{theorem}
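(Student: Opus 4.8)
The plan is to prove the three parts in order, with (i) and (ii) carrying all of the analytic content and (iii) following by simply combining them. The engine throughout is the pair of scaling identities $q_0 = q\,p_0 - (\alpha - \sigma_1)$ and $p_0 = p\,q_0 - (\alpha - \sigma_2)$, which one verifies directly from the definitions of $p_0,q_0$ and which express that the slow rates are exactly the fixed point of the decay-rate map induced by \eqref{whls ie}. Note also that $p_0,q_0>0$ since $\sigma_i<\alpha$, and that under \eqref{non-subcritical}, i.e. $q_0+p_0\leq n-\alpha$, one has both $p_0<n-\alpha$ and $q_0<n-\alpha$.

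For part (i) I would argue by contradiction and bootstrap upward. Suppose $u(x)\geq c(1+|x|)^{-\theta_1}$ for large $|x|$ with $0<\theta_1<q_0$ (if $\theta_1\leq 0$ then $u$ fails to vanish and the next estimate already forces growth). Inserting this into the second equation of \eqref{whls ie} and retaining only the contribution of the ball $B_{|x|/2}(x)$ — on which $|y|\simeq|x|$ and $\int_{B_{|x|/2}(x)}|x-y|^{-(n-\alpha)}\,dy\simeq|x|^{\alpha}$ — yields a clean lower bound $v(x)\geq c'(1+|x|)^{-\theta_2}$ with $\theta_2 = p\theta_1 - (\alpha-\sigma_2)$; this needs no convergence hypothesis because only one region is used. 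Feeding it back into the first equation gives $u(x)\geq c''(1+|x|)^{-\theta_1'}$ with $\theta_1' = q\theta_2 - (\alpha-\sigma_1)$. The composite rate map $\theta\mapsto pq\,\theta - [\,q(\alpha-\sigma_2)+(\alpha-\sigma_1)\,]$ has $q_0$ as unique fixed point and slope $pq>1$, so $\theta_1^{(k)}-q_0 = (pq)^k(\theta_1-q_0)\to-\infty$. After finitely many steps one propagated rate is $\leq 0$, forcing $u$ or $v$ to be bounded below by a positive power of $|x|$ (or by a positive constant, whose next iterate again produces growth since $\alpha-\sigma_i>0$), contradicting boundedness. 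Starting from the $v$-bound is identical by symmetry.

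For part (ii) I would again argue by contradiction, now with upper bounds, showing that fast decay of \emph{either} component forces integrability of the pair. Assume $u(x)\leq C(1+|x|)^{-\theta_3}$ with $\theta_3>q_0$. The standard weighted convolution estimate applied to the second equation gives $v(x)\leq C'(1+|x|)^{-\theta_4}$ with $\theta_4=\min\{p\theta_3-(\alpha-\sigma_2),\,n-\alpha\}$, with a logarithmic factor in the borderline case $p\theta_3+\sigma_2=n$; the hypotheses guarantee $p\theta_3+\sigma_2>\alpha$ and $\sigma_2<n$, so the estimate applies. Using $p_0=p q_0-(\alpha-\sigma_2)$ together with $p_0<n-\alpha$, both candidate rates exceed $p_0$, hence $\theta_4>p_0$. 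Since $u,v$ are bounded and decay strictly faster than $|x|^{-q_0}$ and $|x|^{-p_0}$, while $r_0=n/q_0$ and $s_0=n/p_0$, the tail integrals $\int_{|x|>1}u^{r_0}$ and $\int_{|x|>1}v^{s_0}$ converge (any logarithm is harmless since the power is strictly subcritical) and the integrals over $B_1$ are finite by boundedness. Thus $u\in L^{r_0}$, $v\in L^{s_0}$, i.e. $u,v$ are integrable, contradicting the hypothesis; the case beginning with a fast upper bound on $v$ is handled the same way via the first equation.

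Finally, part (iii) is immediate: if $u,v$ are decaying then $u(x)\simeq|x|^{-\theta_1}$ and $v(x)\simeq|x|^{-\theta_2}$ supply simultaneous two-sided bounds. Part (i) forbids lower bounds with $\theta_1<q_0$ or $\theta_2<p_0$, giving $\theta_1\geq q_0,\ \theta_2\geq p_0$; and since $u,v$ are not integrable, part (ii) forbids upper bounds with $\theta_1>q_0$ or $\theta_2>p_0$, giving $\theta_1\leq q_0,\ \theta_2\leq p_0$. Hence $\theta_1=q_0$, $\theta_2=p_0$, the slow rates. I expect the main obstacle to be the weighted convolution estimate underlying part (ii): establishing the sharp upper bound $|x|^{-\min\{p\theta_3-(\alpha-\sigma_2),\,n-\alpha\}}$ (with the logarithmic borderline) uniformly over the relevant parameter ranges, which requires splitting $\mathbb{R}^n$ into the regions near the origin, near $x$, and the far field, and verifying the convergence conditions $\sigma_2<n$ and $p\theta_3+\sigma_2>\alpha$ in each. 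By contrast the lower bounds in part (i) are comparatively soft, since they exploit only a single well-placed region.
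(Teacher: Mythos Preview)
Your argument for part~(i) is exactly the paper's: the same bootstrap on $B_{|x|/2}(x)$, the same affine recursion $b_k=(pq)^k(b_0-q_0)+q_0$, the same contradiction once an exponent becomes nonpositive. Part~(iii) likewise matches.

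For part~(ii) you take a genuinely different and heavier route. The paper never touches the second integral equation here: it simply observes that $u(x)\leq C(1+|x|)^{-\theta_3}$ with $\theta_3>q_0=n/r_0$ makes the tail integral $\int_{|x|>R} u^{r_0}$ converge, hence $u\in L^{r_0}$ in one line; symmetrically, a fast upper bound on $v$ gives $v\in L^{s_0}$. No convolution estimate, no three-region splitting, no borderline logarithm. You instead push the upper bound on $u$ through the $v$-equation to extract a decay rate $\theta_4=\min\{p\theta_3-(\alpha-\sigma_2),\,n-\alpha\}>p_0$ for $v$, and then conclude \emph{both} memberships $u\in L^{r_0}$, $v\in L^{s_0}$ simultaneously. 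What your route buys is a clean match with the definition of ``integrable solution'', which requires both components; the paper's argument is terse on this point (it declares a contradiction after showing only one membership), though the missing half is easily supplied by the weighted HLS machinery of Section~\ref{integrable section}. The cost is precisely the weighted convolution upper bound you flag as the main obstacle, which the paper sidesteps entirely.

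One correction to your justification: you invoke $p_0<n-\alpha$ in part~(ii) and attribute it to the non-subcritical condition~\eqref{non-subcritical}, but \eqref{non-subcritical} is \emph{not} a hypothesis of Theorem~\ref{slow theorem}. The right source is Theorem~A quoted in Remark~3: the mere existence of a positive solution already forces $\max\{p_0,q_0\}<n-\alpha$, so the inequality is available unconditionally here.
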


\begin{remark}
For the sake of conciseness, rather than formally stating the corresponding results for the poly-harmonic system \eqref{whls pde} as corollaries, we only point out that our main results for the integral system do translate over to system \eqref{whls pde} provided that the equivalence conditions described earlier are satisfied. 
\end{remark}

The remaining parts of this paper are organized in the following way. In section \ref{slow section}, some preliminary results are established in which Theorem \ref{slow decay} is an immediate consequence of. Then, we apply an integral form of a Pohozaev type identity to prove Theorem \ref{Liouville}. Section \ref{integrable section} establishes several key properties of integrable solutions: the optimal integrability, boundedness and convergence properties of integrable solutions, and we show the integrable solutions are radially symmetric and decreasing about the origin. Using these properties, we prove Theorem \ref{integrable theorem} in section \ref{fast section}. The paper then concludes with section \ref{non-integrable section}, which contains the proof of Theorem \ref{slow theorem}.

\section{Slow decay rates and the non-existence theorem}\label{slow section}
\begin{proposition}\label{slow decay 1} Let $u,v$ be bounded and decaying positive solutions of \eqref{whls ie}.
\begin{enumerate}[(i)]
\item There exists a positive constant $C$ such that as $|x| \longrightarrow \infty$
\begin{equation*}
u(x) \leq C|x|^{-q_0} \,\text{ and }\, v(x) \leq C|x|^{-p_0}.
\end{equation*}
\item Moreover, the improper integrals,
$$ \int_{\mathbb{R}^n} \frac{u(x)^{p+1}}{|x|^{\sigma_2}}\,dx \,\text{ and }\, \int_{\mathbb{R}^n} \frac{v(x)^{q+1}}{|x|^{\sigma_1}}\,dx,$$
are finite provided that the subcritical condition \eqref{subcritical} holds.
\end{enumerate}
\end{proposition}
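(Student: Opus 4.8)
The plan is to reduce everything to a single convolution decay estimate and then to some algebraic bookkeeping of exponents. The central tool I would establish first is the following decay lemma: if $w(y)\le C(1+|y|)^{-\beta}$ with $\beta>0$, if $\gamma\in[0,n)$, and if $\beta+\gamma>\alpha$ (so the integral converges at infinity), then for $|x|$ large
\[
\int_{\mathbb{R}^n}\frac{w(y)}{|x-y|^{n-\alpha}|y|^{\gamma}}\,dy \le C|x|^{-\min\{n-\alpha,\,\beta+\gamma-\alpha\}},
\]
with an extra factor $\ln|x|$ in the borderline case $\beta+\gamma=n$. I would prove this by the usual three-region decomposition $\mathbb{R}^n=\{|y|<|x|/2\}\cup\{|x|/2\le|y|\le2|x|\}\cup\{|y|>2|x|\}$: on the first region one uses $|x-y|\gtrsim|x|$ together with integrability of $|y|^{-\gamma}$ near the origin ($\gamma<n$) and the growth of $\int_{1<|y|<|x|/2}|y|^{-(\beta+\gamma)}\,dy$; on the third region $|x-y|\gtrsim|y|$ with $\beta+\gamma>\alpha$ gives convergence with rate $|x|^{-(\beta+\gamma-\alpha)}$; on the middle region $|y|\simeq|x|$ and $\int_{|z|\lesssim|x|}|z|^{-(n-\alpha)}\,dz\simeq|x|^{\alpha}$. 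Running the same computation with a matching lower bound $w\ge c|y|^{-\beta}$ (using the near-origin and middle regions) produces the reverse inequality, so for a decaying pair the rate is pinned down exactly. This lemma is the main technical obstacle, but it is routine once the regions are arranged.

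For part (i), I would apply the lemma to both equations with the exact rates supplied by the decaying hypothesis $u\simeq|x|^{-\theta_1}$, $v\simeq|x|^{-\theta_2}$. Finiteness of $u$ and $v$ already forces $q\theta_2+\sigma_1>\alpha$ and $p\theta_1+\sigma_2>\alpha$, and the two-sided lemma yields the self-consistency equalities
\[
\theta_1=\min\{n-\alpha,\,q\theta_2+\sigma_1-\alpha\},\qquad \theta_2=\min\{n-\alpha,\,p\theta_1+\sigma_2-\alpha\}.
\]
Since positive solutions exist, Theorem A gives $q_0,p_0<n-\alpha$. I then argue by contradiction: if $\theta_1<q_0$, then because $\theta_1<q_0<n-\alpha$ the cap cannot be active, so $\theta_1=q\theta_2+\sigma_1-\alpha$; comparing with $q_0=qp_0+\sigma_1-\alpha$ forces $\theta_2<p_0$, and symmetrically $\theta_2<p_0<n-\alpha$ forces $\theta_2=p\theta_1+\sigma_2-\alpha$. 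But then $(\theta_1,\theta_2)$ solves the linear system whose unique solution is $(q_0,p_0)$, contradicting $\theta_1<q_0$. Hence $\theta_1\ge q_0$, and symmetrically $\theta_2\ge p_0$, which is exactly $u(x)\le C|x|^{-q_0}$ and $v(x)\le C|x|^{-p_0}$.

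For part (ii), I would combine the upper bounds from (i) with boundedness. Splitting each integral into $\{|x|<1\}$ and $\{|x|\ge1\}$, the inner piece is finite because $u,v$ are bounded and $|x|^{-\sigma_i}$ is locally integrable ($\sigma_i<\alpha<n$). On the outer piece $u^{p+1}|x|^{-\sigma_2}\le C|x|^{-(q_0(p+1)+\sigma_2)}$, and the algebraic identity $q_0(p+1)+\sigma_2=p_0+q_0+\alpha$ (read off from $p_0=pq_0+\sigma_2-\alpha$) shows integrability at infinity holds precisely when $p_0+q_0+\alpha>n$. I would then record the equivalence $p_0+q_0>n-\alpha\iff \frac{n-\sigma_1}{1+q}+\frac{n-\sigma_2}{1+p}>n-\alpha$, i.e. the subcritical condition \eqref{subcritical}, so that the hypothesis guarantees convergence; the integral $\int v^{q+1}|x|^{-\sigma_1}$ is treated identically via $p_0(q+1)+\sigma_1=p_0+q_0+\alpha$.
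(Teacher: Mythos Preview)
Your argument is correct, and part (ii) matches the paper's almost exactly. For part (i), however, you take a genuinely different route. The paper does something much shorter: restricting the first integral to the annulus $B_{|x|}(0)\setminus B_R(0)$ and using the decaying hypothesis to replace $v(y)$ by $v(x)$ (since $|y|\le |x|$ there), one gets directly
\[
u(x)\ \ge\ C\,v(x)^q\,|x|^{\alpha-\sigma_1},\qquad v(x)\ \ge\ C\,u(x)^p\,|x|^{\alpha-\sigma_2},
\]
and substituting one into the other yields $u(x)^{pq-1}\le C|x|^{-(pq-1)q_0}$, i.e.\ $u(x)\le C|x|^{-q_0}$ (and symmetrically for $v$). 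No convolution decay lemma, no appeal to Theorem~A, no case analysis on which branch of a minimum is active.

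Your approach builds more machinery---the two-sided three-region lemma and the self-consistency system---and then invokes Theorem~A to rule out the cap $n-\alpha$. This is heavier but it does buy something: it pins down the \emph{exact} rates $\theta_1,\theta_2$ in all regimes, which is essentially the content of the later Theorems~\ref{integrable theorem} and~\ref{slow theorem}. The paper's two-line argument, by contrast, extracts only the upper bounds $\theta_1\ge q_0$, $\theta_2\ge p_0$, which is precisely what Proposition~\ref{slow decay 1}(i) asks for, and does so in a self-contained way.
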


\begin{proof}
For $|x| > 2R$ with $R>0$ suitably large,
\begin{align}\label{lu}
u(x) \geq {} & \int_{B_{|x|}(0)\backslash B_{R}(0)} \frac{v(y)^q}{|x-y|^{n-\alpha}|y|^{\sigma_1}}\,dy \notag \\
\geq {} & Cv(x)^{q}|x|^{\alpha - \sigma_1 -n} \int_{B_{|x|}(0)\backslash B_{R}(0)} \,dt \geq Cv(x)^{q}|x|^{\alpha-\sigma_1}.
\end{align}
Similarly, we can show 
\begin{equation}\label{lv}
v(x) \geq Cu(x)^{p}|x|^{\alpha-\sigma_2},
\end{equation}
and combining \eqref{lu} with \eqref{lv} gives us
\begin{align*}
u(x) \geq Cv(x)^q |x|^{\alpha-\sigma_1} \geq Cu(x)^{pq}|x|^{q(\alpha-\sigma_2)+(\alpha-\sigma_1)}, \\
v(x) \geq Cu(x)^p |x|^{\alpha-\sigma_2} \geq Cv(x)^{pq}|x|^{p(\alpha-\sigma_1)+(\alpha-\sigma_2)}.
\end{align*}
Indeed, these estimates imply $$u(x) \leq C|x|^{-q_0} \,\text{ and }\, v(x) \leq C|x|^{-p_0} \,\text{ as }\, |x| \longrightarrow \infty.$$
In addition,
\begin{align*}
\int_{\mathbb{R}^n} \frac{u(x)^{p+1}}{|x|^{\sigma_2}}\,dx \leq {} & \int_{B_{R}(0)} \frac{u(x)^{p+1}}{|x|^{\sigma_2}}\,dx + \int_{B_{R}(0)^C} \frac{u(x)^{p+1}}{|x|^{\sigma_2}}\,dx \\
\leq {} & C_{1} + C_{2}\int_{R}^{\infty} t^{ -q_0 (p+1) + n -\sigma_2} \,\frac{dt}{t} < \infty, 
\end{align*}
since \eqref{subcritical} implies $-q_0 (p+1) + n - \sigma_2 = n-\alpha - (q_0 + p_0) < 0$. Likewise, $\int_{\mathbb{R}^n} \frac{v(x)^{q+1}}{|x|^{\sigma_{1}}}\,dx < \infty$ using similar calculations, and this completes the proof.
\end{proof}

\begin{proposition}\label{slow decay 2}
Let $u,v$ be bounded positive solutions of \eqref{whls ie}. Then there exists a positive constant $C>0$ such that as $|x|\longrightarrow \infty$,
$$\ds u(x) \geq \frac{C}{(1+|x|)^{n-\alpha}} \,\,\text{ and }\,\, \ds v(x) \geq \frac{C}{(1+|x|)^{\min\lbrace n-\alpha,\, p(n-\alpha)-(\alpha -\sigma_2) \rbrace }}. $$
\end{proposition}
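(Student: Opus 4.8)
The plan is to read off both lower bounds directly from the integral representations \eqref{whls ie}, in each case throwing away all of the (nonnegative) integrand except over a judiciously chosen region and estimating the kernel and weights there.

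First I would establish the universal bound $u(x) \geq C(1+|x|)^{-(n-\alpha)}$ for all $x$. Fix $R \geq 1$ and restrict the integral defining $u$ to $B_R(0)$. For $y \in B_R(0)$ one has $|x-y| \leq |x| + R \leq R(1+|x|)$, hence $|x-y|^{-(n-\alpha)} \geq C(1+|x|)^{-(n-\alpha)}$, so that
$$ u(x) \geq \int_{B_R(0)} \frac{v(y)^q}{|x-y|^{n-\alpha}|y|^{\sigma_1}}\,dy \geq \frac{C}{(1+|x|)^{n-\alpha}}\int_{B_R(0)} \frac{v(y)^q}{|y|^{\sigma_1}}\,dy. $$
The last integral is a fixed positive constant, because $v>0$ and $\sigma_1 < \alpha < n$ makes $|y|^{-\sigma_1}$ locally integrable. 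The identical argument applied to the equation for $v$ yields the first candidate bound $v(x) \geq C(1+|x|)^{-(n-\alpha)}$.

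The substantive part is the second candidate bound $v(x) \geq C(1+|x|)^{-(p(n-\alpha)-(\alpha-\sigma_2))}$, which dominates the first precisely when $p(n-\alpha)+\sigma_2 < n$. Here I would feed the bound on $u$ just obtained back into the representation for $v$, giving $u(y)^p \geq C(1+|y|)^{-p(n-\alpha)}$, and then restrict the integration to the near-diagonal ball $\{|y-x|\leq |x|/2\}$. On this ball $|y| \simeq |x|$, so for $|x|$ large both weights are bounded below by clean powers, $(1+|y|)^{-p(n-\alpha)} \geq C|x|^{-p(n-\alpha)}$ and $|y|^{-\sigma_2} \geq C|x|^{-\sigma_2}$; pulling these out leaves
$$ v(x) \geq C|x|^{-p(n-\alpha)-\sigma_2}\int_{\{|z|\leq |x|/2\}}\frac{dz}{|z|^{n-\alpha}} = C\,|x|^{-p(n-\alpha)-\sigma_2+\alpha}, $$
since $\int_{\{|z|\leq r\}}|z|^{-(n-\alpha)}\,dz \simeq r^{\alpha}$ for $\alpha \in (0,n)$. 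The exponent simplifies to exactly $-(p(n-\alpha)-(\alpha-\sigma_2))$.

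Combining the two bounds for $v$ gives $v(x) \geq C(1+|x|)^{-\min\{n-\alpha,\,p(n-\alpha)-(\alpha-\sigma_2)\}}$, the min arising because the larger of the two lower bounds corresponds to the smaller exponent; the $(1+|x|)$ normalization is valid near the origin as well, since there both candidate quantities are comparable to a positive constant and the first bound already suffices. The main obstacle is the second bound for $v$: one must recognize that it is the near-diagonal region $\{|y-x|\leq |x|/2\}$ that produces the slow power, confirm that on it $|y|\simeq|x|$ so both weights contribute clean powers of $|x|$, and carry out the exponent bookkeeping $-p(n-\alpha)-\sigma_2+\alpha = -(p(n-\alpha)-(\alpha-\sigma_2))$. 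Everything else reduces to the routine restrict-and-estimate computation illustrated already in Proposition \ref{slow decay 1}.
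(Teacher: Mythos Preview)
Your proof is correct and follows essentially the same approach as the paper: restrict the first integral to a fixed ball $B_R(0)$ to extract the $(1+|x|)^{-(n-\alpha)}$ bound for both $u$ and $v$, then bootstrap the bound on $u$ into the integral for $v$ over the near-diagonal region $B_{|x|/2}(x)$ to obtain the second exponent. The paper's argument is identical apart from cosmetic choices (it takes $R=1$ and writes the radial integral $\int_0^{|x|/2} t^{\alpha}\,\frac{dt}{t}$ in place of your $\int_{\{|z|\le |x|/2\}}|z|^{-(n-\alpha)}\,dz$).
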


\begin{proof}
For $y \in B_{1}(0)$, we can find a $C>0$ such that 
$$ C \leq \int_{B_{1}(0)} \frac{v(y)^q}{|y|^{\sigma_1}} \,dy, \int_{B_{1}(0)}  \frac{u(y)^p}{|y|^{\sigma_2}} \,dy < \infty. $$
Thus for $x\in B_{1}(0)^C$, we have
\begin{align}\label{lower u}
u(x) \geq {} & \int_{B_{1}(0)} \frac{v(y)^q}{|x-y|^{n-\alpha}|y|^{\sigma_1}} \,dy \notag \\
\geq {} & \frac{C}{(1+|x|)^{n-\alpha}}\int_{B_{1}(0)} \frac{v(y)^q}{|y|^{\sigma_1}}\,dy \geq \frac{C}{(1+|x|)^{n-\alpha}}.
\end{align}
Similarly, we can show 
\begin{equation*}
v(x) \geq \frac{C}{(1+|x|)^{n-\alpha}}.
\end{equation*}
Then, with the help of estimate \eqref{lower u}, we get
\begin{align*}
v(x) \geq {} & \int_{B_{|x|/2}(x)} \frac{u(y)^p}{|x-y|^{n-\alpha}|y|^{\sigma_2}}\,dy \\
\geq {} & \frac{C}{(1+|x|)^{p(n-\alpha)+\sigma_2}}\int_{0}^{|x|/2} t^{\alpha} \,\frac{dt}{t} = \frac{C}{(1+|x|)^{p(n-\alpha)-(\alpha-\sigma_2)}}.
\end{align*}
\end{proof}

\begin{proof}[Proof of Theorem \ref{slow decay}]
This is a direct consequence of Proposition \ref{slow decay 1}(i) and Proposition \ref{slow decay 2}.
\end{proof}

\begin{proof}[Proof of Theorem \ref{Liouville}]
We proceed by contradiction. That is, assume $u,v$ are bounded and decaying positive classical solutions. First, notice that integration by parts implies
\begin{align*}
\int_{B_{R}(0)} {} & \frac{v(x)^q}{|x|^{\sigma_1}} (x\cdot \nabla v(x)) + \frac{u(x)^p}{|x|^{\sigma_2}} (x\cdot \nabla u(x)) \,dx \notag \\
= {} & \frac{1}{1+q}\int_{B_{R}(0)} \frac{x}{|x|^{\sigma_1}}\cdot\nabla (v(x)^{q+1})\,dx  + \frac{1}{1+p}\int_{B_{R}(0)} \frac{x}{|x|^{\sigma_2}}\cdot\nabla (u(x)^{p+1})\,dx \notag \\
= {} & -\frac{n-\sigma_1}{1+q}\int_{B_{R}(0)} \frac{v(x)^{q+1}}{|x|^{\sigma_1}}\,dx - \frac{n-\sigma_2}{1+p}\int_{B_{R}(0)} \frac{u(x)^{p+1}}{|x|^{\sigma_2}}\,dx \notag \\
+ {} & \frac{R}{1+q}\int_{\partial B_{R}(0)} \frac{v(x)^{q+1}}{|x|^{\sigma_1}}\,ds + \frac{R}{1+p}\int_{\partial B_{R}(0)} \frac{u(x)^{p+1}}{|x|^{\sigma_2}}\,ds.
\end{align*}
Note that this identity follows more precisely by integrating on $B_{R}(0)\backslash B_{\varepsilon}(0)$ then sending $\varepsilon \longrightarrow 0$ after the appropriate calculations. Then, by virtue of Proposition \ref{slow decay 1}(ii), we can find a sequence $\lbrace R_j \rbrace$ such that as $R_j \longrightarrow \infty$,
\begin{equation*}
R_{j}\int_{\partial B_{R_j}(0)} \frac{v(x)^{q+1}}{|x|^{\sigma_1}}\,ds, \,R_{j}\int_{\partial B_{R_j}(0)} \frac{u(x)^{p+1}}{|x|^{\sigma_2}}\,ds \longrightarrow 0.
\end{equation*}
Thus, we obtain the identity
\begin{align}\label{intparts sys}
\int_{\mathbb{R}^n} \frac{v(x)^q}{|x|^{\sigma_1}} (x\cdot \nabla v(x)) + {} & \frac{u(x)^p}{|x|^{\sigma_2}} (x\cdot \nabla u(x)) \,dx \notag \\
= {} & -\Bigg\lbrace \frac{n-\sigma_1}{1+q} + \frac{n-\sigma_2}{1+p}\Bigg\rbrace\int_{\mathbb{R}^n} \frac{v(x)^{q+1}}{|x|^{\sigma_1}}\,dx,
\end{align}
where we used the fact that
\begin{equation*}
\int_{\mathbb{R}^n} \frac{u(x)^{p+1}}{|x|^{\sigma_2}}\,dx = \int_{\mathbb{R}^n}\int_{\mathbb{R}^n} \frac{u(x)^{p}v(z)^q}{|x-z|^{n-\alpha}|x|^{\sigma_2}|z|^{\sigma_1}}\,dz dx = \int_{\mathbb{R}^n} \frac{v(x)^{q+1}}{|x|^{\sigma_1}}\,dx.
\end{equation*}
From the first equation with $\lambda \neq 0$, we write
\begin{equation*}
u(\lambda x) = \int_{\mathbb{R}^n} \frac{v(y)^{q}}{|\lambda x - y|^{n-\alpha}|y|^{\sigma_1}}\,dy = \lambda^{\alpha - \sigma_1}\int_{\mathbb{R}^n} \frac{v(\lambda z)^q}{|x-z|^{n-\alpha}|z|^{\sigma_1}}\,dz.
\end{equation*}
Differentiating this rescaled equation with respect to $\lambda$ then taking $\lambda = 1$ gives us
\begin{align}\label{xdotwithu}
x\cdot \nabla u(x) = {} & (\alpha - \sigma_1)\int_{\mathbb{R}^n}\frac{v(z)^q}{|x-z|^{n-\alpha}|z|^{\sigma_1}}\,dz + \int_{\mathbb{R}^n} \frac{q v(z)^{q-1}(z\cdot \nabla v)}{|x-z|^{n-\alpha}|z|^{\sigma_1}}\,dz \notag \\
= {} & (\alpha - \sigma_1)u(x) + \int_{\mathbb{R}^n} \frac{z\cdot \nabla v(z)^{q}}{|x-z|^{n-\alpha}|z|^{\sigma_1}}\,dz ~ (x\neq 0). 
\end{align}
Note that an integration by parts yields
\begin{align*}
\int_{B_{R}(0)} {} & \frac{z\cdot \nabla v(z)^{q}}{|x-z|^{n-\alpha}|z|^{\sigma_1}}\,dz \\
= {} & R\int_{\partial B_{R}(0)} \frac{v(z)^{q}}{|x-z|^{n-\alpha}|z|^{\sigma_1}} \,ds 
- (n-\sigma_1)\int_{B_{R}(0)} \frac{v(z)^{q}}{|x-z|^{n-\alpha}|z|^{\sigma_1}}\,dz \\
{} & - (n-\alpha)\int_{B_{R}(0)} \frac{(z\cdot (x-z))v(z)^{q}}{|x-z|^{n-\alpha + 2}|z|^{\sigma_1}}\,dz.
\end{align*}
By virtue of $\int_{\mathbb{R}^n} \frac{v(y)^{q}}{|x-z|^{n-\alpha}|z|^{\sigma_1}} \,dz < \infty,$ we can find a sequence $\lbrace R_j \rbrace$ such that 
\begin{equation*}
R_{j}\int_{\partial B_{R_j}(0)} \frac{v(z)^{q}}{|x-z|^{n-\alpha}|z|^{\sigma_1}} \,ds  \longrightarrow 0 \,\text{ as }\, R_j \longrightarrow \infty
\end{equation*}
and thus obtain
\begin{align*}
\int_{\mathbb{R}^n} \frac{z\cdot \nabla v(z)^{q}}{|x-z|^{n-\alpha}|z|^{\sigma_1}}\,dz = {} & - (n-\sigma_1)\int_{\mathbb{R}^n} \frac{v(z)^{q}}{|x-z|^{n-\alpha}|z|^{\sigma_1}}\,dz \\
{} & - (n-\alpha)\int_{\mathbb{R}^n} \frac{(z\cdot (x-z))v(z)^{q}}{|x-z|^{n-\alpha + 2}|z|^{\sigma_1}}\,dz.
\end{align*}
Hence, inserting this into \eqref{xdotwithu} yields
\begin{equation}\label{intparts 1}
x\cdot \nabla u(x) = -(n-\alpha)u(x) - (n-\alpha)\int_{\mathbb{R}^n} \frac{(z\cdot (x-z))v(z)^{q}}{|x-z|^{n-\alpha + 2}|z|^{\sigma_1}}\,dz.
\end{equation}
Similar calculations on the second integral equation will lead to 
\begin{equation}\label{intparts 2}
x\cdot \nabla v(x) = -(n - \alpha)v(x) - (n-\alpha)\int_{\mathbb{R}^n} \frac{(z\cdot (x-z))u(z)^{p}}{|x-z|^{n-\alpha + 2}|z|^{\sigma_2}}\,dz.
\end{equation}
Now multiply \eqref{intparts 1} and \eqref{intparts 2} by $|x|^{-\sigma_2}u(x)^{p}$ and $|x|^{-\sigma_1}v(x)^{q}$, respectively, sum the resulting equations together and integrate over $\mathbb{R}^n$ to get
\begin{align*}
\int_{\mathbb{R}^n} \frac{v(x)^{q}}{|x|^{\sigma_1}} {} &  (x\cdot \nabla v(x))\,dx + \int_{\mathbb{R}^n} \frac{u(x)^{p}}{|x|^{\sigma_2}}(x\cdot \nabla u(x))\,dx \notag \\
= {} & -(n-\alpha)\Bigg\lbrace\int_{\mathbb{R}^n} \frac{v(x)^{q+1}}{|x|^{\sigma_1}} + \frac{u(x)^{p+1}}{|x|^{\sigma_2}}\,dx\Bigg\rbrace \notag \\
{} & - (n-\alpha)\int_{\mathbb{R}^n}\int_{\mathbb{R}^n} \frac{(z\cdot (x-z) + x\cdot(z-x))u(z)^{p}v(x)^{q}}{|x-z|^{n-\alpha + 2}|z|^{\sigma_2}|x|^{\sigma_1}}\,dz dx.
\end{align*}
By noticing that $z\cdot(x-z) + x\cdot(z-x) = -|x-z|^2$, we obtain the Pohozaev type identity
\begin{equation}\label{pohozaev sys}
\int_{\mathbb{R}^n} \frac{v(x)^{q}}{|x|^{\sigma_1}}(x\cdot \nabla v(x)) + \frac{u(x)^{p}}{|x|^{\sigma_2}}(x\cdot \nabla u(x))\,dx 
= -(n-\alpha)\int_{\mathbb{R}^n} \frac{v(x)^{q+1}}{|x|^{\sigma_1}}\,dx.
\end{equation}
Inserting \eqref{intparts sys} into \eqref{pohozaev sys} yields
\begin{equation*}
\Bigg\lbrace \frac{n-\sigma_1}{1+q} + \frac{n-\sigma_2}{1+p} - (n-\alpha) \Bigg\rbrace \int_{\mathbb{R}^n} \frac{v(x)^{q+1}}{|x|^{\sigma_1}}\,dx 
= 0,
\end{equation*}
but this contradicts with \eqref{subcritical}. This completes the proof of the theorem.
\end{proof}

\section{Properties of integrable solutions}\label{integrable section}

\subsection{An equivalent form of the weighted HLS inequality}
The following estimate is a consequence of the doubly weighted HLS inequality by duality, and it is the version of the weighted HLS inequality we apply in this paper.
\begin{lemma}
Let $p,q \in (1,\infty)$, $\alpha \in (0,n)$ and $0\leq \sigma_1 + \sigma_2 \leq \alpha$, and define 
$$I_{\alpha} f(x) \doteq \int_{\mathbb{R}^n} \frac{f(y)}{|x|^{\sigma_1}|x-y|^{n-\alpha}|y|^{\sigma_2}}\,dy.$$ 
Then
$$\| I_{\alpha} f\|_{L^{q}(\mathbb{R}^n)} \leq C_{\sigma_{i},p,\alpha,n}\|f\|_{L^{p}(\mathbb{R}^n)},$$
where $\frac{1}{p}-\frac{1}{q} = \frac{\alpha - (\sigma_1 + \sigma_2)}{n}$  and  $\frac{1}{q} - \frac{n-\alpha}{n} < \frac{\sigma_1}{n} < \frac{1}{q}$.
\end{lemma}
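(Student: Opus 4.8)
The plan is to deduce this mapping estimate from the doubly weighted HLS inequality stated in the introduction by a standard duality argument, matching the exponents carefully. First I would let $q' = q/(q-1)$ denote the conjugate exponent of $q$; since $q \in (1,\infty)$ we have $q' \in (1,\infty)$ as well. By the duality characterization of the $L^q$ norm,
$$ \|I_\alpha f\|_{L^q(\mathbb{R}^n)} = \sup\Big\{ \Big| \int_{\mathbb{R}^n} I_\alpha f(x)\, g(x)\,dx \Big| : g \in L^{q'}(\mathbb{R}^n),\ \|g\|_{L^{q'}(\mathbb{R}^n)} = 1 \Big\}, $$
so it suffices to bound the pairing against an arbitrary normalized $g$.

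Next I would fix such a $g$ and use Fubini's theorem to rewrite the pairing as the symmetric double integral
$$ \int_{\mathbb{R}^n} I_\alpha f(x)\, g(x)\,dx = \int_{\mathbb{R}^n}\int_{\mathbb{R}^n} \frac{g(x)\, f(y)}{|x|^{\sigma_1}|x-y|^{n-\alpha}|y|^{\sigma_2}}\,dy\,dx, $$
which is precisely the bilinear form $J(g,f)$ appearing in the doubly weighted HLS inequality, with $g$ carrying the weight $|x|^{-\sigma_1}$ and $f$ carrying the weight $|y|^{-\sigma_2}$. Applying that inequality with the choice $r = q'$ and $s = p$ gives the bound $|J(g,f)| \leq C\,\|g\|_{L^{q'}(\mathbb{R}^n)}\|f\|_{L^{p}(\mathbb{R}^n)}$, and since $\|g\|_{L^{q'}} = 1$, taking the supremum over $g$ yields the desired estimate $\|I_\alpha f\|_{L^q} \leq C\|f\|_{L^p}$.

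The remaining work is purely bookkeeping: verifying that the hypotheses of the doubly weighted HLS inequality are met under the present choice $r = q'$, $s = p$. The scaling relation $\tfrac{1}{r} + \tfrac{1}{s} + \tfrac{\sigma_1+\sigma_2}{n} = \tfrac{n+\alpha}{n}$ becomes, after substituting $\tfrac{1}{q'} = 1 - \tfrac{1}{q}$, exactly the condition $\tfrac{1}{p} - \tfrac{1}{q} = \tfrac{\alpha - (\sigma_1+\sigma_2)}{n}$ assumed in the lemma. Likewise, the admissibility range $\tfrac{\alpha}{n} - \tfrac{1}{r} < \tfrac{\sigma_1}{n} < 1 - \tfrac{1}{r}$ transforms, again via $\tfrac{1}{r} = 1 - \tfrac{1}{q}$, into $\tfrac{1}{q} - \tfrac{n-\alpha}{n} < \tfrac{\sigma_1}{n} < \tfrac{1}{q}$, which is exactly the second hypothesis of the lemma. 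The only genuine technical point to be careful about is justifying the application of Fubini's theorem, which follows by first running the same estimate with $|f|$ and $|g|$ in place of $f$ and $g$ to confirm absolute integrability of the double integral; beyond this, there is no substantive obstacle, the content of the lemma being essentially a restatement of the weighted HLS inequality in its operator (dual) form.
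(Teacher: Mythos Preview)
Your argument is correct and matches the paper's approach: the paper does not give a detailed proof but simply states that the lemma ``is a consequence of the doubly weighted HLS inequality by duality,'' which is exactly the route you carry out. Your verification of the exponent conditions under the substitution $r=q'$, $s=p$ is accurate.
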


\subsection{Integrability of solutions}

\begin{theorem}\label{integrability}
Suppose  $q \geq p > 1$ and $\sigma_1 \geq \sigma_2$. If $u$ and $v$ are positive integrable solutions of \eqref{whls ie}, then $(u,v) \in  L^{r}(\mathbb{R}^n) \times L^{s}(\mathbb{R}^n)$ for each pair $(r,s)$ such that
$$ \frac{1}{r} \in \Big(0,\frac{n-\alpha}{n}\Big) \,\text{ and }\, \frac{1}{s} \in \Big(0,\min\Big\{ \frac{n-\alpha}{n},\frac{p(n-\alpha) - (\alpha-\sigma_2)}{n} \Big\}\Big).$$
\end{theorem}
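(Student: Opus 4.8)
The plan is to run a regularity--lifting bootstrap built on the weighted Hardy--Littlewood--Sobolev (HLS) estimate of the Lemma above. Writing $T_\sigma g(x) = \int_{\mathbb{R}^n} g(y)\,|x-y|^{-(n-\alpha)}|y|^{-\sigma}\,dy$, the two equations in \eqref{whls ie} read $u = T_{\sigma_1}(v^q)$ and $v = T_{\sigma_2}(u^p)$, and $T_\sigma$ is exactly the operator of the Lemma with outer weight $0$ and inner weight $\sigma$. Applying the Lemma to each equation furnishes the exponent relations
$$ \frac1r = \frac qs - \frac{\alpha-\sigma_1}{n} \quad\text{and}\quad \frac1s = \frac pr - \frac{\alpha-\sigma_2}{n}, $$
valid whenever the input powers lie in admissible Lebesgue spaces and the target exponents obey the two-sided range condition of the Lemma, which (because the outer weight is $0$) reduces to $\tfrac1r,\tfrac1s \in (0,\tfrac{n-\alpha}{n})$. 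Composing the two relations produces the self-map $\tfrac1r \mapsto pq\,\tfrac1r - \tfrac{q(\alpha-\sigma_2)+(\alpha-\sigma_1)}{n}$, whose unique fixed point is precisely $\tfrac1r = q_0/n = 1/r_0$. Since $pq>1$ this fixed point is \emph{repelling}, so naive iteration cannot reach the given integrability; the first improvement off $L^{r_0}\times L^{s_0}$ must come from a separate device.

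First I would obtain that initial improvement by a truncation/contraction argument. Use the identity $u = T_{\sigma_1}\!\big(v^{q-1}\,T_{\sigma_2}(u^{p-1}u)\big)$ to write $u = Lu$, where $L$ is the \emph{linear} operator $Lw = T_{\sigma_1}\!\big(v^{q-1}\,T_{\sigma_2}(u^{p-1}w)\big)$ with coefficient functions $v^{q-1}, u^{p-1}$ built from the fixed solution. Split each coefficient into a ``high-and-far'' part supported on $\{v>A\}\cup\{|x|>A\}$ (respectively for $u$) and a bounded, compactly supported remainder, giving $L = L_A + R_A$. Estimating $L_A$ by the Lemma together with H\"older's inequality, its operator norm on $L^r$ is controlled by the critical norms of the high-and-far parts of $v^{q-1}$ and $u^{p-1}$, which tend to $0$ as $A\to\infty$ because $v\in L^{s_0}$, $u\in L^{r_0}$; hence $L_A$ is a contraction on $L^r$ for $r$ in a range about $r_0$, while $R_A u$ lies in every such $L^r$. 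The regularity-lifting lemma (contraction on two spaces, source term in their intersection) then upgrades $u\in L^{r_0}$ to $u\in L^r$ on an open interval of exponents straddling $r_0$, and symmetrically for $v$.

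With an open interval in hand, I would spread it to the full stated ranges using the two HLS relations. Iterating toward higher integrability ($\tfrac1r\to 0^+$) terminates in finitely many steps precisely because the composed self-map is expanding; at each step one only checks that the intermediate exponents keep satisfying the range condition and that all input powers live in spaces with exponent in $(1,\infty)$. Pushing the other way, $\tfrac1r = \tfrac qs - \tfrac{\alpha-\sigma_1}{n}$ lets $u$ approach the intrinsic HLS endpoint $\tfrac1r\to\tfrac{n-\alpha}{n}$, while feeding $u$'s maximal range through $\tfrac1s = \tfrac pr - \tfrac{\alpha-\sigma_2}{n}$ lets $v$ reach only up to $\tfrac1s\to\tfrac{p(n-\alpha)-(\alpha-\sigma_2)}{n}$; intersecting with the direct bound $\tfrac{n-\alpha}{n}$ yields the minimum in the statement. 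The hypotheses $q\ge p$ and $\sigma_1\ge\sigma_2$ are exactly what make the $v$-side binding, so $u$ attains all of $(0,\tfrac{n-\alpha}{n})$ while $v$ carries the extra constraint.

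The main obstacle is this initial lift off the exact starting exponents: because the exponent self-map is repelling, the bootstrap alone cannot start, and one genuinely needs the smallness of the high-and-far tails of $v^{q-1}$ and $u^{p-1}$ to turn $L$ into a contraction on a space other than $L^{r_0}\times L^{s_0}$. A secondary, purely bookkeeping, difficulty is to verify at every stage of the iteration that the two-sided range hypothesis of the weighted HLS Lemma continues to hold and that all intermediate Lebesgue exponents remain in $(1,\infty)$, so that the Lemma may legitimately be invoked throughout.
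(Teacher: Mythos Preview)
Your approach is essentially the paper's: a truncation/contraction regularity-lifting step to escape the repelling fixed point $(1/r_0,1/s_0)$, followed by weighted HLS bootstraps to reach the full range. The paper cites precisely the Jin--Li regularity-lifting lemma for the first step and then finishes with direct HLS applications.

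The one execution difference worth noting is that the paper works with the \emph{vector} operator $T(f,g)=(T_1 g,T_2 f)$ on $L^r\times L^s$ along the diagonal $\tfrac1r-\tfrac1s=\tfrac1{r_0}-\tfrac1{s_0}$, and because the H\"older split always measures the truncated coefficients in the fixed critical norms $\|u_A\|_{r_0}^{p-1}$, $\|v_A\|_{s_0}^{q-1}$, the contraction holds uniformly for every $\tfrac1r\in\big(\tfrac1{r_0}-\tfrac1{s_0},\,\tfrac{n-\alpha}{n}\big)$. Thus Step~1 already delivers the entire upper portion of $u$'s range in one shot, and Step~2 needs only a single HLS application (plus the arithmetic check that $q\big(\tfrac{n-\alpha}{n}-a+b\big)-\tfrac{\alpha-\sigma_1}{n}>a-b$, which uses the non-subcritical condition) to fill in $\tfrac1r\in(0,a-b]$. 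Your composed scalar operator and two-directional iteration reach the same conclusion but with more steps and more range-checking bookkeeping; the paper's setup simply avoids the upward iteration altogether.
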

\begin{proof}
\noindent{\it Step 1:} Establish an initial interval of integrability.

Set $a = \frac{1}{r_0} = \frac{\alpha(1 + q) - (\sigma_1 + \sigma_2 q)}{n(pq-1)}$, $b = \frac{1}{s_0} = \frac{\alpha(1 + p) - (\sigma_2 + \sigma_1 p)}{n(pq-1)}$ and let $\frac{1}{r} \in (a - b, \frac{n-\alpha}{n})$ and $\frac{1}{s} \in (0,\frac{n-\alpha}{n} - a + b)$ such that
$$\frac{1}{r} - \frac{1}{s} = \frac{1}{r_0} - \frac{1}{s_0}.$$
Thus, we have
\begin{equation}\label{indices}
\frac{1}{r} + \frac{\alpha-\sigma_1}{n} = \frac{q-1}{s_0} + \frac{1}{s} \,\text{ and }\, \frac{1}{s} + \frac{\alpha-\sigma_2}{n} = \frac{p-1}{r_0} + \frac{1}{r}. 
\end{equation}
Let $A>0$ and define $u_A = u$ if $u > A$ or $|x| > A$, $u_A = 0$ if $u\leq A$ and $|x| \leq A$; we give $v_A$ the analogous definition. Consider the integral operator $T = (T_1,T_2)$ where
$$ T_{1}g(x) = \int_{\mathbb{R}^n} \frac{v_{A}(y)^{q-1}g(y)}{|x-y|^{n-\alpha}|y|^{\sigma_1}}\,dy \,\text{ and }\, T_{2}f(x) = \int_{\mathbb{R}^n} \frac{u_{A}(y)^{p-1}f(y)}{|x-y|^{n-\alpha}|y|^{\sigma_2}}\,dy$$
for $f \in L^{r}(\mathbb{R}^n)$ and $g\in L^{s}(\mathbb{R}^n)$. By virtue of \eqref{indices}, applying the weighted Hardy--Littlewood--Sobolev inequality followed by H\"{o}lder's inequality gives us
\begin{align*}
\|T_{1}g\|_{L^r} \leq C\|v_{A}^{q-1}g\|_{\frac{nr}{n+r(\alpha-\sigma_1)}} \leq C\|v_A\|_{s_0}^{q-1}\|g\|_{s}, \\
\|T_{2}f\|_{L^s} \leq C\|u_{A}^{p-1}f\|_{\frac{ns}{n+s(\alpha-\sigma_2)}} \leq C\|u_A\|_{r_0}^{p-1}\|f\|_{r}.
\end{align*}
We may choose $A$ sufficiently large so that 
$$C\|v_A\|_{s_0}^{q-1},~ C\|u_A\|_{r_0}^{p-1} \leq 1/2$$
and the operator $T(f,g) = (T_{1}g,T_{2}f)$, equipped with the norm 
$$\|(f_1,f_2)\|_{L^{r}(\mathbb{R}^n)\times L^{s}(\mathbb{R}^n)} = \|f_1\|_r + \|f_2\|_{s},$$ 
is a contraction map from $L^{r}(\mathbb{R}^n)\times L^{s}(\mathbb{R}^n)$ to itself
$$ \text{ for all }\, (\frac{1}{r},\frac{1}{s}) \in I \doteq (a-b,\frac{n-\alpha}{n})\times (0,\frac{n-\alpha}{n} - a + b).$$
Thus, $T$ is also a contraction map from $L^{r_0}(\mathbb{R}^n)\times L^{s_0}(\mathbb{R}^n)$ to itself since $(\frac{1}{r_0},\frac{1}{s_0}) \in I$. Now define
$$ F = \int_{\mathbb{R}^n} \frac{(v - v_A)^q(y)}{|x-y|^{n-\alpha}|y|^{\sigma_1}}\,dy \,\text{ and }\, G = \int_{\mathbb{R}^n} \frac{(u - u_A)^p(y)}{|x-y|^{n-\alpha}|y|^{\sigma_2}}\,dy. $$ 
Then the weighted HLS inequality implies that $(F,G) \in L^{r}(\mathbb{R}^n)\times L^{s}(\mathbb{R}^n)$ and since $(u,v)$ satisfies
$$(f,g) = T(f,g) + (F,G),$$
applying Lemma 2.1 from \cite{JL05} yields
\begin{equation}\label{small interval}
(u,v) \in L^{r}(\mathbb{R}^n)\times L^{s}(\mathbb{R}^n) \,\text{ for all }\, (\frac{1}{r},\frac{1}{s}) \in I.
\end{equation}

\noindent{\it Step 2:} Extend the interval of integrability $I$.

First, we claim that
\begin{equation}\label{inequality}
q\Big\lbrace \frac{n-\alpha}{n} - a + b\Big\rbrace - \frac{\alpha - \sigma_1}{n} > a-b.
\end{equation}
If this holds true, we can then apply the weighted HLS inequality to obtain
$$ \|u\|_{r} \leq C\|v^q\|_{\frac{nr}{n+r(\alpha-\sigma_1)}} \leq C\|v\|_{\frac{nrq}{n+r(\alpha-\sigma_1)}}^q. $$
Then, since $v\in L^{s}(\mathbb{R}^n)$ for all $\frac{1}{s} \in (0,\frac{n-\alpha}{n} - a + b)$, we obtain that $u \in L^{r}(\mathbb{R}^n)$ for $\frac{1}{r} \in (0, q\lbrace \frac{n-\alpha}{n} - a + b\rbrace - \frac{\alpha - \sigma_1}{n} )$, where we are using \eqref{inequality} for the last interval to make sense. Combining this with \eqref{small interval} yields
\begin{equation}\label{r}
u \in L^{r}(\mathbb{R}^n) \,\text{ for all }\, \frac{1}{r} \in \Big(0,\frac{n-\alpha}{n}\Big).
\end{equation}
Likewise, the weighted HLS inequality implies
$$\|v\|_{s} \leq C\|u\|_{\frac{nsp}{n+s(\alpha-\sigma_2)}}^p.$$
Thus, combining this with \eqref{r} yields
$$ v \in L^{s}(\mathbb{R}^n) \,\text{ for all }\, \frac{1}{s} \in \Big(0,\min\Big\lbrace \frac{n-\alpha}{n},\frac{p(n-\alpha) - (\alpha-\sigma_2)}{n} \Big\rbrace\Big). $$ 
It remains to verify the claim \eqref{inequality}. To do so, notice that \eqref{non-subcritical0} implies that
\begin{align*}
q\Big\lbrace \frac{n-\alpha}{n} {} & - a + b \Big\rbrace - \frac{ \alpha - \sigma_1}{n} > qb - \frac{ \alpha - \sigma_1}{n} = \frac{\alpha q - \sigma_2 q + \alpha - \sigma_1 }{n(pq-1)} \\
= {} & \frac{\alpha (q-p) + \alpha p - \sigma_2 q + \alpha - \sigma_1 }{n(pq-1)} >  \frac{\alpha (q-p) + \sigma_1 p - \sigma_2 q + \sigma_2 - \sigma_1 }{n(pq-1)} \\
= {} &  \frac{\alpha (q-p) - \sigma_1(1-p) - \sigma_2 (q-1) }{n(pq-1)} = a - b.
\end{align*}
This completes the proof.
\end{proof}

\begin{remark}
Actually, it is not too difficult to show that the interval of integrability of Theorem \ref{integrability} is optimal. That is, $\|u\|_{r} = \infty$ and $\|v\|_{s} = \infty$ at the endpoints
$$ r = \frac{n}{n-\alpha} \,\text{ and }\, s = \max\Big\{ \frac{n}{n-\alpha},\frac{n}{p(n-\alpha) - (\alpha-\sigma_2)} \Big\}. $$
\end{remark}

\subsection{Integrable solutions are ground states}

\begin{theorem}
If $u,v$ are positive integrable solutions of \eqref{whls ie}, then $u,v$ are bounded and converge to zero as $|x|\longrightarrow \infty$.
\end{theorem}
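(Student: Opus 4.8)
The plan is to prove the two assertions—boundedness and decay—for $u$; the statement for $v$ then follows by the symmetric argument with the roles of $p,q$ and $\sigma_1,\sigma_2$ interchanged. The whole argument rests on the wide integrability range furnished by Theorem \ref{integrability}: $u \in L^{r}(\mathbb{R}^n)$ for every $\frac1r \in (0,\frac{n-\alpha}{n})$ and $v \in L^{s}(\mathbb{R}^n)$ for every $\frac1s$ in the corresponding interval. The key observation is that, setting $h_1(y) = v(y)^q |y|^{-\sigma_1}$, the first equation of \eqref{whls ie} reads $u(x) = \int_{\mathbb{R}^n} h_1(y)\,|x-y|^{-(n-\alpha)}\,dy$, a pure Riesz potential of $h_1$. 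Thus it suffices to show that $h_1 \in L^{t_1}(\mathbb{R}^n)\cap L^{t_2}(\mathbb{R}^n)$ for exponents straddling the critical value, $1 \le t_1 < \frac{n}{\alpha} < t_2$; both conclusions then follow from elementary potential estimates.

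For boundedness I would split the defining integral at the moving singularity, $u(x) = \int_{|x-y|\le 1} + \int_{|x-y|>1}$. On the near piece, H\"older's inequality against the large exponent $t_2$ controls the integral by $\|h_1\|_{L^{t_2}(B_1(x))}$ times the $L^{t_2'}$-norm of $|z|^{-(n-\alpha)}$ over the unit ball, which is finite precisely because $t_2 > \frac{n}{\alpha}$ forces $(n-\alpha)t_2' < n$; the origin weight is harmless here, being either bounded (when $|x|$ is large) or absorbed by a local H\"older estimate (when $|x|$ is small). On the far piece the kernel is bounded by $1$, and H\"older against $t_1 < \frac{n}{\alpha}$ bounds it by $\|h_1\|_{L^{t_1}}$ times the $L^{t_1'}$-norm of $|z|^{-(n-\alpha)}$ over $\{|z|>1\}$, finite because $t_1 < \frac{n}{\alpha}$ gives $(n-\alpha)t_1' > n$. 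Summing yields a bound independent of $x$.

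For decay as $|x|\to\infty$ I would use the same decomposition. The near piece tends to $0$ because $B_1(x)$ escapes to infinity, so $\|h_1\|_{L^{t_2}(B_1(x))} \to 0$ as a tail of an $L^{t_2}$ function. The far piece is handled by an $\varepsilon$-argument: fixing $K$ so that $\|h_1\|_{L^{t_1}(\{|y|>K\})}$ is small, the contribution of $\{|y|>K\}$ is uniformly small by the far estimate above, while the contribution of $\{|y|\le K\}$ is dominated by $(|x|-K)^{-(n-\alpha)}\,\|h_1\|_{L^{1}(B_K)} \to 0$ (here $L^{t_1}(B_K)\subset L^1(B_K)$ since $B_K$ has finite measure). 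Letting $\varepsilon\to 0$ gives $u(x)\to 0$, and the same scheme applied to $h_2(y) = u(y)^p|y|^{-\sigma_2}$ settles $v$.

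The main obstacle is establishing the membership $h_1 \in L^{t_1}$ for some $t_1 < \frac{n}{\alpha}$; the high-exponent membership $h_1 \in L^{t_2}$ is routine, since $v \in L^{s}$ for all large $s$. Near the origin there is no difficulty, as $\sigma_1 < \alpha$ keeps $|y|^{-\sigma_1}$ locally integrable to any power below $\frac{n}{\sigma_1}>\frac{n}{\alpha}$; the delicate point is the integrability of $h_1 = v^q|\cdot|^{-\sigma_1}$ at infinity, which requires $v^q$ to be integrable against a power below $\frac{n}{\alpha}$. This is exactly where the sharp lower endpoint of the integrability interval from Theorem \ref{integrability}—and, when $\sigma_1>0$, the additional decay contributed by the weight $|y|^{-\sigma_1}$—must be invoked in order to choose admissible H\"older exponents. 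Verifying that such exponents indeed exist under the standing assumptions $q\ge p>1$ and $\sigma_1\ge\sigma_2$ is the crux of the argument.
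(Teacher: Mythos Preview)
Your strategy is viable but differs from the paper's, and the point you flag as the crux---showing $h_1=v^q|\cdot|^{-\sigma_1}\in L^{t_1}(\mathbb{R}^n)$ for some $t_1<n/\alpha$---is genuinely nontrivial. It is essentially Proposition~\ref{fast decay of u}(i) (the statement $h_1\in L^1$), whose proof in the paper requires the non-subcritical condition~\eqref{non-subcritical0} and a case split on the sign of $p(n-\alpha)+\sigma_2-n$; the standing assumptions $q\ge p>1$, $\sigma_1\ge\sigma_2$ alone do not obviously suffice. So your plan works only after importing that later computation, and the ``crux'' you leave open is not a formality.

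The paper sidesteps this obstacle entirely by a self-referential averaging trick. It rewrites $u(x)\simeq\int_0^\infty t^{-(n-\alpha)}\big(\int_{B_t(x)}h_1\big)\frac{dt}{t}$ and splits at a fixed scale $t=c$. The small-$t$ piece $I_1$ is controlled by H\"older using only $v\in L^s$ for \emph{large} $s$, which is already available from Step~1 of Theorem~\ref{integrability} and needs no low-exponent information. For the large-$t$ piece $I_2$ the paper observes that $B_t(x)\subset B_{t+\delta}(z)$ for $z\in B_\delta(x)$, yielding $I_2\le Cu(z)$ pointwise for every nearby $z$; averaging over $z\in B_\delta(x)$ and invoking the defining membership $u\in L^{r_0}$ gives the uniform bound. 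The same shift produces $u(x)\le\varepsilon+Cu(z)$, and $\int_{B_\delta(x)}u^{r_0}\to 0$ as the ball escapes to infinity handles the decay. Thus the paper never needs $h_1\in L^{t_1}$ for any $t_1<n/\alpha$; your route is more direct once that membership is secured, but the paper's is self-contained at this stage of the argument.
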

\begin{proof}
We prove this in two steps. The first step shows the boundedness of integrable solutions and the second step verifies the decay property.

\noindent{\it Part 1:} $u$ and $v$ are in $L^{\infty}(\mathbb{R}^n)$.

By exchanging the order of integration and choosing a suitably small $c>0$, we can write
\begin{align*}
u(x) \leq {} & C\Big( \int_{0}^{c} \frac{\int_{B_{t}(x)} |y|^{-\sigma_1}v(y)^q \,dy}{t^{n-\alpha}} \,\frac{dt}{t} +  \int_{c}^{\infty} \frac{\int_{B_{t}(x)} |y|^{-\sigma_1}v(y)^q \,dy}{t^{n-\alpha}} \,\frac{dt}{t} \Big) \\
\doteq {} & I_1 + I_2.
\end{align*} 
(i) We estimate $I_1$ first and assume $|x| > 1$, since the case where $|x|\leq 1$ can be treated similarly. Then H\"{o}lder's inequality yields
\begin{equation*}
\int_{B_{t}(x)}  \frac{v(y)^q}{|y|^{\sigma_1}} \,dy \leq Ct^{-\sigma_1} |B_{t}(x)|^{1-1/\ell}\|v^q\|_{\ell}
\end{equation*}
for $\ell > 1$. Then choose $\ell$ suitably large so that $\varepsilon q = 1/\ell$ is sufficiently small and so $v^q \in L^{\ell}(\mathbb{R}^n)$ as a result of Theorem \ref{integrability}. Thus,
\begin{equation*}
I_1 \leq C\|v^q\|_{\ell}\int_{0}^{c} \frac{|B_{t}(x)|^{1-\varepsilon q}}{t^{n-\alpha+\sigma_1}} \,\frac{dt}{t} \leq C\int_{0}^{c} t^{\alpha -\sigma_1 - nq\varepsilon} \,\frac{dt}{t} < \infty.
\end{equation*}
(ii) If $z \in B_{\delta}(x)$, then $B_{t}(x) \subset B_{t+\delta}(z)$. From this, observe that for $\delta \in (0,1)$ and $z \in B_{\delta}(x)$,
\begin{align*}
I_2 \leq {} & C\int_{c}^{\infty} \frac{\int_{B_{t}(x)} |y|^{-\sigma_1} v(y)^q\,dy}{t^{n-\alpha}} \,\frac{dt}{t} \\
\leq {} & C \int_{c}^{\infty} \frac{\int_{B_{t+\delta}(z)} |y|^{-\sigma_1} v(y)^q \,dy}{(t+\delta)^{n-\alpha}}\Big(\frac{t+\delta}{t}\Big)^{n-\alpha+1} \,\frac{d(t+\delta)}{t+\delta} \\
\leq {} & C(1+\delta)^{n-\alpha + 1}\int_{c+\delta}^{\infty} \frac{\int_{B_{t}(z)} |y|^{-\sigma_1} v(y)^q \,dy}{t^{n-\alpha}}\,\frac{dt}{t} \leq Cu(z).
\end{align*}
These estimates for $I_1$ and $I_2$ yield
$$ u(x) \leq C_{1} + C_{2}u(z) \,\text{ for }\, z \in B_{\delta}(x).$$
Integrating this estimate on $B_{\delta}(x)$ then applying H\"{o}lder's inequality gives us
\begin{align*}
u(x)\leq C_{1} + C_{2}|B_{\delta}(x)|^{-1}\int_{B_{\delta}(x)} u(z) \,dz \leq C_{1} + C_{2}|B_{\delta}(x)|^{-1/r_0}\|u\|_{r_0} < \infty.
\end{align*}
Hence, $u$ is bounded in $\mathbb{R}^n$. Using similar calculations, we can also show $v$ is bounded in $\mathbb{R}^n$.

\noindent{\it Part 2:} $u(x),v(x) \longrightarrow 0$ as $|x|\longrightarrow \infty$.

Choose $x \in \mathbb{R}^n$. Then for each $\varepsilon > 0$, there exists a sufficiently small $\delta > 0$ such that
\begin{equation*}
\int_{0}^{\delta} \frac{\int_{B_{t}(x)} |y|^{-\sigma_1}v(y)^q \,dy}{t^{n-\alpha}} \,\frac{dt}{t} \leq C\|v\|_{\infty}^q \int_{0}^{\delta} t^{\alpha-\sigma_1}\,
\frac{dt}{t} < \varepsilon.
\end{equation*} 
Likewise, for $|x-z|< \delta$, we have
\begin{align*}
\int_{\delta}^{\infty} {} & \frac{\int_{B_{t}(x)} |y|^{-\sigma_1}v(y)^q \,dy}{t^{n-\alpha}} \,\frac{dt}{t} \\
\leq {} & \int_{\delta}^{\infty} \frac{\int_{B_{t+\delta}(z)} |y|^{-\sigma_1}v(y)^q \,dy}{(t+\delta)^{n-\alpha}} \Big( \frac{t+\delta}{t} \Big)^{n-\alpha + 1} \,\frac{d(t+\delta)}{t+\delta} \\
\leq {} & C\int_{0}^{\infty} \frac{\int_{B_{t}(z)} |y|^{-\sigma_1} v(y)^q \,dy}{t^{n-\alpha}} \,\frac{dt}{t} = Cu(z).
\end{align*}
Therefore, the last two estimates imply
\begin{equation*}
u(x) \leq \varepsilon + Cu(z) \,\text{ for }\, z \in B_{\delta}(x),
\end{equation*}
and since $u \in L^{r_0}(\mathbb{R}^n)$, we obtain 
\begin{align*}
u(x)^{r_0} = \frac{1}{|B_{\delta}(x)|}\int_{B_{\delta}(x)} u(x)^{r_0} \,dz \leq C_{1}\varepsilon^{r_0} + C_{2}\frac{1}{|B_{\delta}(x)|}\int_{B_{\delta}(x)} u(z)^{r_0} \,dz \longrightarrow 0 
\end{align*}
as $|x| \longrightarrow \infty$ and $\varepsilon \longrightarrow 0$. Hence, $\lim_{|x|\longrightarrow \infty} u(x) = 0$, and similar calculations will show $\lim_{|x|\longrightarrow \infty} v(x) = 0$. This completes the proof.
\end{proof}

\subsection{Radial symmetry and monotonicity}

\begin{theorem}\label{radial symmetry}
Let $u,v$ be positive integrable solutions of \eqref{whls ie}. Then $u$ and $v$ are radially symmetric and monotone decreasing about the origin.
\end{theorem}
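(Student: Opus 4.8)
The plan is to apply the method of moving planes in integral form, in the spirit of Chen--Li--Ou, adapted to the singular weights $|y|^{-\sigma_i}$. Since the weights are radial about the origin, I expect the reflection planes to stop exactly at the origin, forcing symmetry about it. Fix an arbitrary direction, say $x_1$, and for $\lambda \in \mathbb{R}$ set $T_\lambda = \{x_1 = \lambda\}$, $\Sigma_\lambda = \{x_1 < \lambda\}$, the reflection $x^\lambda = (2\lambda - x_1, x_2, \dots, x_n)$, and $u_\lambda(x) = u(x^\lambda)$, $v_\lambda(x) = v(x^\lambda)$.

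First I would derive the comparison identity. Splitting the defining integral for $u$ over $\Sigma_\lambda$ and its complement and changing variables $y \mapsto y^\lambda$ on the latter, and using $|x^\lambda - y^\lambda| = |x-y|$ and $|x^\lambda - y| = |x - y^\lambda|$, one obtains for $x \in \Sigma_\lambda$
\[
u(x) - u_\lambda(x) = \int_{\Sigma_\lambda} \Big( \frac{1}{|x-y|^{n-\alpha}} - \frac{1}{|x-y^\lambda|^{n-\alpha}} \Big) \Big( \frac{v(y)^q}{|y|^{\sigma_1}} - \frac{v_\lambda(y)^q}{|y^\lambda|^{\sigma_1}} \Big)\, dy,
\]
with the analogous identity for $v - v_\lambda$. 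For $x, y \in \Sigma_\lambda$ the first factor is nonnegative. The crucial point is the second factor: writing it as $|y|^{-\sigma_1}(v(y)^q - v_\lambda(y)^q) + v_\lambda(y)^q(|y|^{-\sigma_1} - |y^\lambda|^{-\sigma_1})$ and noting $|y|^2 - |y^\lambda|^2 = 4\lambda(y_1 - \lambda)$, one sees that for $\lambda \le 0$ and $y \in \Sigma_\lambda$ we have $|y| \ge |y^\lambda|$, so the weight term has the favorable sign. Thus on the set where $v \le v_\lambda$ the second factor is $\le 0$, and the weight cooperates with the moving plane scheme rather than obstructing it.

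Next I would start the procedure from $\lambda \to -\infty$. On $\Sigma_\lambda^u = \{x \in \Sigma_\lambda : u > u_\lambda\}$ and $\Sigma_\lambda^v = \{x \in \Sigma_\lambda : v > v_\lambda\}$, the identity together with the elementary bound $v^q - v_\lambda^q \le q\,v^{q-1}(v - v_\lambda)$ (valid where $v > v_\lambda$) gives
\[
0 < u(x) - u_\lambda(x) \le q \int_{\Sigma_\lambda^v} \frac{v(y)^{q-1}(v(y) - v_\lambda(y))}{|x-y|^{n-\alpha}|y|^{\sigma_1}}\, dy \quad \text{on } \Sigma_\lambda^u,
\]
and symmetrically for $v - v_\lambda$. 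Applying the weighted HLS inequality followed by H\"older's inequality then yields
\[
\|u - u_\lambda\|_{L^{r_0}(\Sigma_\lambda^u)} \le C \|v\|_{L^{s_0}(\Sigma_\lambda^v)}^{q-1}\, \|v - v_\lambda\|_{L^{s_0}(\Sigma_\lambda^v)} \le C \|v\|_{L^{s_0}(\Sigma_\lambda^v)}^{q-1}\|u\|_{L^{r_0}(\Sigma_\lambda^u)}^{p-1}\|u - u_\lambda\|_{L^{r_0}(\Sigma_\lambda^u)}.
\]
Because $u \in L^{r_0}$ and $v \in L^{s_0}$ globally (the integrability hypothesis, together with Theorem \ref{integrability} to guarantee the exponents are admissible in the HLS lemma), the prefactor $\|v\|_{L^{s_0}(\Sigma_\lambda^v)}^{q-1}\|u\|_{L^{r_0}(\Sigma_\lambda^u)}^{p-1} \to 0$ as $\lambda \to -\infty$. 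Hence for $\lambda$ sufficiently negative the combined constant is $< 1$, forcing $\|u - u_\lambda\|_{L^{r_0}(\Sigma_\lambda^u)} = \|v - v_\lambda\|_{L^{s_0}(\Sigma_\lambda^v)} = 0$, so $\Sigma_\lambda^u, \Sigma_\lambda^v$ are null and $u \le u_\lambda$, $v \le v_\lambda$ on $\Sigma_\lambda$.

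Finally I would move the plane rightward and set $\lambda_0 = \sup\{\lambda \le 0 : u \le u_\mu,\, v \le v_\mu \text{ in } \Sigma_\mu \text{ for all } \mu \le \lambda\}$. The same small-measure estimate shows the inequalities persist in the limit and, if $\lambda_0 < 0$, that the plane can be pushed slightly further, contradicting the definition unless equality $u \equiv u_{\lambda_0}$, $v \equiv v_{\lambda_0}$ holds on $\Sigma_{\lambda_0}$; but since the first factor in the identity is strictly positive in the interior, equality would force $|y|^{\sigma_1} = |y^{\lambda_0}|^{\sigma_1}$ wherever $v > 0$, which is impossible for $\lambda_0 < 0$. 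Therefore $\lambda_0 = 0$, giving $u(x) \le u(x^0)$ for $x_1 < 0$; running the identical argument from $\lambda \to +\infty$ gives the reverse inequality, so $u$ and $v$ are symmetric about $T_0$. Since the direction $x_1$ was arbitrary, $u$ and $v$ are symmetric about every hyperplane through the origin, hence radial, and monotone decreasing in $r = |x|$ follows from the strict moving-plane inequalities for $\lambda \in (-\infty, 0)$. I expect the main obstacle to be exactly the bookkeeping of the weight difference term --- verifying its favorable sign on the relevant half-space and ruling out a stopping plane $\lambda_0 \ne 0$ --- since this is precisely the feature absent from the unweighted problem and the mechanism that pins the center of symmetry to the origin.
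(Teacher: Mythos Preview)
Your proposal is correct and follows essentially the same route as the paper: the integral form of the method of moving planes of Chen--Li--Ou, with the weighted HLS inequality and H\"older's inequality supplying the contraction estimate, and the sign of the weight-difference term $|y|^{-\sigma_i} - |y^\lambda|^{-\sigma_i}$ both enabling the starting step and pinning the limiting plane to the origin. The only cosmetic differences are conventions --- you take $\Sigma_\lambda = \{x_1 < \lambda\}$ whereas the paper takes $\{x_1 \ge \lambda\}$, and you close by running the argument from both directions while the paper instead argues that equality at $\lambda_0$ together with the strictly signed weight term forces $\lambda_0 = 0$ directly --- but the substance is identical.
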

We employ the integral form of the method of moving planes (cf. \cite{CLO05,CLO06}) to prove this result, but first, we introduce some preliminary tools. For $\lambda \in\mathbb{R}$, define 
$$\Sigma_{\lambda} \doteq \{ x = (x_1,\ldots,x_n) \,|\, x_{1} \geq \lambda \},$$
let $x^{\lambda} = (2\lambda-x_1,x_2,\ldots,x_n)$ be the reflection of $x$ across the plane $\Gamma_{\lambda} \doteq \{ x_{1} = \lambda \}$ and let $u_{\lambda}(x) = u(x^{\lambda})$ and $v_{\lambda}(x) = v(x^{\lambda})$.

\begin{lemma}\label{lemma mp}
There holds
\begin{align*}
u_{\lambda}(x) - u(x) = {} & \int_{\Sigma_{\lambda}} \Big( \frac{1}{|x-y|^{n-\alpha}} - \frac{1}{|x^{\lambda} -y|^{n-\alpha}} \Big) \frac{1}{|y^{\lambda}|^{\sigma_1}}(v_{\lambda}(y)^q - v(y)^q)\,dy \\
- {} & \int_{\Sigma_{\lambda}} \Big( \frac{1}{|x-y|^{n-\alpha}} - \frac{1}{|x^{\lambda} -y|^{n-\alpha}} \Big) \Big(\frac{1}{|y|^{\sigma_1}} - \frac{1}{|y^{\lambda}|^{\sigma_1}}  \Big) v(y)^q\,dy; \\
v_{\lambda}(x) - v(x) = {} & \int_{\Sigma_{\lambda}} \Big( \frac{1}{|x-y|^{n-\alpha}} - \frac{1}{|x^{\lambda} -y|^{n-\alpha}} \Big) \frac{1}{|y^{\lambda}|^{\sigma_2}}(u_{\lambda}(y)^p - u(y)^p)\,dy \\
- {} & \int_{\Sigma_{\lambda}} \Big( \frac{1}{|x-y|^{n-\alpha}} - \frac{1}{|x^{\lambda} -y|^{n-\alpha}} \Big) \Big(\frac{1}{|y|^{\sigma_2}} - \frac{1}{|y^{\lambda}|^{\sigma_2}}  \Big) u(y)^p\,dy.
\end{align*}
\end{lemma}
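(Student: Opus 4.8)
The plan is to derive both identities by direct computation from the defining integral equations, using only two elementary facts: the reflection $y \mapsto y^{\lambda}$ across $\Gamma_{\lambda}$ is an isometry of $\mathbb{R}^n$ with unit Jacobian, and it maps the half-space $\Sigma_{\lambda}^{c} = \{y_1 < \lambda\}$ onto $\Sigma_{\lambda}$. I will carry out the argument for $u$; the computation for $v$ is identical after exchanging the roles of $(p,\sigma_2,u)$ and $(q,\sigma_1,v)$.

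First I would split the defining integral for $u(x)$ over $\Sigma_{\lambda}$ and $\Sigma_{\lambda}^{c}$, then apply the change of variables $y \mapsto y^{\lambda}$ to the piece over $\Sigma_{\lambda}^{c}$. The geometric relations needed here are the distance identity $|x - y^{\lambda}| = |x^{\lambda} - y|$ (immediate since reflection preserves distances) together with $|y^{\lambda}| = |y^{\lambda}|$ and $v(y^{\lambda}) = v_{\lambda}(y)$. This folds the second piece back onto $\Sigma_{\lambda}$ and produces the symmetrized representation
\[
u(x) = \int_{\Sigma_{\lambda}}\frac{v(y)^q}{|x-y|^{n-\alpha}|y|^{\sigma_1}}\,dy + \int_{\Sigma_{\lambda}}\frac{v_{\lambda}(y)^q}{|x^{\lambda}-y|^{n-\alpha}|y^{\lambda}|^{\sigma_1}}\,dy.
\]

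Next I would invoke this very representation with $x$ replaced by $x^{\lambda}$, using $(x^{\lambda})^{\lambda} = x$, to obtain the analogous formula for $u_{\lambda}(x) = u(x^{\lambda})$ in which the kernels $|x-y|^{-(n-\alpha)}$ and $|x^{\lambda}-y|^{-(n-\alpha)}$ are interchanged. Subtracting the two representations and abbreviating $K(x,y) = |x-y|^{-(n-\alpha)} - |x^{\lambda}-y|^{-(n-\alpha)}$, I arrive at
\[
u_{\lambda}(x) - u(x) = \int_{\Sigma_{\lambda}} K(x,y)\Big(\frac{v_{\lambda}(y)^q}{|y^{\lambda}|^{\sigma_1}} - \frac{v(y)^q}{|y|^{\sigma_1}}\Big)\,dy.
\]
It then remains only to decompose the bracketed factor as $\frac{v_{\lambda}(y)^q - v(y)^q}{|y^{\lambda}|^{\sigma_1}} + v(y)^q\big(\frac{1}{|y^{\lambda}|^{\sigma_1}} - \frac{1}{|y|^{\sigma_1}}\big)$, which reproduces precisely the two terms in the statement, the minus sign on the second term accounting for the stated ordering $|y|^{-\sigma_1} - |y^{\lambda}|^{-\sigma_1}$.

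The computation is essentially bookkeeping, so the one point deserving care is the legitimacy of splitting the integral and performing the change of variables, i.e. the absolute convergence of each piece. Since $u,v$ are integrable solutions, the boundedness and decay established in the preceding theorem guarantee that the integrand $|y|^{-\sigma_1}v(y)^q|x-y|^{-(n-\alpha)}$ is locally integrable (with $\sigma_1 < \alpha < n$ controlling the singularities at $y = 0$ and $y = x$) and integrable at infinity, so the manipulations are valid and the rearrangement introduces no convergence issues. This is the step I would verify most carefully, though it presents no genuine obstacle.
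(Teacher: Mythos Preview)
Your proof is correct and follows essentially the same route as the paper's: both derive the symmetrized representations of $u(x)$ and $u_{\lambda}(x)$ over $\Sigma_{\lambda}$ via the change of variables $y\mapsto y^{\lambda}$ and the identity $|x-y^{\lambda}|=|x^{\lambda}-y|$, then subtract and regroup. Your write-up is slightly more explicit about the final algebraic decomposition and about absolute convergence, but the argument is the same.
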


\begin{proof}
We only prove the first identity since the second identity follows similarly. By noticing $|x-y^{\lambda}| = |x^{\lambda} - y|$, we obtain
\begin{align*}
u_{\lambda}(x) = {} & \int_{\Sigma_{\lambda}} \frac{v(y)^q}{|x^{\lambda}-y|^{n-\alpha}|y|^{\sigma_1}} \,dy + \int_{\Sigma_{\lambda}} \frac{v_{\lambda}(y)^q}{|x-y|^{n-\alpha}|y^{\lambda}|^{\sigma_1}} \,dy, \\
u(x) = {} & \int_{\Sigma_{\lambda}} \frac{v(y)^q}{|x-y|^{n-\alpha}|y|^{\sigma_1}} \,dy + \int_{\Sigma_{\lambda}} \frac{v_{\lambda}(y)^q}{|x^{\lambda}-y|^{n-\alpha}|y^{\lambda}|^{\sigma_1}} \,dy.
\end{align*}
Taking their difference yields
\begin{align*}
u_{\lambda}(x) - u(x) = {} & \int_{\Sigma_{\lambda}} \Big(\frac{v_{\lambda}(y)^q}{|x-y|^{n-\alpha}|y^{\lambda}|^{\sigma_1}} - \frac{v_{\lambda}(y)^q}{|x^{\lambda}-y|^{n-\alpha}|y^{\lambda}|^{\sigma_1}} \Big)\,dy \\
{} & - \int_{\Sigma_{\lambda}} \Big( \frac{v(y)^q}{|x-y|^{n-\alpha}|y|^{\sigma_1}} - \frac{v(y)^q}{|x^{\lambda}-y|^{n-\alpha}|y|^{\sigma_1}} \Big)\,dy \\
= {} & \int_{\Sigma_{\lambda}} \Big(\frac{1}{|x-y|^{n-\alpha}} - \frac{1}{|x^{\lambda}-y|^{n-\alpha}}\Big) \frac{v_{\lambda}(y)^q}{|y^{\lambda}|^{\sigma_1}} \,dy \\
{} & - \int_{\Sigma_{\lambda}} \Big( \frac{1}{|x-y|^{n-\alpha}} - \frac{1}{|x^{\lambda}-y|^{n-\alpha}}  \Big) \frac{ v(y)^q}{|y|^{\sigma_1}} \,dy,
\end{align*}
and the result follows accordingly. 
\end{proof}

\begin{proof}[Proof of Theorem \ref{radial symmetry}]
{\it Step 1:} We claim that there exists $N > 0$ such that if $\lambda \leq -N$, there hold
\begin{equation}\label{initial moving plane} 
u_{\lambda}(x) \leq u(x) \,\text{ and }\, v_{\lambda}(x) \leq v(x).
\end{equation}
Define $\Sigma_{\lambda}^{u} \doteq \{ x \in \Sigma_{\lambda} \,|\, u_{\lambda}(x) > u(x) \}$ and $\Sigma_{\lambda}^{v} \doteq \{ x \in \Sigma_{\lambda} \,|\, v_{\lambda}(x) > v(x) \}$. From Lemma \ref{lemma mp}, the mean-value theorem and since $$\Big( \frac{1}{|x-y|^{n-\alpha}} - \frac{1}{|x^{\lambda} -y|^{n-\alpha}} \Big) \Big(\frac{1}{|y|^{\sigma_1}} - \frac{1}{|y^{\lambda}|^{\sigma_1}}  \Big) \geq 0 \,\text{ for }\, y \in \Sigma_{\lambda},$$  
\begin{align*}
u_{\lambda}(x) - u(x) \leq {} & \int_{\Sigma_{\lambda}} \Big( \frac{1}{|x-y|^{n-\alpha}} - \frac{1}{|x^{\lambda} -y|^{n-\alpha}} \Big) \frac{1}{|y^{\lambda}|^{\sigma_1}}(v_{\lambda}(y)^q - v(y)^q)\,dy \\
\leq {} & \int_{\Sigma_{\lambda}^{v}} \frac{qv_{\lambda}(y)^{q-1}}{|x-y|^{n-\alpha}|y^{\lambda}|^{\sigma_1}} (v_{\lambda}(y) - v(y))\,dy.
\end{align*}
Thus, applying the weighted HLS inequality followed by H\"{o}lder's inequality gives us
\begin{align}\label{mpu}
\|u_{\lambda} - {} & u\|_{L^{r_0}(\Sigma_{\lambda}^{u})} \leq C\|v_{\lambda}^{q-1}(v_{\lambda}-v)\|_{L^{\frac{n r_0}{n+r_{0}(\alpha - \sigma_1)}}(\Sigma_{\lambda}^{v})} \notag \\
\leq {} & C\|v_{\lambda}\|_{L^{s_0}(\Sigma_{\lambda}^{v})}^{q-1}\|v_{\lambda}-v\|_{L^{s_0}(\Sigma_{\lambda}^{v})} 
\leq C\|v\|_{L^{s_0}(\Sigma_{\lambda}^{C})}^{q-1}\|v_{\lambda}-v\|_{L^{s_0}(\Sigma_{\lambda}^{v})}.
\end{align}
Similarly, there holds
\begin{align}\label{mpv}
\|v_{\lambda} - {} & v\|_{L^{s_0}(\Sigma_{\lambda}^{v})} \leq C\|u_{\lambda}^{p-1}(u_{\lambda} - u)\|_{L^{\frac{n s_0}{n+s_{0}(\alpha - \sigma_2)}}(\Sigma_{\lambda}^{u})} \notag \\
\leq {} & C\|u_{\lambda}\|_{L^{r_0}(\Sigma_{\lambda}^{u})}^{p-1}\|u_{\lambda}-u\|_{L^{r_0}(\Sigma_{\lambda}^{u})}
\leq C\|u\|_{L^{r_0}(\Sigma_{\lambda}^{C})}^{p-1}\|u_{\lambda}-u\|_{L^{r_0}(\Sigma_{\lambda}^{u})}.
\end{align}
By virtue of $(u,v) \in L^{r_0}(\mathbb{R}^n) \times L^{s_0}(\mathbb{R}^n)$, we can choose $N$ suitably large such that for $\lambda \leq -N$, there holds
$$ C ^2\|u\|_{L^{r_0}(\Sigma_{\lambda}^{C})}^{p-1} \|v\|_{L^{s_0}(\Sigma_{\lambda}^{C})}^{q-1} \leq 1/2.$$
Therefore, combining \eqref{mpu} with \eqref{mpv} gives us
\begin{equation*}
  \left\{\begin{array}{cl}
    \|u_{\lambda} - u\|_{L^{r_0}(\Sigma_{\lambda}^{u})} \leq \frac{1}{2}\|u_{\lambda} - u\|_{L^{r_0}(\Sigma_{\lambda}^{u})},\medskip \\ 
    \|v_{\lambda} - v\|_{L^{s_0}(\Sigma_{\lambda}^{v})} \leq \frac{1}{2}\|v_{\lambda} - v\|_{L^{s_0}(\Sigma_{\lambda}^{v})}, \\
  \end{array}
\right.
\end{equation*}
which further implies that $\|u_{\lambda} - u\|_{L^{r_0}(\Sigma_{\lambda}^{u})} = 0$ and  $\|v_{\lambda} - v\|_{L^{s_0}(\Sigma_{\lambda}^{v})} = 0$. Thus, both $\Sigma_{\lambda}^{u}$ and $\Sigma_{\lambda}^{v}$ have measure zero and are therefore empty. This concludes the proof of the first claim.

{\it Step 2:} We can move the plane $\Gamma_{\lambda}$ to the right provided that \eqref{initial moving plane} holds. Let
$$ \lambda_0 \doteq \sup \lbrace \lambda \,|\, u_{\lambda}(x) \leq u(x), v_{\lambda}(x) \leq v(x)\rbrace.$$
We claim that $u,v$ are symmetric about the plane $\Gamma_{\lambda_0}$, i.e.
$$ u_{\lambda_0}(x) = u(x) \,\text{ and }\, v_{\lambda_0}(x) = v(x) \,\text{ for all }\, x \in \Sigma_{\lambda_0}. $$
On the contrary, assume $\lambda_0 \leq 0$ and for all $x \in \Sigma_{\lambda_0}$,
$$ u_{\lambda_0}(x) \leq u(x) \,\text{ and }\, v_{\lambda_0}(x) \leq v(x), \,\text{ but }\, u_{\lambda_0}(x) \not\equiv u(x) \,\text{ or }\, v_{\lambda_0}(x) \not\equiv v(x).$$
But we will show this implies the plane can be moved further to the right, thereby contradicting the definition of $\lambda_0$. Namely, there is a small $\varepsilon > 0$ such that
\begin{equation}\label{contradict mp}
u_{\lambda}(x) \leq u(x) \,\text{ and }\, v_{\lambda}(x) \leq v(x), \,~\, x \in\Sigma_{\lambda}, \,\text{ for all }\, \lambda \in [\lambda_0,\lambda_0 + \varepsilon).
\end{equation}
In the case, say $v_{\lambda_0}(x) \not\equiv v(x)$ on $\Sigma_{\lambda_0}$, Lemma \ref{lemma mp} indicates $u_{\lambda_0}(x) < u(x)$ in the interior of $\Sigma_{\lambda_0}$. Define
$$ \Phi_{\lambda_0}^{u} \doteq \lbrace x \in \Sigma_{\lambda_0} \,|\, u_{\lambda_0}(x) \geq u(x)\rbrace \,\text{ and }\, \Phi_{\lambda_0}^{v} \doteq \lbrace x \in \Sigma_{\lambda_0} \,|\, v_{\lambda_0}(x) \geq v(x)\rbrace.$$
Then $\Phi_{\lambda_0}^{u}$ and $\Phi_{\lambda_0}^{v}$ have measure zero and 
$$ \lim_{\lambda \rightarrow \lambda_0 } \Sigma_{\lambda}^{u} \subset \Phi_{\lambda_0}^{u} \,\text{ and }\, \lim_{\lambda \rightarrow \lambda_0 } \Sigma_{\lambda}^{v} \subset \Phi_{\lambda_0}^{v}. $$
Let $\Omega^{\ast}$ denote the reflection of the set $\Omega$ about the plane $\Gamma_{\lambda}$. According to the integrability of $u$ and $v$, we can choose a suitably small $\varepsilon$ such that for all $\lambda \in [\lambda_0,\lambda_0 + \varepsilon)$, $\|u\|_{L^{r_0}((\Sigma_{\lambda}^{u})^{\ast})}$ and $\|v\|_{L^{s_0}((\Sigma_{\lambda}^{v})^{\ast})}$ are sufficiently small. Therefore, \eqref{mpu} and \eqref{mpv} imply 
\begin{equation*}
  \left\{\begin{array}{cl}
	\|u_{\lambda} - u\|_{L^{r_0}(\Sigma_{\lambda}^{u})} \leq C\|v\|_{L^{s_0}((\Sigma_{\lambda}^{v})^{\ast})}^{q-1}\|v_{\lambda}-v\|_{L^{s_0}	(\Sigma_{\lambda}^{v})} \leq \frac{1}{2}\|v_{\lambda}-v\|_{L^{s_0}	(\Sigma_{\lambda}^{v})},\medskip \\ 
    \|v_{\lambda} - v\|_{L^{s_0}(\Sigma_{\lambda}^{v})} \leq C\|u\|_{L^{r_0}((\Sigma_{\lambda}^{u})^{\ast})}^{p-1}\|u_{\lambda}-u\|_{L^{r_0}(\Sigma_{\lambda}^{u})} \leq \frac{1}{2}\|u_{\lambda}-u\|_{L^{r_0}(\Sigma_{\lambda}^{u})}, \\
  \end{array}
\right.
\end{equation*}
and we deduce that $\Sigma_{\lambda}^{u}$ and $\Sigma_{\lambda}^{v}$ are both empty. This proves \eqref{contradict mp}. Hence, we conclude that $u$ and $v$ are symmetric and decreasing about the plane $\Gamma_{\lambda_0}$.

{\it Step 3:} We assert that $u$ and $v$ are radially symmetric and decreasing about the origin.

First, notice that $\lambda_0$ is indeed equal to zero. Otherwise, if $\lambda_0 < 0$, then Lemma \ref{lemma mp} yields
\begin{align*}
0 = {} & u(x) - u_{\lambda_0}(x) \\
= {} & \int_{\Sigma_{\lambda_0}} \Big( \frac{1}{|x-y|^{n-\alpha}} - \frac{1}{|x^{\lambda_0} -y|^{n-\alpha}} \Big) \Big(\frac{1}{|y|^{\sigma_1}} - \frac{1}{|y^{\lambda_0}|^{\sigma_1}}  \Big) v(y)^q\,dy \not\equiv 0,
\end{align*}
which is impossible. Therefore, $u$ and $v$ are symmetric and decreasing about the plane $\Gamma_{\lambda_0 = 0}$, and since the coordinate direction $x_1$ can be chosen arbitrarily, we conclude that $u$ and $v$ must be radially symmetric and decreasing about the origin. This completes the proof of the theorem.
\end{proof}

\section{Fast decay rates of integrable solutions}\label{fast section}
In this section, $u,v$ are taken to be positive integrable solutions of system \eqref{whls ie} unless further specified.

\subsection{Fast decay rate for $u(x)$}

\begin{proposition}\label{fast decay of u}
There holds the following.
\begin{enumerate}[(i)]
\item The improper integral, $\ds A_0 \doteq \int_{\mathbb{R}^n}  \frac{v(y)^q}{|y|^{\sigma_1}} \,dy < \infty$, is convergent;

\item $\ds \lim_{|x|\longrightarrow \infty} u(x)|x|^{n-\alpha} = A_0 .$
\end{enumerate}
\end{proposition}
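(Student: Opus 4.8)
The plan is to prove both parts together by exploiting the pointwise decay estimates that follow from radial monotonicity and integrability. First I would record the consequence of Theorem \ref{radial symmetry} and Theorem \ref{integrability}: since $v$ is radially symmetric and decreasing about the origin and $v\in L^{s}(\mathbb{R}^n)$ for every $\tfrac1s\in(0,d_v/n)$ with $d_v\doteq\min\{n-\alpha,\,p(n-\alpha)-(\alpha-\sigma_2)\}$, the elementary bound $\|v\|_{s}^{s}\ge v(x)^{s}|B_{|x|}(0)|$ yields $v(x)\le C_{\varepsilon}|x|^{-(d_v-\varepsilon)}$ for $|x|$ large and every small $\varepsilon>0$. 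The decisive structural fact, which I would isolate as a short computation, is the exponent inequality $q\,d_v+\sigma_1>n$; this follows from $q\ge p$, $\sigma_1\ge\sigma_2$ together with the non-subcritical condition \eqref{non-subcritical} (in the regime $p(n-\alpha)+\sigma_2\ge n$ one uses $q(n-\alpha)+\sigma_1\ge p(n-\alpha)+\sigma_2\ge n$, with the borderline logarithmic case absorbed into the sharper endpoint integrability, while the complementary regime reduces to a direct manipulation of \eqref{non-subcritical0}).

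For part (i) I would split $A_0=\int_{B_1(0)}+\int_{B_1(0)^{C}}$. On $B_1(0)$ the boundedness of $v$ (established in the preceding theorem) and $\sigma_1<n$ give $\int_{B_1(0)}v^{q}|y|^{-\sigma_1}\,dy\le\|v\|_{\infty}^{q}\int_{B_1(0)}|y|^{-\sigma_1}\,dy<\infty$. On the exterior, $|y|^{-\sigma_1}\le1$ together with the decay bound produce an integrand dominated by $C_{\varepsilon}|y|^{-q(d_v-\varepsilon)-\sigma_1}$, which is integrable at infinity precisely because $q\,d_v+\sigma_1>n$ upon choosing $\varepsilon$ small. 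Hence $A_0<\infty$.

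For part (ii) I would write, using the scale change in the kernel, $u(x)|x|^{n-\alpha}=\int_{\mathbb{R}^n}\big(|x|/|x-y|\big)^{n-\alpha}\,v(y)^{q}|y|^{-\sigma_1}\,dy$, and pass to the limit $|x|\to\infty$, where the kernel tends to $1$ pointwise. To justify the interchange I would split the domain into three pieces: $\{|y|\le R\}$, where for $|x|>2R$ the kernel is bounded by $2^{n-\alpha}$ and converges to $1$, so dominated convergence applies; $\{|y|>R,\ |x-y|>|x|/2\}$, where the kernel is again $\le2^{n-\alpha}$ and the contribution is $\le2^{n-\alpha}\int_{|y|>R}v^{q}|y|^{-\sigma_1}\,dy$, a tail of the convergent integral $A_0$ that is small for large $R$; and the near-diagonal piece $\{|x-y|\le|x|/2\}$, on which $|y|\simeq|x|$ so that $v(y)\le C_{\varepsilon}|x|^{-(d_v-\varepsilon)}$ and $|y|^{-\sigma_1}\le C|x|^{-\sigma_1}$, giving a bound $C|x|^{n-\alpha-q(d_v-\varepsilon)-\sigma_1}\int_{|x-y|\le|x|/2}|x-y|^{-(n-\alpha)}\,dy=C|x|^{n-q(d_v-\varepsilon)-\sigma_1}\to0$. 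Combining the three estimates with an $\varepsilon/3$ argument (first choosing $R$ large, then $|x|$ large) yields $u(x)|x|^{n-\alpha}\to\int_{\mathbb{R}^n}v^{q}|y|^{-\sigma_1}\,dy=A_0$.

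The main obstacle is the near-diagonal region $\{|x-y|\le|x|/2\}$, where the Riesz kernel is singular and the naive bound $(|x|/|x-y|)^{n-\alpha}\to1$ breaks down; controlling it forces the use of the sharp pointwise decay rate of $v$ and, through it, the exponent inequality $q\,d_v+\sigma_1>n$. That same inequality is exactly what makes $A_0$ finite in part (i), so verifying it from the standing hypotheses is the real crux shared by both parts.
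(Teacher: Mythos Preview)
Your argument is correct and follows the same three-region decomposition ($|y|\le R$; $|y|>R$ far from $x$; $|x-y|\le|x|/2$) that the paper uses for part (ii), with the near-diagonal piece controlled by the same exponent inequality $q\,d_v+\sigma_1>n$. The one organizational difference is that you convert the $L^s$-integrability of $v$ into a pointwise bound $v(y)\le C_\varepsilon|y|^{-(d_v-\varepsilon)}$ at the outset and use it uniformly for both parts, whereas the paper proves part (i) via H\"older's inequality against $\|v\|_{\ell q}$ (splitting into the two cases $n-\alpha\lessgtr p(n-\alpha)-(\alpha-\sigma_2)$) and only invokes radial monotonicity later, inside the $J_3$ estimate. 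Your route is slightly more direct and avoids the separate H\"older argument, at the price of needing Theorem~\ref{radial symmetry} already for part (i). One minor remark: your hedge about a ``borderline logarithmic case'' is unnecessary, since under $q\ge p$, $\sigma_1\ge\sigma_2$ and \eqref{non-subcritical} one always has $q\ge\frac{n+\alpha-2\sigma_1}{n-\alpha}$, hence $q(n-\alpha)+\sigma_1\ge n+\alpha-\sigma_1>n$ strictly; no endpoint case actually occurs.
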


\begin{remark}
According to this, we can find a large $R>0$ such that
\begin{equation}\label{long u}
u(x) = \frac{A_0 + \mathrm{o}(1)}{|x|^{n-\alpha}} \,~\,\text{ for }\, x \in \mathbb{R}^n \backslash B_{R}(0),
\end{equation}
and we shall often invoke this property in establishing the decay rates for $v(x)$.
\end{remark}

\begin{proof}
(i) Without loss of generality, we assume $\sigma_{1} > 0$ since the proof for the unweighted case is similar but far simpler. For each $R> 0$, since $v \in L^{\infty}(\mathbb{R}^n)$ and $\sigma_1 < n$, we have
\begin{equation*}
\int_{B_{R}(0)} \frac{v(y)^q}{|y|^{\sigma_1}} \,dy < \infty.
\end{equation*}
So it remains to show $\int_{B_{R}(0)^C} \frac{v(y)^q}{|y|^{\sigma_1}} \,dy < \infty$. There are two cases to consider. (1.) Assume $n-\alpha \leq p(n-\alpha)-(\alpha-\sigma_2)$. It is clear that $q\geq p$, $\sigma_1 \geq \sigma_2$ and \eqref{non-subcritical} imply
$ q \geq \frac{n+\alpha - 2\sigma_1}{n-\alpha}$. Choose $\varepsilon > 0$ with $\varepsilon \in (\alpha - 2\sigma_1,\alpha - \sigma_1)$. Then set $\ell = \frac{n+\varepsilon}{n+\alpha-2\sigma_1}$ and $\ell' = \frac{n+\varepsilon}{\varepsilon - \alpha + 2\sigma_1}$ so that $\frac{1}{\ell} + \frac{1}{\ell'} = 1$, $lq > \frac{n}{n-\alpha}$, and $\ell' > \frac{n}{\sigma_1}$. Therefore, H\"{o}lder's inequality and Theorem \ref{integrability} imply
\begin{align*}
\int_{B_{R}(0)^{C}} \frac{v(y)^q}{|y|^{\sigma_1}} \,dy \leq {} & \Big( \int_{B_{R}(0)^{C}} \frac{1}{|y|^{\sigma_1 \frac{\ell}{\ell-1}}} \,dy \Big)^{\frac{\ell-1}{\ell}} \Big(\int_{B_{R}(0)^{C}} v(y)^{\ell q} \,dy\Big)^{1/\ell} \\
\leq {} & C\Big( \int_{R}^{\infty} t^{n-\sigma_1 \ell'} \,\frac{dt}{t}\Big)^{\frac{1}{\ell'}}  \Big(\int_{B_{R}(0)^{C}} v(y)^{\ell q} \,dy\Big)^{1/\ell} \\
\leq {} & C\|v\|_{\ell q}^{q} < \infty.
\end{align*}
(2.) Assume $n-\alpha > p(n-\alpha)-(\alpha-\sigma_2)$. For small $\varepsilon > 0$, take $\ell = \frac{n}{n-\sigma_1 + \varepsilon}$ and $\ell' = \frac{n}{\sigma_1 - \varepsilon}$ so that $\frac{1}{\ell} + \frac{1}{\ell'} = 1$. From the non-subcritical condition \eqref{non-subcritical} and since $pq > 1$, we get
\begin{equation}\label{consequence of nonsub}
\frac{q(n-\sigma_2)+(n-\sigma_1)}{q(1+p)} = \frac{n-\sigma_1}{q(1+p)} + \frac{n-\sigma_2}{1+p} < \frac{n-\sigma_1}{1+q} + \frac{n-\sigma_2}{1+p} \leq n-\alpha.
\end{equation}
Thus, $\frac{n-\sigma_1}{q(1+p)} < n-\alpha - \frac{n-\sigma_2}{1+p} \,\Longrightarrow\, \frac{n-\sigma_1}{q} < p(n-\alpha) - (\alpha-\sigma_2)$. This yields $\frac{n-\sigma_1 + \varepsilon}{nq} < \frac{p(n-\alpha)-(\alpha-\sigma_2)}{n}$ for a sufficiently small $\varepsilon$, which implies
$$\frac{1}{\ell q} < \frac{p(n-\alpha) - (\alpha-\sigma_2)}{n} \,\text{ and }\, \ell' > \frac{n}{\sigma_1}.$$ 
Hence, H\"{o}lder's inequality and Theorem \ref{integrability} imply 
$$\int_{B_{R}(0)^C} \frac{v(y)^q}{|y|^{\sigma_1}} \,dy \leq C\|v\|_{lq}^{q} < \infty.$$

(ii) For fixed $R>0$, write
\begin{align*}
\int_{\mathbb{R}^n} {} & \frac{|x|^{n-\alpha}v(y)^q}{|x-y|^{n-\alpha}|y|^{\sigma_1}} \,dy = \int_{B_{R}(0)} \frac{|x|^{n-\alpha}v(y)^q}{|x-y|^{n-\alpha}|y|^{\sigma_1}} \,dy \\
{} & + \int_{\lbrace\mathbb{R}^n \backslash B_{R}(0)\rbrace \backslash B_{|x|/2}(x)} \frac{|x|^{n-\alpha}v(y)^q}{|x-y|^{n-\alpha}|y|^{\sigma_1}} \,dy + \int_{B_{|x|/2}(x)} \frac{|x|^{n-\alpha}v(y)^q}{|x-y|^{n-\alpha}|y|^{\sigma_1}} \,dy \\
\doteq {} & J_1 + J_2 + J_3.
\end{align*}
Set $$J_{1}' \doteq \int_{B_{R}(0)} \frac{v(y)^q}{|y|^{\sigma_1}} \Big( \frac{|x|^{n-\alpha}}{|x-y|^{n-\alpha}} - 1 \Big) \,dy.$$
For $y\in B_{R}(0)$ and large $|x|$,
$$  \frac{v(y)^q}{|y|^{\sigma_1}} \Big| \frac{|x|^{n-\alpha}}{|x-y|^{n-\alpha}} - 1 \Big| \leq C\frac{v(y)^q}{|y|^{\sigma_1}} \in L^{1}(\mathbb{R}^n) $$
as a result of part (i). By virtue of the Lebesgue dominated convergence theorem, $|J_{1}'|\longrightarrow 0$ as $|x| \longrightarrow \infty$, which implies
\begin{equation}\label{J1}
\lim_{R\longrightarrow \infty} \lim_{|x|\longrightarrow \infty} J_{1} = A_0.
\end{equation}
Next, notice that if $y \in \lbrace\mathbb{R}^n \backslash B_{R}(0)\rbrace\backslash B_{|x|/2}(x)$, then $|x-y| \geq |x|/2$. Therefore, 
\begin{equation}\label{J2}
J_{2} \leq C\int_{\mathbb{R}^n \backslash B_{R}(0)} \frac{v(y)^q}{|y|^{\sigma_1}}\,dy \longrightarrow 0 \,\text{ as }\, R \longrightarrow \infty.
\end{equation}
Set 
$$J_{3}' \doteq \frac{J_3}{|x|^{n-\alpha}} = \int_{B_{|x|/2}(x)} \frac{v(y)^q}{|x-y|^{n-\alpha}|y|^{\sigma_1}}\,dy.$$
In view of Theorem \ref{radial symmetry}, $v$ is radially symmetric and decreasing about the origin. Therefore,
\begin{equation}\label{bound J3'}
J_{3}' \leq v(|x|/2)^q \int_{B_{|x|/2}(x)} \frac{dy}{|x-y|^{n-\alpha}|y|^{\sigma_1}} \leq Cv(|x|/2)^q |x|^{\alpha - \sigma_1}.
\end{equation}
By Theorem \ref{integrability}, $v \in L^{s}(\mathbb{R}^n)$ such that
\begin{equation*}
\frac{1}{s} = \min\Big\lbrace \frac{n-\alpha}{n}, \frac{p(n-\alpha) - (\alpha-\sigma_2)}{n} \Big\rbrace - \frac{\varepsilon}{n} ~\,\text{ for sufficiently small }\, \varepsilon > 0.
\end{equation*}
Then, combining this with the decreasing property of $v$ yields
\begin{equation}\label{s integrability}
v(|x|/2)^s |x|^{n} \leq C \int_{B_{|x|/2}(0) \backslash B_{|x|/4}(0) } v(y)^s \,dy < \infty.
\end{equation}
We claim that
\begin{equation}\label{J3'}
|x|^{n-\alpha}J_{3}' = \mathrm{o}(1) \,\text{ as }\, |x| \longrightarrow \infty.
\end{equation}
To do so, we consider two cases. 

Case 1. Let $n-\alpha \leq p(n-\alpha)-(\alpha-\sigma_2)$. Then \eqref{s integrability} implies that
$$v(|x|/2)^q |x|^{q(n-\alpha-\varepsilon)} \leq C,$$
which when combined with \eqref{bound J3'}, yields
$$ |x|^{q(n-\alpha - \varepsilon) - (\alpha-\sigma_1)} J_{3'} \leq Cv(|x|/2)^q |x|^{q(n-\alpha - \varepsilon)} \leq C. $$
Recall that $q \geq \frac{n+\alpha-2\sigma_1}{n-\alpha}$, which implies that $q(n-\alpha) - (\alpha-\sigma_1) \geq n-\sigma_1 > n-\alpha$. Thus, by choosing $\varepsilon$ suitably small and sending $|x|\longrightarrow \infty$ in the previous estimate after the appropriate calculations, we obtain \eqref{J3'}.

Case 2. Let $n-\alpha > p(n-\alpha)-(\alpha-\sigma_2)$. Then \eqref{s integrability} implies
$$v(|x|/2)^q |x|^{q(p(n-\alpha)-(\alpha-\sigma_2))} \leq C,$$
which, when combined with \eqref{bound J3'}, gives us
$$ |x|^{q[p(n-\alpha)-(\alpha-\sigma_2)]-(\alpha-\sigma_1)}J_{3'} \leq C. $$
It is easy to check that \eqref{consequence of nonsub} implies that
$$ q[p(n-\alpha)-(\alpha-\sigma_2)] - (\alpha - \sigma_1) > n-\alpha.$$
Assertion \eqref{J3'} follows by sending $|x|\longrightarrow \infty$ in the last estimate after the appropriate calculations. 

Notice that \eqref{J3'} implies that
\begin{equation}\label{J3}
\lim_{|x|\longrightarrow \infty} J_{3} = 0. 
\end{equation}
Hence, \eqref{J1},\eqref{J2} and \eqref{J3} imply $$\lim_{|x|\longrightarrow \infty} |x|^{n-\alpha}u(x) = A_0,$$
and this completes the proof of the proposition.
\end{proof}

\subsection{Fast decay rates for $v(x)$}

\begin{proposition}\label{fast decay of v 1}
If $p(n-\alpha) + \sigma_2 > n$, then 
\begin{enumerate}[(i)]
\item $\ds A_1 \doteq \int_{\mathbb{R}^n} \frac{u(y)^p}{|x|^{\sigma_2}} \,dy < \infty$;
\item $\ds\lim_{|x|\longrightarrow \infty} |x|^{n-\alpha}v(x) = A_1. $
\end{enumerate}
\end{proposition}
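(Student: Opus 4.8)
The plan is to mirror the argument used for Proposition \ref{fast decay of u}, with the roles of $u$ and $v$ interchanged and $\sigma_1$ replaced by $\sigma_2$. The crucial new input is the already-established fast decay $u(y)\simeq |y|^{-(n-\alpha)}$ from Proposition \ref{fast decay of u}, which now plays the role that the bare integrability of $v$ played there; the hypothesis $p(n-\alpha)+\sigma_2 > n$ will turn out to be exactly the convergence/decay threshold in two different places.

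For part (i), I would split $A_1 = \int_{B_R(0)}|y|^{-\sigma_2}u(y)^p\,dy + \int_{B_R(0)^C}|y|^{-\sigma_2}u(y)^p\,dy$. On $B_R(0)$, the boundedness of $u$ together with $\sigma_2 < \alpha < n$ renders the integral finite. On the complement, the fast decay $u(y)\leq C|y|^{-(n-\alpha)}$ from Proposition \ref{fast decay of u} gives the pointwise majorant $|y|^{-\sigma_2}u(y)^p \leq C|y|^{-p(n-\alpha)-\sigma_2}$, and the tail integral $\int_{B_R(0)^C}|y|^{-p(n-\alpha)-\sigma_2}\,dy$ converges precisely when $p(n-\alpha)+\sigma_2 > n$. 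Thus the hypothesis is exactly the condition making $A_1$ finite.

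For part (ii), I would write $|x|^{n-\alpha}v(x) = \int_{\mathbb{R}^n}\frac{|x|^{n-\alpha}u(y)^p}{|x-y|^{n-\alpha}|y|^{\sigma_2}}\,dy$ and decompose the domain into $J_1 = \int_{B_R(0)}$, $J_2 = \int_{\{\mathbb{R}^n\setminus B_R(0)\}\setminus B_{|x|/2}(x)}$ and $J_3 = \int_{B_{|x|/2}(x)}$, exactly as in Proposition \ref{fast decay of u}. The treatment of $J_1$ and $J_2$ should be essentially verbatim: for $J_1$, the ratio $|x|^{n-\alpha}/|x-y|^{n-\alpha}\to 1$ pointwise on $B_R(0)$, so dominated convergence (with the $L^1$ majorant furnished by part (i)) yields $\lim_{R\to\infty}\lim_{|x|\to\infty}J_1 = A_1$; for $J_2$, the region forces $|x-y|\geq |x|/2$, whence $J_2 \leq C\int_{B_R(0)^C}|y|^{-\sigma_2}u(y)^p\,dy \to 0$ as $R\to\infty$ by part (i).

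The main obstacle, as before, is controlling $J_3$. Setting $J_3' = J_3/|x|^{n-\alpha}$ and using that $u$ is radially decreasing (Theorem \ref{radial symmetry}) together with $|y|\geq |x|/2$ on $B_{|x|/2}(x)$, I would bound $J_3' \leq u(|x|/2)^p \int_{B_{|x|/2}(x)}|x-y|^{-(n-\alpha)}|y|^{-\sigma_2}\,dy \leq C\,u(|x|/2)^p |x|^{\alpha-\sigma_2}$, where the volume integral contributes the factor $|x|^{\alpha}$ and $|y|^{-\sigma_2}\leq C|x|^{-\sigma_2}$ on this ball. Substituting the fast decay $u(|x|/2)^p \leq C|x|^{-p(n-\alpha)}$ from Proposition \ref{fast decay of u} then gives $J_3 = |x|^{n-\alpha}J_3' \leq C|x|^{\,n - p(n-\alpha) - \sigma_2}$, whose exponent is negative precisely under the hypothesis $p(n-\alpha)+\sigma_2 > n$; hence $J_3\to 0$ as $|x|\to\infty$. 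Combining the three limits yields $\lim_{|x|\to\infty}|x|^{n-\alpha}v(x)=A_1$, completing the proof.
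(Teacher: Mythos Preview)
Your proof is correct, but it deviates from the paper's own route in two respects. For part (i), the paper does not use the pointwise decay $u(y)\le C|y|^{-(n-\alpha)}$; instead it rewrites the hypothesis as $p>\tfrac{n-\sigma_2}{n-\alpha}$ and invokes H\"older's inequality together with the optimal integrability of $u$ from Theorem~\ref{integrability}, exactly mimicking Proposition~\ref{fast decay of u}(i). Your direct pointwise bound is simpler and makes the role of the threshold $p(n-\alpha)+\sigma_2>n$ more transparent. For part (ii), the paper uses only a \emph{two}-piece decomposition $J_4+J_5$ (over $B_R(0)$ and its complement), substituting the asymptotic $u(y)=(A_0+\mathrm{o}(1))|y|^{-(n-\alpha)}$ from \eqref{long u} directly into $J_5$ and bounding the resulting tail integral via $p(n-\alpha)+\sigma_2>n$; your three-piece split, lifted verbatim from Proposition~\ref{fast decay of u}, instead isolates the near-$x$ ball $B_{|x|/2}(x)$ and handles it separately via radial monotonicity plus the fast decay. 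The paper's version is shorter, while yours is more careful in that it treats the region where $|x-y|$ is small explicitly rather than absorbing it into a single heuristic tail bound.
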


\begin{proof}
Since $p > \frac{n-\sigma_2}{n-\alpha}$, (i) follows from Theorem \ref{integrability} and H\"{o}lder's inequality similar to the proof of Proposition \ref{fast decay of u}(i). 

To prove (ii), write
\begin{align*}
|x|^{n-\alpha}v(x) = {} & \int_{B_{R}(0)} \frac{|x|^{n-\alpha}u(y)^p}{|x-y|^{n-\alpha}|y|^{\sigma_2}}\,dy \\
{} & + \int_{\mathbb{R}^n \backslash B_{R}(0)}  \frac{ |x|^{n-\alpha} [A_0 + \mathrm{o}(1)]^p }{|x-y|^{n-\alpha}|y|^{p(n-\alpha) + \sigma_2}} \,dy \doteq J_4 + J_5.
\end{align*}
For a large $R>0$, if $y\in B_{R}(0)$, then $\lim_{|x|\longrightarrow \infty} \frac{|x|^{n-\alpha}}{|x-y|^{n-\alpha}} = 1$, and the Lebesgue dominated convergence theorem implies that
\begin{equation}\label{J6}
\lim_{R\longrightarrow\infty}\lim_{|x|\longrightarrow \infty} J_{4} = \lim_{R\longrightarrow\infty}\lim_{|x|\longrightarrow \infty} \int_{B_{R}(0)} \frac{|x|^{n-\alpha} u(y)^p}{|x-y|^{n-\alpha}|y|^{\sigma_2}} \,dy = A_1.
\end{equation}
Likewise, since $p(n-\alpha) + \sigma_2 > n$, 
\begin{equation*}
J_{5} \leq C\int_{R}^{\infty} t^{n-p(n-\alpha) - \sigma_2} \,\frac{dt}{t} = \mathrm{o}(1) \,\text{ as }\, R \longrightarrow \infty \,\text{ for suitably large }\, |x|.
\end{equation*}
Particularly,
\begin{equation*}
\int_{\mathbb{R}^n \backslash B_{R}(0)} \frac{ |x|^{n-\alpha}}{|x-y|^{n-\alpha}|y|^{p(n-\alpha)+\sigma_2}}\,dy = \mathrm{o}(1) \,\text{ as }\, |x| \longrightarrow \infty,\, R \longrightarrow \infty.
\end{equation*}
Hence, assertion (ii) follows from this and \eqref{J6}.
\end{proof}

\begin{proposition}\label{fast decay of v 2}
If $p(n-\alpha) + \sigma_2 = n$, then $$\lim_{|x|\longrightarrow\infty} \frac{|x|^{n-\alpha}}{\ln |x|}v(x) = A_{0}^p |S^{n-1}|,$$
where $|S^{n-1}|$ is the surface area of the $(n-1)$-dimensional unit sphere. 
\end{proposition}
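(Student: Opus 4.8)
The plan is to follow the same scheme as in Proposition \ref{fast decay of v 1}, the essential difference being that in the borderline case $p(n-\alpha)+\sigma_2=n$ the quantity $A_1=\int_{\mathbb{R}^n}u(y)^p|y|^{-\sigma_2}\,dy$ diverges, and it does so only logarithmically; the task is therefore to isolate this logarithmic growth. Fix a large $R>0$ and split
\[
|x|^{n-\alpha}v(x)=\int_{B_R(0)}\frac{|x|^{n-\alpha}u(y)^p}{|x-y|^{n-\alpha}|y|^{\sigma_2}}\,dy+\int_{\mathbb{R}^n\backslash B_R(0)}\frac{|x|^{n-\alpha}u(y)^p}{|x-y|^{n-\alpha}|y|^{\sigma_2}}\,dy\doteq K_1+K_2.
\]
Since $u$ is bounded and $|x-y|\geq |x|/2$ for $y\in B_R(0)$ and $|x|>2R$, one has $K_1\leq 2^{n-\alpha}\|u\|_\infty^p\int_{B_R(0)}|y|^{-\sigma_2}\,dy\leq C(R)$ uniformly in $x$ (note $\sigma_2<\alpha<n$), so that $K_1/\ln|x|\to 0$.

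For $K_2$ I would invoke \eqref{long u}: for $|y|>R$ we have $u(y)=(A_0+\mathrm{o}(1))|y|^{-(n-\alpha)}$, whence, using precisely the borderline identity $p(n-\alpha)+\sigma_2=n$,
\[
\frac{u(y)^p}{|y|^{\sigma_2}}=\frac{(A_0+\mathrm{o}(1))^p}{|y|^{p(n-\alpha)+\sigma_2}}=\frac{(A_0+\mathrm{o}(1))^p}{|y|^{n}}.
\]
Thus, up to the $\mathrm{o}(1)$ factor, $K_2$ is a constant multiple of the purely geometric integral
\[
L(x)\doteq \int_{\mathbb{R}^n\backslash B_R(0)}\frac{|x|^{n-\alpha}}{|x-y|^{n-\alpha}|y|^{n}}\,dy,
\]
and the crux of the argument is the asymptotic $L(x)=|S^{n-1}|\ln|x|+O(1)$ as $|x|\to\infty$.

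To establish this I would decompose the domain $\{|y|>R\}$ into three pieces. On the annulus $R<|y|<|x|/2$ one has $|x-y|\geq |x|/2$, so $|x|^{n-\alpha}/|x-y|^{n-\alpha}=1+\mathrm{o}(1)$ uniformly, and the integral reduces to $\int_{R<|y|<|x|/2}|y|^{-n}\,dy=|S^{n-1}|\big(\ln|x|-\ln(2R)\big)$, which carries the entire logarithm. On the region $|x|/2<|y|<2|x|$ one uses $|y|\simeq|x|$ together with $\int_{B_{3|x|}(x)}|x-y|^{-(n-\alpha)}\,dy\leq C|x|^{\alpha}$ to bound the contribution by $C|x|^{n-\alpha}\cdot|x|^{-n}\cdot|x|^{\alpha}=O(1)$. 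On the outer region $|y|>2|x|$ one has $|x-y|\simeq|y|$, so the contribution is at most $C|x|^{n-\alpha}\int_{2|x|}^{\infty}r^{-(n-\alpha)-1}\,dr=O(1)$. Summing the three pieces gives $L(x)=|S^{n-1}|\ln|x|+O(1)$, and in particular $L(x)/\ln|x|\to|S^{n-1}|$ for every fixed $R$.

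It remains to absorb the $\mathrm{o}(1)$ in a uniform manner. Given $\varepsilon>0$, choose $R$ so large that $\big||y|^{n-\alpha}u(y)-A_0\big|<\varepsilon$ for $|y|>R$; then $(A_0-\varepsilon)^pL(x)\leq K_2\leq (A_0+\varepsilon)^pL(x)$. Dividing by $\ln|x|$, letting $|x|\to\infty$, and invoking the asymptotics for $L$ yields
\[
(A_0-\varepsilon)^p|S^{n-1}|\leq\liminf_{|x|\to\infty}\frac{K_2}{\ln|x|}\leq\limsup_{|x|\to\infty}\frac{K_2}{\ln|x|}\leq (A_0+\varepsilon)^p|S^{n-1}|,
\]
and finally letting $\varepsilon\to 0$ gives $K_2/\ln|x|\to A_0^p|S^{n-1}|$. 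Combining this with $K_1/\ln|x|\to 0$ proves the stated limit. The main obstacle is the sharp asymptotic analysis of the singular integral $L(x)$: one must isolate the logarithmic divergence (which arises solely from the annulus $R<|y|<|x|/2$, where the weight $|y|^{-n}$ is integrated against an essentially constant Riesz kernel) while rigorously confirming that both the region near the pole $y=x$ and the far field contribute only at the bounded order $O(1)$, and then interchanging the $|x|\to\infty$ and $\varepsilon\to 0$ limits in the correct order.
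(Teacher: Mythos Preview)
Your overall scheme matches the paper's: split at radius $R$, kill the inner piece as $O(1)$, use \eqref{long u} to reduce the outer piece to the geometric integral $L(x)$, and show $L(x)/\ln|x|\to|S^{n-1}|$. The paper handles $L(x)$ by the scaling $y=|x|z$, which collapses it to $\int_{|z|>R/|x|}|z|^{-n}|e-z|^{-(n-\alpha)}\,dz$ and then splits at a fixed small radius $|z|=c$; you instead work directly in $y$ with a three-shell decomposition. Both routes are legitimate, and your estimates on the middle shell $|x|/2<|y|<2|x|$ and the far field $|y|>2|x|$ are correct.

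There is, however, a genuine error on the inner shell. From $|x-y|\ge|x|/2$ alone you \emph{cannot} conclude $|x|^{n-\alpha}/|x-y|^{n-\alpha}=1+\mathrm{o}(1)$ uniformly on $R<|y|<|x|/2$: when $|y|$ is close to $|x|/2$ this ratio ranges over $[(2/3)^{n-\alpha},2^{n-\alpha}]$ and is not close to $1$. As written, your argument would only yield that the inner-shell contribution lies between two constant multiples of $|S^{n-1}|\ln|x|$, which is not enough to identify the limiting constant. The fix is to insert one more scale: choose $\delta\in(0,1/2)$ and split at $|y|=\delta|x|$. On $R<|y|<\delta|x|$ you have $|x-y|\in[(1-\delta)|x|,(1+\delta)|x|]$, so the ratio lies in $[(1+\delta)^{-(n-\alpha)},(1-\delta)^{-(n-\alpha)}]$ and the contribution is $(1+O(\delta))|S^{n-1}|\ln|x|$; on $\delta|x|<|y|<|x|/2$ the ratio is merely bounded, but $\int_{\delta|x|<|y|<|x|/2}|y|^{-n}\,dy=|S^{n-1}|\ln\frac{1}{2\delta}=O_\delta(1)$. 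Letting $|x|\to\infty$ and then $\delta\to0$ gives $L(x)/\ln|x|\to|S^{n-1}|$, which is all you need (your stronger claim $L(x)=|S^{n-1}|\ln|x|+O(1)$ happens to be true but is not delivered by this decomposition without further work). This is exactly the role played by the parameter $c$ in the paper's scaled version; once you add it, the proof is complete.
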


\begin{proof}
From \eqref{long u}, we have for large $R>0$,
\begin{align*}
\frac{|x|^{n-\alpha}}{\ln |x|}v(x) = {} & \frac{1}{\ln |x|}\int_{B_{R}(0)} \frac{|x|^{n-\alpha}u(y)^p}{|x-y|^{n-\alpha}|y|^{\sigma_2}} \,dy \\
{} & + \frac{(A_0 + \mathrm{o}(1))^p}{\ln |x|}\int_{\mathbb{R}^n\backslash B_{R}(0)} \frac{|x|^{n-\alpha}}{|x-y|^{n-\alpha}|y|^n} \,dy. 
\end{align*}  
Indeed, \eqref{J6} implies that
\begin{equation*}
\frac{1}{\ln |x|}\int_{B_{R}(0)} \frac{|x|^{n-\alpha}u(y)^p}{|x-y|^{n-\alpha}|y|^{\sigma_2}} \,dy = \mathrm{o}(1) \,\text{ as }\, |x| \longrightarrow \infty.
\end{equation*}
Thus, it only remains to show that
\begin{equation}\label{second case term}
\frac{1}{\ln |x|}\int_{\mathbb{R}^n\backslash B_{R}(0)} \frac{|x|^{n-\alpha}}{|x-y|^{n-\alpha}|y|^n} \,dy \longrightarrow |S^{n-1}| \,\text{ as }\, |x| \longrightarrow \infty.
\end{equation}
Indeed, for large $R>0$ and $c \in (0,1/2)$, polar coordinates and a change of variables give us
\begin{align*}
\frac{1}{\ln |x|}\int_{\mathbb{R}^n \backslash B_{R}(0)} \frac{|x|^{n-\alpha}}{|x-y|^{n-\alpha}|y|^n} \,dy = {} & \frac{1}{\ln |x|}\int_{\frac{R}{|x|}}^{c}\int_{S^{n-1}} \frac{1}{r|e-rw|^{n-\alpha}} \,dS_{w} dr \\
{} & + \frac{1}{\ln |x|}\int_{\mathbb{R}^n \backslash B_{c}(0)} \frac{1}{|z|^n |e-z|^{n-\alpha}} \,dz,
\end{align*}
where $e$ is a unit vector in $\mathbb{R}^n$. Clearly, the integral in the second term can be bounded above by a positive constant depending only on $c$, since $n-\alpha < n$ for $z$ near $e$ and $n-\alpha + n > n$ near infinity. Thus, we deduce that
$$  \frac{1}{\ln |x|}\int_{\mathbb{R}^n \backslash B_{c}(0)} \frac{1}{|z|^n |e-z|^{n-\alpha}} \,dz = \mathrm{o}(1) \,\text{ as }\, |x| \longrightarrow \infty.$$
For $r \in (0,c)$, it is also clear that $1-c \leq |e-rw|\leq 1+c$. Therefore, there exists $\theta \in (-1,1)$ such that $|e-rw|= 1 + \theta c$, which leads to  
\begin{align*}
\frac{1}{\ln |x|}\int_{\frac{R}{|x|}}^{c} \int_{S^{n-1}} \frac{1}{r|e-rw|^{n-\alpha}}\,dw dr 
= {} & \frac{|S^{n-1}|}{(1+\theta c)^{n-\alpha}\ln |x|} (\ln c - \ln R + \ln |x|) \\
\longrightarrow {} & \frac{|S^{n-1}|}{(1+\theta c)^{n-\alpha}} \,\text{ as }\, |x| \longrightarrow \infty.
\end{align*}
Hence, by sending $c \longrightarrow 0$, we obtain \eqref{second case term} and this concludes the proof.
\end{proof}

\begin{proposition}\label{fast decay of v 3}
If $p(n-\alpha) + \sigma_2 < n$, then 
$$ A_{2} \doteq A_{0}^{p}\int_{\mathbb{R}^n} \frac{dz}{|z|^{p(n-\alpha)+\sigma_2}|e-z|^{n-\alpha}} < \infty.$$
Moreover,
$$ \lim_{|x|\longrightarrow \infty} |x|^{p(n-\alpha) - (\alpha - \sigma_2)}v(x) = A_{2}.$$
\end{proposition}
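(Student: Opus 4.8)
The plan is to follow the template of Propositions \ref{fast decay of v 1} and \ref{fast decay of v 2}, first verifying the finiteness of $A_2$ and then computing the limit by a rescaling argument. Finiteness of $A_2$ reduces to checking the local and global integrability of the kernel $|z|^{-(p(n-\alpha)+\sigma_2)}|e-z|^{-(n-\alpha)}$: near $z=0$ the integrand behaves like $|z|^{-(p(n-\alpha)+\sigma_2)}$, which is integrable precisely because of the standing hypothesis $p(n-\alpha)+\sigma_2 < n$; near the pole $z = e$ it behaves like $|e-z|^{-(n-\alpha)}$ with $n-\alpha < n$; and near infinity it behaves like $|z|^{-(p(n-\alpha)+\sigma_2+(n-\alpha))}$, which is integrable if and only if $p(n-\alpha)-(\alpha-\sigma_2) > 0$. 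This last inequality is guaranteed by the existence of a positive integrable solution, since otherwise the integrability interval in Theorem \ref{integrability} would be empty. Together with $A_0 < \infty$ from Proposition \ref{fast decay of u}(i), this yields $A_2 < \infty$.

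For the limit, I would use the first integral equation and split the domain at radius $R$, writing
\begin{equation*}
|x|^{p(n-\alpha)-(\alpha-\sigma_2)} v(x) = J_6 + J_7,
\end{equation*}
where $J_6$ is the contribution of $B_{R}(0)$ and $J_7$ that of $\mathbb{R}^n\backslash B_{R}(0)$, and in $J_7$ I replace $u(y)^p$ by $(A_0+\mathrm{o}(1))^p|y|^{-p(n-\alpha)}$ via \eqref{long u}, valid once $R$ is large. The interior term $J_6$ is controlled by the boundedness of $u$ and the local integrability of $|y|^{-\sigma_2}$ (as $\sigma_2 < \alpha < n$); since $|x-y| \geq |x|/2$ for $y \in B_{R}(0)$ and $|x| > 2R$, one gets $J_6 \leq C|x|^{p(n-\alpha)-(\alpha-\sigma_2)-(n-\alpha)} = C|x|^{p(n-\alpha)+\sigma_2 - n}$, which tends to zero precisely by the hypothesis $p(n-\alpha)+\sigma_2 < n$.

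The heart of the argument is $J_7$. Here I rescale by setting $y = |x|z$ and writing $x = |x|e$ with $e = x/|x|$, so that $|x-y| = |x|\,|e-z|$ and $|y| = |x|\,|z|$. A direct computation shows that the power of $|x|$ produced by the change of variables is exactly $-(p(n-\alpha)-(\alpha-\sigma_2))$, cancelling the prefactor and leaving
\begin{equation*}
J_7 = (A_0 + \mathrm{o}(1))^p \int_{\mathbb{R}^n \backslash B_{R/|x|}(0)} \frac{dz}{|z|^{p(n-\alpha)+\sigma_2}|e-z|^{n-\alpha}}.
\end{equation*}
As $|x| \longrightarrow \infty$ the inner radius $R/|x| \longrightarrow 0$, so the domain increases to $\mathbb{R}^n$; since the full integral converges to $A_2/A_0^p$ by the first step, dominated convergence gives $J_7 \longrightarrow A_2$, while $J_6 \longrightarrow 0$. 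Sending $|x|\longrightarrow\infty$ and then $R \longrightarrow \infty$ yields the claim.

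The main technical point, as in the companion propositions, is the careful handling of the double limit in $|x|$ and $R$ together with the $(A_0+\mathrm{o}(1))^p$ factor. I would make this rigorous by sandwiching $u(y)^p$ between $(A_0\pm\varepsilon)^p|y|^{-p(n-\alpha)}$ for $|y|>R$ and then sending $\varepsilon \longrightarrow 0$; the integrability that supplies the dominating function for the rescaled integral is exactly the finiteness of $A_2$ established at the outset.
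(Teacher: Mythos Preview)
Your proof is correct and mirrors the paper's argument: check the three local exponents governing $A_2$, split the integral for $v$ at radius $R$, kill the inner piece via the hypothesis $p(n-\alpha)+\sigma_2<n$, and rescale the outer piece by $y=|x|z$ to recover $A_2/A_0^p$. The only cosmetic differences are that the paper obtains the decay exponent at infinity, $(1+p)(n-\alpha)+\sigma_2>n$, directly from the non-subcritical condition $\frac{n-\sigma_2}{1+p}<n-\alpha$ rather than via the integrability interval of Theorem~\ref{integrability}, and that you mean the \emph{second} integral equation (the one defining $v$), not the first.
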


\begin{proof}
According to the non-subcritical condition, we have that $\frac{n-\sigma_2}{1+p} < n-\alpha$. Therefore, the integrand in $A_2$ decays with the following rates: $(1+p)(n-\alpha) + \sigma_2 > n$ near infinity; $n-\alpha < n$ near $e$; and $p(n-\alpha)+\sigma_2 < n$ near the origin. Thus, we conclude that $A_2 < \infty$.

For large $R>0$, we use \eqref{long u} to write
\begin{align}\label{third case}
|x|^{p(n-\alpha) - (\alpha-\sigma_2)} {} & v(x) = |x|^{p(n-\alpha) + \sigma_2 - n}\int_{B_{R}(0)} \frac{|x|^{n-\alpha}u(y)^p}{|x-y|^{n-\alpha}|y|^{\sigma_2}} \,dy \notag \\
{} & + (A_0 + \mathrm{o}(1))^p \int_{\mathbb{R}^n \backslash B_{R}(0)} \frac{|x|^{(p+1)(n-\alpha)+\sigma_2 - n}}{|x-y|^{n-\alpha}|y|^{p(n-\alpha)+\sigma_2}} \,dy.
\end{align}
Indeed, if $y\in B_{R}(0)$, then
\begin{equation*}
|x|^{p(n-\alpha)+ \sigma_2 - n}\int_{B_{R}(0)} \frac{|x|^{n-\alpha}u(y)^p}{|x-y|^{n-\alpha}|y|^{\sigma_2}} \,dy 
\leq C|x|^{p(n-\alpha) + \sigma_2 - n} \longrightarrow 0
\end{equation*}
as $|x|\longrightarrow \infty$. Likewise, as $|x|\longrightarrow \infty$
\begin{align*}
\int_{\mathbb{R}^n\backslash B_{R}(0)} \frac{|x|^{(p+1)(n-\alpha) + \sigma_2 - n}}{|x-y|^{n-\alpha}|y|^{p(n-\alpha)+\sigma_2}} \,dy 
= {} & \int_{\mathbb{R}^n\backslash B_{R/|x|}(0)} \frac{dz}{|z|^{p(n-\alpha) + \sigma_2}|e-z|^{n-\alpha}} \\
\longrightarrow {} & \frac{A_2}{A_{0}^{p}}.
\end{align*}
Inserting these calculations into \eqref{third case} leads to the desired result.
\end{proof}

\subsection{Characterization of integrable solutions}
\begin{proposition}\label{bounded decay are integrable}
Let $u,v$ be positive solutions of \eqref{whls ie} satisfying \eqref{non-subcritical0}. If $u,v$ are bounded and decay with the fast rates as $|x|\longrightarrow\infty$, then $u,v$ are integrable solutions.
\end{proposition}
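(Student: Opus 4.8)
The plan is to reduce everything to the convergence of two tail integrals and then to a comparison of decay exponents against the critical threshold $n$. Since the norms defining integrable solutions, $\|u\|_{r_0}$ with $r_0 = n/q_0$ and $\|v\|_{s_0}$ with $s_0 = n/p_0$, carry \emph{no} singular weight, and since $u,v$ are bounded, the contributions $\int_{B_R(0)} u^{r_0}\,dx$ and $\int_{B_R(0)} v^{s_0}\,dx$ over any fixed ball are automatically finite. Hence it suffices to show $\int_{B_R(0)^C} u^{r_0}\,dx < \infty$ and $\int_{B_R(0)^C} v^{s_0}\,dx < \infty$ for $R$ large. Substituting the prescribed fast rates and passing to polar coordinates, each tail reduces to a one-dimensional integral of the form $\int_R^\infty t^{\,n-1-\gamma}\,dt$ (possibly with a logarithmic factor), which converges precisely when the decay exponent $\gamma$ exceeds $n$. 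Before starting I would record that $p_0,q_0 > 0$, since the numerator of $p_0$ equals $(\alpha-\sigma_2)+p(\alpha-\sigma_1) > 0$ as $\sigma_1,\sigma_2 < \alpha$ (and symmetrically for $q_0$).

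First I would treat $u$. From $u(x) \simeq |x|^{-(n-\alpha)}$ the tail integrand behaves like $t^{\,n-1-(n-\alpha)r_0}$, so convergence amounts to $(n-\alpha)r_0 > n$, i.e.\ $q_0 < n-\alpha$. This is immediate from the non-subcritical condition \eqref{non-subcritical0}, namely $q_0 + p_0 \leq n-\alpha$ together with $p_0 > 0$. The identical argument with $s_0 = n/p_0$ and the bound $p_0 < n-\alpha$ (which follows from \eqref{non-subcritical0} and $q_0 > 0$) disposes of the first regime $p(n-\alpha)+\sigma_2 > n$, where $v(x)\simeq |x|^{-(n-\alpha)}$; and in the borderline regime $p(n-\alpha)+\sigma_2 = n$, where $v(x) \simeq |x|^{-(n-\alpha)}\ln|x|$, the extra factor $(\ln t)^{s_0}$ is harmless because the power $(n-\alpha)s_0$ already lies strictly above the threshold $n$.

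The remaining regime $p(n-\alpha)+\sigma_2 < n$, where $v(x)\simeq |x|^{-\beta}$ with $\beta \doteq p(n-\alpha)-(\alpha-\sigma_2)$, is the crux of the argument. Integrability of $v^{s_0}$ now requires $\beta\, s_0 > n$, i.e.\ $\beta > p_0$. Here I would invoke the algebraic identity
\begin{equation*}
p_0 = p\,q_0 - (\alpha - \sigma_2),
\end{equation*}
which one verifies by direct substitution of the definitions of $p_0$ and $q_0$ (it is the same relation underlying the slow rates, stemming from the lower bound $v \geq Cu^p|x|^{\alpha-\sigma_2}$ in Proposition \ref{slow decay 1}). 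Granting it, one computes
\begin{equation*}
\beta - p_0 = \big[p(n-\alpha) - (\alpha-\sigma_2)\big] - \big[p\,q_0 - (\alpha-\sigma_2)\big] = p\,(n-\alpha - q_0) > 0,
\end{equation*}
since $q_0 < n-\alpha$ as established above; this gives $\beta > p_0$ and completes the case.

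I expect the one genuine obstacle to be this last regime: one must identify the correct exponent comparison $\beta > p_0$ and justify it through the identity $p_0 = p q_0 - (\alpha-\sigma_2)$ rather than by a brute-force manipulation of fractions. Once that identity is in hand, the positivity of $\beta - p_0$ is transparent, and all the other cases are routine bookkeeping with radial tail integrals. I would therefore devote the writeup mainly to verifying the identity cleanly and to stating the three exponent conditions $q_0 < n-\alpha$, $p_0 < n-\alpha$, and $\beta > p_0$, each of which is a direct consequence of \eqref{non-subcritical0} and the positivity of $p_0, q_0$.
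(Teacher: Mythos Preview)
Your proposal is correct and follows essentially the same approach as the paper: split each $L^{r_0}$, $L^{s_0}$ norm into a bounded ball plus a tail, reduce the tail to a radial power integral, and verify the three exponent inequalities $q_0 < n-\alpha$, $p_0 < n-\alpha$, and $\beta > p_0$ as consequences of \eqref{non-subcritical0} together with $p_0,q_0 > 0$. The paper's treatment of the third regime is identical to yours in substance---it writes the comparison as $pq_0 - \alpha + \sigma_2 < p(n-\alpha) - (\alpha-\sigma_2)$, which is precisely $p_0 < \beta$ via the identity $p_0 = pq_0 - (\alpha-\sigma_2)$ that you state explicitly.
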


\begin{proof}
Suppose $u,v$ are bounded and decay with the fast rates. From \eqref{non-subcritical0}, it is clear that $(n-\alpha)r_0 > n$. Thus,
\begin{equation*}
\int_{\mathbb{R}^n} u(x)^{r_0} \,dx \leq C + \int_{\mathbb{R}^n\backslash B_{R}(0)} u(x)^{r_0}\,dx \leq C_{1} + C_{2}\int_{R}^{\infty} t^{n-(n-\alpha)r_0} \,\frac{dt}{t} < \infty.
\end{equation*}
Similarly, $\int_{\mathbb{R}^n} v(x)^{s_0}\,dx < \infty$ if $v$ decays with rate $|x|^{-(n-\alpha)}$. If $v$ decays with the rate $|x|^{-(n-\alpha)}\ln |x|$, then we can find a suitably large $R>0$ and small $\varepsilon > 0$ for which $(\ln |x|)^{s_0} \leq |x|^{\varepsilon}$ for $|x| > R$. Then, we also get $n-(n-\alpha)s_0 + \varepsilon < 0$ provided $\varepsilon$ is sufficiently small and this implies
\begin{equation*}
\int_{\mathbb{R}^n} v(x)^{s_0} \,dx \leq C_{1} + C_{2} \int_{R}^{\infty} t^{n-(n-\alpha)s_0 + \varepsilon} \,\frac{dt}{t} < \infty.
\end{equation*}
If $v$ decays with the rate $|x|^{-(p(n-\alpha)-(\alpha-\sigma_2))}$, then \eqref{non-subcritical0} implies that $q_0 < n-\alpha$, which further yields $p q_0 - \alpha + \sigma_2 < p(n-\alpha) - (\alpha - \sigma_2)$. From this we deduce that $n - (p(n-\alpha) - (\alpha-\sigma_2))s_0 < 0$. Therefore,
\begin{equation*}
\int_{\mathbb{R}^n} v(x)^{s_0} \,dx \leq C_{1} + C_{2} \int_{R}^{\infty} t^{n - (p(n-\alpha) - (\alpha-\sigma_2))s_0} \,\frac{dt}{t} < \infty.
\end{equation*}
In any case, we conclude that $(u,v) \in L^{r_0}(\mathbb{R}^n)\times L^{s_0}(\mathbb{R}^n)$.
\end{proof}

\begin{proof}[Proof of Theorem \ref{integrable theorem}]
Propositions \ref{fast decay of u}--\ref{bounded decay are integrable} show $u,v$ are positive integrable solutions if and only if they are bounded and decay with the fast rates as $|x|\longrightarrow\infty$. Lastly, it remains to show that \eqref{whls ie} does not admit any positive integrable solution in the supercritical case. To prove this, assume $u$ and $v$ are positive integrable solutions. Then, we can apply similar arguments found in the proof of Proposition \ref{fast decay of u} to show that
$$ \int_{\mathbb{R}^n} \frac{u(x)^{p+1}}{|x|^{\sigma_2}} \,dx = \int_{\mathbb{R}^n} \frac{v(x)^{q+1}}{|x|^{\sigma_1}} \,dx < \infty.$$
Then, as in the proof of Theorem \ref{Liouville}, we can deduce the same Pohozaev type identity to arrive at
\begin{equation*}
\Bigg\lbrace \frac{n-\sigma_1}{1+q} + \frac{n-\sigma_2}{1+p} - (n-\alpha) \Bigg\rbrace \int_{\mathbb{R}^n} \frac{v(x)^{q+1}}{|x|^{\sigma_1}}\,dx 
= 0,
\end{equation*}
but this contradicts the supercritical condition. This completes the proof of the theorem.
\end{proof}

\section{Asymptotic properties of non-integrable solutions}\label{non-integrable section}
In this section, we assume $u,v$ are bounded positive solutions of system \eqref{whls ie}.
\begin{proposition}\label{nonint 1}
Let $\theta_1 < q_0$ and $\theta_2 < p_0$. Then there does not exist a positive constant $c$ such that either
$$ u(x) \geq c(1+|x|)^{-\theta_1} \,\text{ or }\, v(x) \geq  c(1+|x|)^{-\theta_2}.$$
\end{proposition}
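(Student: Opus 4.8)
The plan is to argue by contradiction via a bootstrap on lower bounds, exploiting the hypothesis $pq>1$ to turn the slow-rate exponents into a repelling fixed point. Suppose one of the two stated lower bounds holds. I first observe that either case reduces to a lower bound on $u$ alone: if $v(x)\geq c(1+|x|)^{-\theta_2}$ with $\theta_2<p_0$, then inserting this into the first equation of \eqref{whls ie} and integrating only over the ball $B_{|x|/2}(x)$ (where $|y|\simeq|x|$), exactly as in the proof of Proposition \ref{slow decay 2}, produces $u(x)\geq c'(1+|x|)^{-(q\theta_2+\sigma_1-\alpha)}$; a direct computation shows $q\theta_2+\sigma_1-\alpha<q_0$, since one checks $q p_0 + \sigma_1 - \alpha = q_0$. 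Hence in all cases I may assume $u(x)\geq c(1+|x|)^{-\theta}$ for some $\theta<q_0$.

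The key step is an improvement lemma: if $u(x)\geq c(1+|x|)^{-\theta}$, then localizing the second equation of \eqref{whls ie} to $B_{|x|/2}(x)$ gives
$$ v(x)\geq c\,|x|^{-p\theta-\sigma_2}\int_{B_{|x|/2}(x)}\frac{dy}{|x-y|^{n-\alpha}}\geq c'\,(1+|x|)^{-(p\theta+\sigma_2-\alpha)}, $$
where the gain of $+\alpha$ comes from $\int_{B_{|x|/2}(0)}|z|^{\alpha-n}\,dz\simeq|x|^{\alpha}$ (convergent since $\alpha>0$). Feeding the resulting bound on $v$ back into the first equation the same way yields
$$ u(x)\geq c''\,(1+|x|)^{-T(\theta)},\quad T(\theta)\doteq pq\,\theta+q\sigma_2-q\alpha+\sigma_1-\alpha. $$
A short calculation identifies $q_0$ as the unique fixed point of $T$ and, more importantly, shows the linear identity $T(\theta)-q_0=pq(\theta-q_0)$.

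From here the contradiction is immediate. Starting from $\theta<q_0$ and iterating the full cycle $k$ times gives a lower bound on $u$ with exponent $T^{(k)}(\theta)$ satisfying $T^{(k)}(\theta)-q_0=(pq)^k(\theta-q_0)$. Since $pq>1$ and $\theta-q_0<0$, we have $T^{(k)}(\theta)\longrightarrow-\infty$; in particular $T^{(k)}(\theta)<0$ for some finite $k$, so that $u(x)\geq c\,(1+|x|)^{|T^{(k)}(\theta)|}\longrightarrow\infty$ as $|x|\longrightarrow\infty$, contradicting the assumed boundedness of $u$. The case of the $v$-hypothesis is symmetric.

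I expect the main obstacle to be purely bookkeeping: verifying that each localized integral estimate holds uniformly regardless of the sign of the exponent (so the iteration does not stall when the exponent crosses zero) and that the multiplicative constants, though degrading, remain strictly positive over the finitely many cycles needed to drive the exponent negative. The conceptual heart is the algebraic identity $T(\theta)-q_0=pq(\theta-q_0)$, which makes $q_0$ a repelling fixed point and converts the single hypothesis $pq>1$ directly into the divergence of the iterates.
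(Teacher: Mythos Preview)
Your proposal is correct and follows essentially the same iterative bootstrap as the paper: localize each integral equation to $B_{|x|/2}(x)$, iterate to obtain a linear recursion on the decay exponents, and use $pq>1$ to drive the exponents to $-\infty$. The paper's explicit formula $b_k=(pq)^k(b_0-q_0)+q_0$ is exactly your identity $T^{(k)}(\theta)-q_0=(pq)^k(\theta-q_0)$; the only cosmetic difference is that, once the exponent is negative, the paper integrates over $\mathbb{R}^n\setminus B_R(0)$ to force $v(x)=\infty$ at a point, while you appeal to the standing boundedness hypothesis of the section.
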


\begin{proof}
Assume that there exists such a $c>0$ in which,
$$ u(x) \geq c(1+|x|)^{-\theta_1} \,\text{ where }\, \theta_1 < q_0.$$
Then there holds for large $x$,
$$v(x) \geq \int_{B_{|x|/2}(x)} \frac{u(y)^p}{|x-y|^{n-\alpha}|y|^{\sigma_2}} \,dy \geq  c(1+|x|)^{-a_1},$$
where $b_0 = \theta_1$ and $a_1 = pb_0 - \alpha + \sigma_2$. Thus, inserting this into the first integral equation yields
$$ u(x) \geq \int_{B_{|x|/2}(x)} \frac{v(y)^q}{|x-y|^{n-\alpha}|y|^{\sigma_1}} \,dy \geq c(1+|x|)^{-b_1},$$
where $b_1 = q a_1 - \alpha + \sigma_1$. By inductively repeating this argument, we arrive at
$$ v(x) \geq c(1+|x|)^{-a_j} \,\text{ and }\, u(x)\geq c(1+|x|)^{-b_j} , $$
where 
$$ a_{j+1} = pb_{j} - \alpha + \sigma_2 \,\text{ and }\, b_{j} = qa_j - \alpha + \sigma_1 \,\text{ for }\, j = 1,2,3,\ldots.$$
A simple calculation yields
\begin{align*}
b_{k} = {} & q a_k - \alpha + \sigma_1 = q(pb_{k-1} - \alpha + \sigma_2) - \alpha + \sigma_1 \\
= {} & pq b_{k-1} - (\alpha(1+q) - (\sigma_1 + \sigma_2 q)) \\
= {} & (pq)^2 b_{k-2} - (\alpha(1+q) - (\sigma_1 + \sigma_2 q))(1+pq) \\
\vdots \\
= {} & (pq)^k b_{0} - (\alpha(1+q) - (\sigma_1 + \sigma_2 q))(1+pq + (pq)^2 + \ldots + (pq)^{k-1}) \\
= {} & (pq)^k b_{0} - (\alpha(1+q) - (\sigma_1 + \sigma_2 q))\frac{(pq)^k - 1}{pq - 1} = (pq)^k (b_{0} - q_0) + q_0.
\end{align*}
Since $pq > 1$ and $b_0 = \theta_1 < q_0$, we can find a sufficiently large $k_0$ such that $b_{k_0} < 0$, but then this implies that for a suitable choice of $R>0$,
\begin{align*}
v(x) \geq c\int_{\mathbb{R}^n \backslash B_{R}(0)} \frac{u(y)^p}{|x-y|^{n-\alpha}|y|^{\sigma_2}}\,dy \geq c\int_{\mathbb{R}^n \backslash B_{R}(0)} \frac{|y|^{-pb_{k_0}}}{|x-y|^{n-\alpha}|y|^{\sigma_2}}\,dy \\
\geq c\int_{R}^{\infty} t^{\alpha-\sigma_2 - p b_{k_0}} \,\frac{dt}{t} = \infty. 
\end{align*}
Hence $v(x) = \infty$, which is impossible. Similarly, if there exists a $c>0$ such that 
$$v(x) \geq  c(1+|x|)^{-\theta_2} \,\text{ where }\, \theta_2 < p_0,$$ 
then we can apply the same iteration argument to conclude $u(x) = \infty$ for large $x$ and this completes the proof.
\end{proof}

\begin{proposition}\label{nonint 2} There hold the following.
\begin{enumerate}[(i)]
\item Let $\theta_3 > q_0$ and $\theta_4 > p_0$. If $u,v$ are not integrable solutions, then there does not exist a positive constant $C$ such that either
$$ u(x) \leq C(1+|x|)^{-\theta_3} \,\text{ or }\, v(x) \leq C(1+|x|)^{-\theta_4}. $$

\item If $u,v$ are not integrable solutions but are decaying solutions, i.e.
$$ u(x) \simeq |x|^{-\theta_1} \,\text{ and }\, v(x) \simeq |x|^{-\theta_2}$$
for some $\theta_{1},\theta_{2} > 0$, then they necessarily have the slow rates $\theta_1 = q_0$ and $\theta_2 = p_0$. 
\end{enumerate}
\end{proposition}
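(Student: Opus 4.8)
The plan is to treat part (i) as a self-improvement (bootstrap) statement and then to obtain part (ii) by sandwiching between part (i) and Proposition \ref{nonint 1}. The essential observation for part (i) is that an upper bound decaying strictly faster than the slow rate on \emph{either} component already forces $(u,v)$ to be an integrable solution, which directly contradicts the hypothesis of non-integrability. So the whole content of part (i) is the implication ``faster-than-slow upper bound $\Rightarrow$ integrable''.

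For the first assertion, suppose first that $u(x) \le C(1+|x|)^{-\theta_3}$ with $\theta_3 > q_0$. Since $u$ is bounded, the only possible obstruction to integrability is at infinity, and $\int_{|x|>1} |x|^{-\theta_3 r_0}\,dx < \infty$ precisely because $\theta_3 r_0 > q_0 r_0 = n$; hence $u \in L^{r_0}(\mathbb{R}^n)$. I would then insert this into the second equation of \eqref{whls ie} and apply the weighted HLS estimate exactly as in the proof of Theorem \ref{integrability}, namely $\|v\|_{s_0} \le C\|u\|_{r_0}^p$. The matching of Lebesgue exponents rests on the algebraic identity $p q_0 = p_0 + (\alpha - \sigma_2)$ (equivalently $p q_0 - (\alpha - \sigma_2) = p_0$), which makes the source exponent for $u$ equal to $r_0$. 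The applicability of the weighted HLS inequality reduces to the conditions $p_0 < n-\alpha$ and $q_0 < n-\alpha$; these hold automatically under our standing assumptions, since the mere existence of a positive solution of \eqref{whls ie} forces $\max\{p_0,q_0\} < n-\alpha$ by Theorem A. Thus $v \in L^{s_0}(\mathbb{R}^n)$, so that $(u,v) \in L^{r_0}\times L^{s_0}$ is an integrable solution, a contradiction. The case $v(x) \le C(1+|x|)^{-\theta_4}$ with $\theta_4 > p_0$ is symmetric: one gets $v \in L^{s_0}$ directly, and then $u \in L^{r_0}$ from the first equation via $\|u\|_{r_0}\le C\|v\|_{s_0}^q$, using the companion identity $q p_0 - (\alpha-\sigma_1) = q_0$.

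For part (ii), assume $u(x) \simeq |x|^{-\theta_1}$ and $v(x) \simeq |x|^{-\theta_2}$ with $u,v$ not integrable. The upper estimate $u(x) \le C|x|^{-\theta_1}$ together with part (i) forbids $\theta_1 > q_0$, while the lower estimate $u(x)\ge c|x|^{-\theta_1}$ together with Proposition \ref{nonint 1} forbids $\theta_1 < q_0$; hence $\theta_1 = q_0$. The identical argument applied to $v$ (using $\theta_2 \le p_0$ from part (i) and $\theta_2 \ge p_0$ from Proposition \ref{nonint 1}) gives $\theta_2 = p_0$.

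The proof carries essentially no new analytic difficulty beyond what is already assembled in the paper; the two points requiring care are (a) confirming that the weighted HLS inequality is applicable, which is exactly where Theorem A is invoked to guarantee $p_0,q_0 < n-\alpha$, and (b) the bookkeeping of exponents through the identities $p q_0 - (\alpha-\sigma_2) = p_0$ and $q p_0 - (\alpha-\sigma_1)=q_0$. An alternative, more self-contained route to part (i) would replace the single HLS step by an upper-bound version of the iteration in Proposition \ref{nonint 1}: from $u \le C(1+|x|)^{-b}$ one derives $v \le C(1+|x|)^{-\min\{n-\alpha,\,pb-\alpha+\sigma_2\}}$ by splitting $\mathbb{R}^n$ into the regions near the origin, near $x$, and the far field, and iterating drives both rates up to the fast rate $n-\alpha$, after which Proposition \ref{bounded decay are integrable} yields integrability. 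I would expect the HLS route to be the cleaner of the two, and I would present it as the main argument.
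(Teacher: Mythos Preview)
Your argument is correct and follows the same outline as the paper: for part (i) you assume a faster-than-slow upper bound on one component, show that component lies in its natural Lebesgue space ($u\in L^{r_0}$, resp.\ $v\in L^{s_0}$), and contradict non-integrability; part (ii) is then the sandwich between part (i) and Proposition \ref{nonint 1}, exactly as in the paper.

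The one genuine difference is that you carry part (i) one step further than the paper does. The paper, after showing $u\in L^{r_0}(\mathbb{R}^n)$ from $u(x)\le C(1+|x|)^{-\theta_3}$, simply declares that this ``contradicts the assumption that $u,v$ are not integrable solutions.'' Strictly speaking that is not yet a contradiction, since integrability requires \emph{both} $u\in L^{r_0}$ and $v\in L^{s_0}$. You close this gap by applying the weighted HLS estimate to the second equation of \eqref{whls ie} (with the exponent identity $pq_0-(\alpha-\sigma_2)=p_0$) to obtain $\|v\|_{s_0}\le C\|u\|_{r_0}^p<\infty$, and symmetrically in the other case. Your invocation of Theorem~A to guarantee $p_0,q_0<n-\alpha$, hence the admissibility of the HLS exponents, is exactly the right justification. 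So your version is the same argument as the paper's, but made complete; the paper presumably regards this transfer step as implicit from the machinery of Theorem~\ref{integrability}.
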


\begin{proof}
(i) On the contrary, assume there exists a $C>0$ such that $u(x) \leq C(1+|x|)^{-\theta_3}$. Then $n-r_{0}\theta_3 < 0$ and we calculate that
\begin{align*}
\int_{\mathbb{R}^n} u(x)^{r_0} \,dx = {} & \int_{B_{R}(0)} u(x)^{r_0} \,dx + \int_{\mathbb{R}^n \backslash B_{R}(0)} u(x)^{r_0} \,dx \\
\leq {} & C_{1} + C_{2}\int_{R}^{\infty} t^{n-r_{0}\theta_3} \,\frac{dt}{t} < \infty,
\end{align*}
which contradicts the assumption that $u,v$ are not integrable solutions. Likewise, if there exists a $C>0$ such that $v(x) \leq C(1+|x|)^{-\theta_4}$, a similar argument shows that $v \in L^{s_0}(\mathbb{R}^n)$, which is a contradiction.

(ii) Now suppose that $u,v$ are non-integrable solutions but are decaying solutions. Then part (i) and Proposition \ref{nonint 1} clearly imply that $u,v$ decay with the slow rates as $|x|\longrightarrow \infty$. 
\end{proof}

\begin{proof}[Proof of Theorem \ref{slow theorem}]
Part (i) of the theorem follows immediately from Proposition \ref{nonint 1} and parts (ii) and (iii) follow from Proposition \ref{nonint 2}.
\end{proof}
\small
\noindent{\bf Acknowledgment:} 
This work was completed during a visiting appointment at the University of Oklahoma, and the author would like to express his sincere appreciation to the university and the Department of Mathematics, especially Professors R. Landes and M. Zhu, for their hospitality. The author would also like to thank the anonymous referee for pointing out typographical errors and providing valuable suggestions on improving this article.


\begin{thebibliography}{10}
\small 

\bibitem{AYZ14}
F.~Arthur, X.~Yan, and M.~Zhao.
\newblock A {L}iouville-type theorem for higher order elliptic systems. 
\newblock {\em Discrete Contin. Dyn. Syst.}, 34(9):3317--3339, 2014.

\bibitem{BadialeTarantello02}
M.~Badiale and G.~Tarantello.
\newblock A {H}ardy--{S}obolev inequality with applications to a nonlinear elliptic equation arising in astrophysics. 
\newblock {\em Arch. Ration. Mech. Anal.}, 163:259--293, 2002.

\bibitem{CKN84}
L.~Caffarelli, R.~Kohn, and L.~Nirenberg.
\newblock First order interpolation inequalities with weights.
\newblock {\em Compos. Math.}, 53(3):259--275, 1984.

\bibitem{Caristi2008}
G.~Caristi, L.~{D'}Ambrosio, and E.~Mitidieri.
\newblock Representation formulae for solutions to some classes of higher order systems and related {L}iouville theorems.
\newblock {\em Milan J. Math.}, 76(1):27--67, 2008.

\bibitem{CW01}
F.~Catrina and Z.~Wang.
\newblock On the {C}affarelli-{K}ohn-{N}irenberg inequalities: sharp constants, existence (and nonexistence), and symmetry of extremal functions.
\newblock {\em Comm. Pure Appl. Math.}, 54(2):229--258, 2001.

\bibitem{CL05}
W.~Chen and C.~Li.
\newblock Regularity of solutions for a system of integral equations.
\newblock {\em Commun. Pure Appl. Anal.}, 4:1--8, 2005.

\bibitem{CL09}
W.~Chen and C.~Li.
\newblock An integral system and the {L}ane-{E}mden conjecture.
\newblock {\em Discrete Contin. Dyn. Syst.}, 4(24):1167--1184, 2009.

\bibitem{CL13}
W.~Chen and C.~Li.
\newblock Super polyharmonic property of solutions for {PDE} systems and its applications.
\newblock {\em Commun. Pure Appl. Anal.}, 12(6):2497--2514, 2013.

\bibitem{CLO05}
W.~Chen, C.~Li, and B.~Ou.
\newblock Classification of solutions for a system of integral equations.
\newblock {\em Comm. in Partial Differential Equations}, 30:59--65, 2005.

\bibitem{CLO05a}
W.~Chen, C.~Li, and B.~Ou.
\newblock Qualitative properties of solutions for an integral equation.
\newblock {\em Discrete Contin. Dyn. Syst.}, 12:347--354, 2005.

\bibitem{CLO06}
W.~Chen, C.~Li, and B.~Ou.
\newblock Classification of solutions for an integral equation.
\newblock {\em Comm. Pure Appl. Math.}, 59:330--343, 2006.

\bibitem{ChouChu93}
K. S.~Chou and C. W.~Chu.
\newblock On the best constant for a weighted {S}obolev--Hardy inequality.
\newblock {\em J. Lond. Math. Soc.}, 2:137--151, 1993.

\bibitem{DAmbrosioMitidieri14}
L.~{D'}Ambrosio and E.~Mitidieri.
\newblock {H}ardy-{L}ittlewood-{S}obolev systems and related Liouville theorems.
\newblock {\em Discrete Contin. Dyn. Syst. Ser. S}, 7(4):653--671, 2014.

\bibitem{Hang07}
F.~Hang.
\newblock On the integral systems related to {H}ardy-{L}ittlewood-{S}obolev inequality.
\newblock {\em Math. Res. Lett.}, 14(3):373--383, 2007.

\bibitem{Fazly14}
M.~Fazly.
\newblock Liouville theorems for the polyharmonic {H}\'{e}non--{L}ane--{E}mden system.
\newblock {\em Methods Appl. Anal.}, 21(2):265--282, 2014.

\bibitem{FG14}
M.~Fazly and N.~Ghoussoub.
\newblock On the {H}\'{e}non-Lane-Emden conjecture.
\newblock {\em Discrete Contin. Dyn. Syst.}, 34(6):2513--2533, 2014.

\bibitem{GS81apriori}
B.~Gidas and J.~Spruck.
\newblock A priori bounds for positive solutions of nonlinear elliptic equations.
\newblock {\em Comm. Partial Differential Equations}, 6(8):883--901, 1981.

\bibitem{JL05}
C.~Jin and C.~Li.
\newblock Qualitative analysis of some systems of integral equations.
\newblock {\em Calc. Var. Partial Differential Equations}, 26:447--457, 2005.

\bibitem{Lei13}
Y.~Lei.
\newblock Asymptotic properties of positive solutions of the {H}ardy-{S}obolev type equations.
\newblock {\em J. Differential Equations}, 254:1774--1799, 2013.

\bibitem{LL13a}
Y.~Lei and C.~Li.
\newblock Decay properties of the {H}ardy-{L}ittlewood-{S}obolev systems of the {L}ane-{E}mden type, preprint,
\newblock {\em http://arXiv:1302.5567, 2013.}

\bibitem{LL13}
Y.~Lei and C.~Li.
\newblock Sharp criteria of {L}iouville type for some nonlinear systems, preprint, {\em http://arXiv:1301.6235, 2013.}

\bibitem{LLM12}
Y.~Lei, C.~Li, and C.~Ma.
\newblock Asymptotic radial symmetry and growth estimates of positive solutions to weighted {H}ardy--{L}ittlewood--{S}obolev system of integral equations.
\newblock {\em Calc. Var. Partial Differential Equations}, 45(1-2):43--61, 2012.

\bibitem{LeiMa11}
Y.~Lei and C.~Ma.
\newblock Asymptotic behavior for solutions of some integral equations.
\newblock {\em Commun. Pure Appl. Anal.}, 10:193--207, 2011.

\bibitem{Li13}
C.~Li.
\newblock {A degree theory approach for the shooting method, preprint, \,~\,~\,~\,~\,~\,~\,~\,~\,}
\newblock {\em http://arXiv:1301.6232, 2013.}

\bibitem{LiLim07}
C.~Li and J.~Lim.
\newblock The singularity analysis of solutions to some integral equations.
\newblock {\em Commun. Pure Appl. Anal.}, 6(2):453--464, 2007.

\bibitem{YLi92}
Y.~Li.
\newblock Asymptotic behavior of positive solutions of equation {$\Delta u + K(x)u^p = 0$} in $\mathbb{R}^n$.
\newblock {\em J. Differential Equations}, 95:304--330, 1992.

\bibitem{LiNi88}
Y.~Li and W. M.~Ni.
\newblock On conformal scalar curvature equations in $\mathbb{R}^n$.
\newblock {\em Duke Math. J.}, 57:895--924, 1988.

\bibitem{YYLi04}
Y.~Y. Li.
\newblock Remark on some conformally invariant integral equations: the method of moving spheres.
\newblock {\em J. Eur. Math. Soc.}, 6(2):153--180, 2004.


\bibitem{Lieb83}
E.~Lieb.
\newblock Sharp constants in the {Hardy-Littlewood-Sobolev} and related inequalities.
\newblock {\em Ann. of Math.}, 118:349--374, 1983.

\bibitem{LGZ06a}
J.~Liu, Y.~Guo, and Y.~Zhang.
\newblock Existence of positive entire solutions for polyharmonic equations and systems.
\newblock {\em J. Partial Differ. Equ.}, 19(3):256--270, 2006.

\bibitem{LGZ06}
J.~Liu, Y.~Guo, and Y.~Zhang.
\newblock Liouville-type theorems for polyharmonic systems in {$R^N$}.
\newblock {\em J. Differential Equations}, 225(2):685--709, 2006.

\bibitem{LuZhu11}
G.~Lu and J.~Zhu.
\newblock Symmetry and regularity of extremals of an integral equation related to the {H}ardy--{S}obolev inequality.
\newblock {\em Calc. Var. Partial Differential Equations}, 42(3-4):563--577, 2011.

\bibitem{Mitidieri96}
E.~Mitidieri.
\newblock Nonexistence of positive solutions of semilinear elliptic systems in ${R}^{N}$.
\newblock {\em Differ. Integral Equations}, 9:465--480, 1996.

\bibitem{Phan12}
Q.~H. Phan.
\newblock Liouville-type theorems and bounds of solutions for {H}ardy-{H}\'{e}non systems.
\newblock {\em Adv. Differential Equations}, 17(7-8):605--634, 2012.

\bibitem{PQS07}
P.~Pol{\'a}{\v{c}}ik, P.~Quittner, and P.~Souplet.
\newblock Singularity and decay estimates in superlinear problems via {L}iouville-type theorems, {I}: {E}lliptic equations and systems.
\newblock {\em Duke Math. J.}, 139(3):555--579, 2007.

\bibitem{SZ96}
J.~Serrin and H.~Zou.
\newblock Non-existence of positive solutions of {L}ane-{E}mden systems.
\newblock {\em Differ. Integral Equations}, 9(4):635--653, 1996.

\bibitem{Souplet09}
P.~Souplet.
\newblock The proof of the {L}ane--{E}mden conjecture in four space dimensions.
\newblock {\em Adv. Math.}, 221(5):1409--1427, 2009.

\bibitem{SW58}
E.~B. Stein and G.~Weiss.
\newblock Fractional integrals on $n$-dimensional {E}uclidean space.
\newblock {\em J. Math. Mech.}, 7(4):503--514, 1958.


\bibitem{Villavert:14b}
J.~Villavert.
\newblock {Shooting with degree theory: Analysis of some weighted poly-harmonic systems}.
\newblock {\em J. Differential Equations}, 257(4):1148--1167, 2014. 

\bibitem{Villavert:14c}
J.~Villavert.
\newblock Sharp existence criteria for positive solutions of {H}ardy--{S}obolev type systems. 
\newblock {\em Commun. Pure Appl. Anal.}, 14(2):493--515, 2015.

\bibitem{WeiXu99}
J.~Wei and X.~Xu.
\newblock Classification of solutions of higher order conformally invariant equations.
\newblock {\em Math. Ann.}, 313:207--228, 1999.

\bibitem{ZhaoLei2012}
Y.~Zhao and Y.~Lei.
\newblock Asymptotic behavior of positive solutions of a nonlinear integral system.
\newblock {\em Nonlinear Anal.}, 75(4):1989--1999, 2012.

\end{thebibliography}

\end{document}